\newcommand{\nc}{\newcommand}
\nc{\fg}{\mathfrak{f} } \nc{\vg}{\mathfrak{v} } \nc{\wg}{\mathfrak{w} }
\nc{\zg}{\mathfrak{z} } \nc{\ngo}{\mathfrak{n} } \nc{\kg}{\mathfrak{k} }
\nc{\mg}{\mathfrak{m} } \nc{\bg}{\mathfrak{b} } \nc{\ggo}{\mathfrak{g} }
\nc{\ggob}{\overline{\mathfrak{g}} } \nc{\sog}{\mathfrak{so} }
\nc{\sug}{\mathfrak{su} } \nc{\spg}{\mathfrak{sp} } \nc{\slg}{\mathfrak{sl} }
\nc{\glg}{\mathfrak{gl} } \nc{\cg}{\mathfrak{c} } \nc{\rg}{\mathfrak{r} }
\nc{\hg}{\mathfrak{h} } \nc{\tg}{\mathfrak{t} } \nc{\ug}{\mathfrak{u} }
\nc{\dg}{\mathfrak{d} } \nc{\ag}{\mathfrak{a} } \nc{\pg}{\mathfrak{p} }
\nc{\sg}{\mathfrak{s} } \nc{\affg}{\mathfrak{aff} } \nc{\qg}{\mathfrak{q} }
\nc{\pca}{\mathcal{P}} \nc{\nca}{\mathcal{N}} \nc{\lca}{\mathcal{L}}
\nc{\oca}{\mathcal{O}} \nc{\mca}{\mathcal{M}} \nc{\tca}{\mathcal{T}}
\nc{\aca}{\mathcal{A}} \nc{\cca}{\mathcal{C}} \nc{\gca}{\mathcal{G}}
\nc{\sca}{\mathcal{S}} \nc{\hca}{\mathcal{H}} \nc{\bca}{\mathcal{B}}
\nc{\dca}{\mathcal{D}} \nc{\val}{\operatorname{val}}
\nc{\vp}{\varphi} \nc{\ddt}{\frac{d}{dt}} \nc{\dds}{\frac{d}{ds}}
\nc{\dpar}{\frac{\partial}{\partial t}} \nc{\im}{\mathrm{i}}
\nc{\SO}{\mathrm{SO}} \nc{\Spe}{\mathrm{Sp}} \nc{\Sl}{\mathrm{SL}}
\nc{\SU}{\mathrm{SU}} \nc{\Or}{\mathrm{O}} \nc{\U}{\mathrm{U}} \nc{\Gl}{\mathrm{GL}}
\nc{\Se}{\mathrm{S}} \nc{\Cl}{\mathrm{Cl}} \nc{\Spein}{\mathrm{Spin}}
\nc{\Pin}{\mathrm{Pin}} \nc{\G}{\mathrm{GL}_n(\RR)} \nc{\g}{\mathfrak{gl}_n(\RR)}
\nc{\RR}{{\Bbb R}} \nc{\HH}{{\Bbb H}} \nc{\CC}{{\Bbb C}} \nc{\ZZ}{{\Bbb Z}}
\nc{\FF}{{\Bbb F}} \nc{\NN}{{\Bbb N}} \nc{\QQ}{{\Bbb Q}} \nc{\PP}{{\Bbb P}} \nc{\OO}{{\Bbb O}}
\nc{\vs}{\vspace{.2cm}} \nc{\vsp}{\vspace{1cm}} \nc{\ip}{\langle\cdot,\cdot\rangle}
\nc{\ipp}{(\cdot,\cdot)} \nc{\la}{\langle} \nc{\ra}{\rangle} \nc{\unm}{\frac{1}{2}}
\nc{\unc}{\frac{1}{4}} \nc{\und}{\frac{1}{16}} \nc{\no}{\vs\noindent}
\nc{\lam}{\Lambda^2(\RR^n)^*\otimes\RR^n} \nc{\tangz}{{\rm T}^{\rm Zar}}
\nc{\nor}{{\sf n}}  \nc{\mum}{/\!\!/} \nc{\kir}{/\!\!/\!\!/}
\nc{\Ri}{\tfrac{4\Ric_{\mu}}{||\mu||^2}} \nc{\ds}{\displaystyle}
\nc{\ben}{\begin{enumerate}} \nc{\een}{\end{enumerate}} \nc{\f}{\frac}
\nc{\lb}{[\cdot,\cdot]} \nc{\isn}{\tfrac{1}{||v||^2}}
\nc{\gkp}{(\ggo=\kg\oplus\pg,\ip)} \nc{\ukh}{(\ug=\kg\oplus\hg,\ip)}
\nc{\tgkp}{(\tilde{\ggo}=\kg\oplus\pg,\ip)}
\nc{\wt}{\widetilde} \nc{\mm}{M}
\nc{\iop}{\mathtt{i}} \nc{\jop}{\mathtt{j}}
\nc{\Hess}{\operatorname{Hess}} \nc{\ad}{\operatorname{ad}}
\nc{\Ad}{\operatorname{Ad}} \nc{\rank}{\operatorname{rank}}
\nc{\Irr}{\operatorname{Irr}} \nc{\End}{\operatorname{End}}
\nc{\Aut}{\operatorname{Aut}} \nc{\Inn}{\operatorname{Inn}}
\nc{\Der}{\operatorname{Der}} \nc{\Ker}{\operatorname{Ker}}
\nc{\Iso}{\operatorname{Iso}} \nc{\Diff}{\operatorname{Diff}}
\nc{\Lie}{\operatorname{Lie}} \nc{\tr}{\operatorname{tr}} \nc{\dif}{\operatorname{d}}
\nc{\sen}{\operatorname{sen}} \nc{\modu}{\operatorname{mod}}
\nc{\CRic}{\operatorname{PP}} \nc{\Cric}{\operatorname{P}} \nc{\Ricci}{\operatorname{Ric}}
\nc{\sym}{\operatorname{sym}} \nc{\herm}{\operatorname{herm}} \nc{\symac}{\operatorname{sym^{ac}}}
\nc{\symc}{\operatorname{sym^{c}}} \nc{\scalar}{\operatorname{scal}}
\nc{\grad}{\operatorname{grad}} \nc{\ricci}{\operatorname{Rc}}
\nc{\Nor}{\operatorname{Norm}}  \nc{\ricc}{\operatorname{Rc^{c}}}
\nc{\Ricc}{\operatorname{Ric^{c}}} \nc{\ricac}{\operatorname{Rc^{ac}}}
\nc{\Ricac}{\operatorname{Ric^{ac}}} \nc{\Riem}{\operatorname{R}}
\nc{\riccig}{\operatorname{ric^{\gamma}}} \nc{\Rin}{\operatorname{M}}
\nc{\Le}{\operatorname{L}} \nc{\tang}{\operatorname{T}}
\nc{\level}{\operatorname{level}} \nc{\rad}{\operatorname{r}}
\nc{\abel}{\operatorname{ab}} \nc{\CH}{\operatorname{CH}}
\nc{\mcc}{\operatorname{mcc}} \nc{\Adj}{\operatorname{Adj}}
\nc{\Order}{\operatorname{O}}  \nc{\inj}{\operatorname{inj}} \nc{\proy}{\operatorname{proy}}
\nc{\vol}{\operatorname{vol}} \nc{\Diag}{\operatorname{Dg}}
\nc{\Spec}{\operatorname{Spec}} \nc{\Ima}{\operatorname{Im}} \nc{\Rea}{\operatorname{Re}}
\nc{\spann}{\operatorname{sp}}
\theoremstyle{plain}
\newtheorem{theorem}{Theorem}[section]
\newtheorem{proposition}[theorem]{Proposition}
\newtheorem{corollary}[theorem]{Corollary}
\newtheorem{lemma}[theorem]{Lemma}
\theoremstyle{definition}
\newtheorem{definition}[theorem]{Definition}
\theoremstyle{remark}
\newtheorem{remark}[theorem]{Remark}
\newtheorem{example}[theorem]{Example}
\title{Extremally Ricci pinched $G_2$-structures on Lie groups}
\author{Jorge Lauret} \author{Marina Nicolini}
\address{Universidad Nacional de C\'ordoba, FaMAF and CIEM, 5000 C\'ordoba, Argentina}
\email{lauret@famaf.unc.edu.ar} \email{mnicolini@famaf.unc.edu.ar}
\thanks{This research was partially supported by grants from CONICET, FONCYT and Universidad Nacional de C\'ordoba}
\begin{document}

\maketitle

\begin{abstract}
Only two examples of extremally Ricci pinched $G_2$-structures can be found in the literature and they are both homogeneous.  We study in this paper the existence and structure of such very special closed $G_2$-structures on Lie groups.  Strong structural conditions on the Lie algebra are proved to hold.  As an application, we obtain three new examples of extremally Ricci pinched $G_2$-structures and that they are all necessarily steady Laplacian solitons.  The deformation and rigidity of such structures are also studied.
\end{abstract}

\tableofcontents

\section{Introduction}\label{intro}

A $G_2$-{\it structure} on a $7$-dimensional differentiable manifold $M$ is a positive (or definite) differential $3$-form on $M$.  Each $G_2$-structure $\vp$ defines a Riemannian metric $g$ on $M$ together with an orientation and $(M,\vp)$ is called {\it homogeneous} if its automorphism group $\Aut(M,\vp):=\{ f\in\Diff(M):f^*\vp=\vp\}$ acts transitively on $M$.

As is well known, torsion-free (or parallel) $G_2$-structures (i.e. $d\vp=0$ and $d\ast\vp=0$) produce Ricci flat Riemannian metrics with holonomy contained in $G_2$.  Homogeneous torsion-free $G_2$-structures are therefore necessarily flat by \cite{AlkKml}.  In the case that $\vp$ is closed, the only torsion that survives is a $2$-form $\tau$ and one has that,
$$
d\vp=0, \qquad \tau=-\ast d\ast\vp, \qquad d\ast\vp=\tau\wedge\vp, \qquad d\tau=\Delta\vp.
$$
Closed $G_2$-structures play an important role as natural candidates to deform toward a torsion-free one via the {\it Laplacian flow} $\dpar\vp(t) = \Delta\vp(t)$, introduced back in 1992 by R. Bryant in \cite{Bry} (see \cite{Lty} for an account of recent advances).  In the homogeneous case, closed $G_2$-structures are only allowed on non-compact manifolds (see \cite{PdsRff}) and examples on non-solvable Lie groups were given in \cite{FinRff3}.

A closed $G_2$-structure is said to be {\it extremally Ricci-pinched} (ERP for short) when
$$
d\tau = \tfrac{1}{6}|\tau|^2\vp + \tfrac{1}{6}\ast(\tau\wedge\tau),
$$
one of the ways in which $d\tau$ can quadratically depend on $\tau$.  It is proved in \cite[(4.66)]{Bry} that this is the only way in the compact case.  In the homogeneous case, the only other possibility for a quadratic dependence is to have $d\tau = \frac{1}{7}|\tau|^2\vp$ (i.e.\ $\vp$ an eigenform of $\Delta$), though the existence of such structures is still an open problem (see \cite[Lemma 3.4]{LS-ERP} and \cite{Fields}).  ERP $G_2$-structures were introduced by R. Bryant in \cite[Remark 13]{Bry} and owe their name to the fact that they are precisely the structures at which equality holds in the following estimate for closed $G_2$-structures on a compact manifold $M$ obtained in \cite[Corollary 3]{Bry}\label{bryant}:
$$
\int_M \scalar^2 \ast 1 \leq 3\int_M |\Ricci|^2 \ast 1.
$$
This estimate does not hold in general in the homogeneous case, examples of closed $G_2$-structures on solvable Lie groups such that $\scalar^2 >  3 |\Ricci|^2$ were found in \cite{LS-ERP}.  In \cite{FinRff2}, it is proved that the Laplacian flow solution starting at an ERP $G_2$-structure $\vp$ is simply given by
$$
\vp(t) = \vp+c(t)d\tau, \qquad c(t)=\tfrac{6}{|\tau|^2}\left(e^{\frac{|\tau|^2}{6}t}-1\right),
$$
from which follows that the set of ERP $G_2$-structures is invariant under the Laplacian flow and the solutions are always eternal.

Until now, only two examples of ERP $G_2$-structures were known and they are both (locally) homogeneous: one on the homogeneous space $\Sl_2(\CC)\ltimes\CC^2/\SU(2)$ (see \cite[Example 1]{Bry}), or alternatively, on the solvable Lie group given in \cite[Section 6.3]{ClyIvn} (see also {\cite[Examples 4.13, 4.10]{LS-ERP}), and a second one on a unimodular solvable Lie group given in \cite[Example 4.7]{LS-ERP}.  It is worth highlighting that both examples are also steady Laplacian solitons, that is, they evolve under the Laplacian flow in the following silly way: there is a one-parameter family $f(t)\in\Diff(M)$ such that the Laplacian flow solution starting at $\vp$ is given by $\vp(t)=f(t)^*\vp$.

Motivated by this major lack of examples, we study in this paper left-invariant ERP $G_2$-structures on Lie groups, in which the $G_2$-structure can be identified with a positive $3$-form on the Lie algebra.  Our aim is to show that the condition produces quite strong structure constraints on the Lie algebra (see Section \ref{erp}).

We first introduce some notation.  Given a real vector space $\ggo$ with basis $\{ e_1,\dots,e_7\}$, we consider the positive $3$-form
\begin{align}
\vp=&e^{127}+e^{347}+e^{567}+e^{135}-e^{146}-e^{236}-e^{245} \notag \\
=&\omega_3\wedge e^3+\omega_4\wedge e^4+\omega_7\wedge e^7+e^{347}, \label{phi-intro}
\end{align}
where $\omega_7:=e^{12}+e^{56}$, $\omega_3:=e^{26}-e^{15}$ and $\omega_4:=e^{16}+e^{25}$, and let $\theta$ denote the usual representation of $\glg_4(\RR)$ on $\Lambda^2\RR^4$.  Two Lie groups endowed with $G_2$-structures $(G,\vp)$ and $(G',\vp')$ are called {\it equivariantly equivalent} if there is a Lie group isomorphism $f:G\rightarrow G'$ such that $\vp=f^*\vp'$.

We are now ready to state our main result (see Theorem \ref{erp-prop2} and Proposition \ref{erp-iff}).

\begin{theorem}\label{main}
Every Lie group endowed with a left-invariant ERP $G_2$-structure is equivariantly equivalent, up to scaling, to a $(G,\vp)$ with torsion
$\tau=e^{12}-e^{56}$, where $\vp$ is as in \eqref{phi-intro}, and the following conditions hold for the Lie algebra $\ggo$ of $G$:
\begin{itemize}
\item[(i)] $\hg:=\spann\{ e_1,\dots,e_6\}$ is a unimodular ideal.

\item[(ii)] $\ggo_0:=\spann\{ e_7,e_3,e_4\}$ is a Lie subalgebra  and $\ggo_1:=\spann\{ e_1,e_2,e_5,e_6\}$ is an abelian ideal of $\ggo$.  In particular, $\ggo=\ggo_0\ltimes\ggo_1$ and $\ggo$ is solvable.

\item[(iii)] $\hg_1:=\spann\{ e_3,e_4\}$ is an abelian subalgebra; in particular $\hg=\hg_1\ltimes\ggo_1$.

\item[(iv)] $\theta(\ad{e_7}|_{\ggo_1})\tau=\frac{1}{3}\omega_7$, $\theta(\ad{e_3}|_{\ggo_1})\tau=\frac{1}{3}\omega_3$ and $\theta(\ad{e_4}|_{\ggo_1})\tau=\frac{1}{3}\omega_4$.

\item[(v)] $\theta(\ad{e_7}|_{\ggo_1})\omega_7+\theta(\ad{e_3}|_{\ggo_1})\omega_3+\theta(\ad{e_4}|_{\ggo_1})\omega_4 = \tau +(\tr{\ad{e_7}|_{\ggo_0}})\omega_7$.
\end{itemize}
Conversely, if $\ggo$ satisfies (i)-(v), then $(G,\vp)$ is an ERP $G_2$-structure with torsion $\tau=e^{12}-e^{56}$.
\end{theorem}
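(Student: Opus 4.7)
The plan is to handle the forward direction (normal form and conditions (i)--(v)) and the converse separately, both ultimately coming from analysis of $d\vp=0$ together with the ERP equation $d\tau=\tfrac16|\tau|^2\vp+\tfrac16\ast(\tau\wedge\tau)$.

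For the forward direction, one first observes that $\tau\neq 0$: the case $\tau=0$ is the degenerate torsion-free (hence flat by \cite{AlkKml}) case excluded by the stated normal form. Since a closed $G_2$-structure has $\tau$ living in $\Lambda^2_{14}$, the irreducible $14$-dimensional adjoint $G_2$-module, a $G_2$-transformation (which keeps $\vp$ in its standard shape \eqref{phi-intro}) followed by a rescaling places $\tau$ in the canonical form $e^{12}-e^{56}$. A direct computation now gives $\tau\wedge\tau=-2\,e^{1256}$, $\ast e^{1256}=e^{347}$, and $|\tau|^2=2$, so the ERP identity reduces to
\begin{equation*}
d\tau\;=\;\tfrac13\bigl(\vp-e^{347}\bigr)\;=\;\tfrac13\bigl(\omega_3\wedge e^3+\omega_4\wedge e^4+\omega_7\wedge e^7\bigr),
\end{equation*}
with $\omega_3,\omega_4,\omega_7$ as in \eqref{phi-intro}.

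The next step is to convert this identity and $d\vp=0$ into linear conditions on the structure constants $c^k_{ij}$ defined by $de^k=-\tfrac12\sum c^k_{ij}e^{ij}$, and then carve (i)--(v) out of the resulting over-determined system. Matching the coefficients of $e^{ij7}$ with $i,j\in\{1,2,5,6\}$ in $d\tau$ directly yields the $\ad e_7|_{\ggo_1}$ relation in (iv); analogous coefficients with $e^3$ or $e^4$ in the third slot give the remaining relations of (iv). Coefficients with no index in $\{3,4,7\}$ force $\ggo_1$ to be abelian, while mixed coefficients force $[e_3,e_4]=0$ and the inclusions $[\hg_1,\ggo_1]\subset\ggo_1$ and $[\hg,\hg]\subset\hg$, giving (iii) and the ideal part of (i)--(ii). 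Unimodularity of $\hg$ in (i) then comes from $\tau=-\ast d\ast\vp$: computing $-\ast d\ast\vp$ from the Lie structure, its $e^{12}$- and $e^{56}$-components yield the trace identity $\tr(\ad e_7|_\hg)=0$. Finally, comparing the $e^{347}$-component on both sides of the ERP equation produces (v), in which the trace $\tr(\ad e_7|_{\ggo_0})$ enters naturally.

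The principal obstacle is this combinatorial step: the roughly seventy scalar equations couple many structure constants across the proposed splitting, and some delicate bookkeeping, aided by exploiting the residual $G_2$-gauge given by the stabilizer of $\tau$, is needed to conclude the clean decomposition $\ggo=\ggo_0\ltimes\ggo_1$ with $\ggo_1$ abelian. For the converse, assuming (i)--(v) one writes out all $de^k$ in terms of the four endomorphisms $\ad e_3|_{\ggo_1}$, $\ad e_4|_{\ggo_1}$, $\ad e_7|_{\ggo_0}$, $\ad e_7|_{\ggo_1}$, and verifies $d\vp=0$ using \eqref{phi-intro}: each contribution $d\omega_j\wedge e^j$ in $d\vp$ is cancelled by $\omega_j\wedge de^j$ precisely because of (iv) and (v). A subsequent computation of $d\ast\vp$ followed by $-\ast$ returns $\tau=e^{12}-e^{56}$, and one last computation of $d\tau$ using (iv) and (v) yields $\tfrac13(\vp-e^{347})$, establishing the ERP identity.
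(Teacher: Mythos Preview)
Your outline captures the right strategy but has two substantive gaps that the paper handles quite differently.

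First, the claim that a $G_2$-transformation plus rescaling puts any nonzero $\tau\in\Lambda^2_{14}$ into the form $e^{12}-e^{56}$ is false: the generic diagonal form is $a\,e^{12}+b\,e^{34}+c\,e^{56}$ with $a+b+c=0$, and one needs $abc=0$ to kill the middle term. The paper obtains this from Bryant's identity $\tau\wedge\tau\wedge\tau=0$, a nontrivial consequence of the ERP equation (\cite[(4.53)]{Bry}), which you never invoke. Relatedly, the paper uses $d(\tau\wedge\tau)=0$ and $d\ast(\tau\wedge\tau)=0$ (also due to Bryant) to see that $\ggo_0$ and $\ggo_1$ are subalgebras; you use this implicitly but never establish it.

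Second, and more seriously, your assertion that ``mixed coefficients force $[\hg,\hg]\subset\hg$'' and that components of $-\ast d\ast\vp$ yield unimodularity of $\hg$ is not justified. Once $\tau$ is normalized there is no a~priori reason why the particular subspace $\hg=\spann\{e_1,\dots,e_6\}$ should be an ideal; the paper has to \emph{choose} the basis so that this happens. In the non-unimodular case it first shows $\ggo_1\subset\ug:=\{\tr\ad=0\}$ and then acts by an element of the stabilizer $U_{\ggo_1,\tau}\subset G_2$ to rotate the unique trace-nonzero direction (which lies in $\ggo_0$) onto $e_7$. In the unimodular case this gauge is not enough: the paper imports a solvability result from \cite{FinRff2} together with Dotti's lemma \cite{Dtt} (forcing the nilradical into $\Ker\Ricci=\ggo_1$) to conclude that $\hg$ is an ideal. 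Your direct structure-constant bookkeeping cannot produce this external input.

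A smaller point: $d_{\ggo_1}\tau=0$ alone does not force $\ggo_1$ abelian. The paper argues that (iv) together with $\ad e_i|_{\ggo_1}\in\Der(\ggo_1)$ makes all four $2$-forms $\tau,\omega_3,\omega_4,\omega_7$ closed on the $4$-dimensional algebra $\ggo_1$, and then checks (with computer assistance) that this forces abelianness. For the converse, note that in the paper $d\vp=0$ follows from (iv) combined with Jacobi, while (v) is what pins down $\tau=e^{12}-e^{56}$; you have these roles slightly reversed.
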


As a first application, we obtain the following geometric consequence.

\begin{corollary}
Any left-invariant ERP $G_2$-structure on a Lie group is a steady Laplacian soliton and its underlying metric is an expanding Ricci soliton.
\end{corollary}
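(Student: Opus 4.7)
The plan is to use the explicit algebraic data of Theorem \ref{main} to construct, for each soliton condition, a derivation of $\ggo$ that witnesses it.

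For the Laplacian soliton claim, I normalize so that $\tau=e^{12}-e^{56}$. Then $|\tau|^2=2$, $\tau\wedge\tau=-2\,e^{1256}$, and a direct Hodge-star computation with the standard $G_2$-structure $\vp$ yields $\ast e^{1256}=e^{347}$, so the ERP identity rewrites as
\[
\Delta\vp \;=\; d\tau \;=\; \tfrac{1}{3}\bigl(\omega_3\wedge e^3+\omega_4\wedge e^4+\omega_7\wedge e^7\bigr).
\]
Motivated by this formula, define $D\in\End(\ggo)$ by $D|_{\ggo_0}=0$ and $D|_{\ggo_1}=-\tfrac{1}{6}I_{\ggo_1}$. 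Items (ii)-(iii) of the theorem imply $D\in\Der(\ggo)$: the only potentially nontrivial case is $X\in\ggo_0,\ Y\in\ggo_1$, where $[X,Y]\in\ggo_1$ gives $D[X,Y]=-\tfrac{1}{6}[X,Y]=[X,DY]=[DX,Y]+[X,DY]$. The induced action on $\ggo^*$ satisfies $\theta(D)e^i=0$ for $i\in\{3,4,7\}$ and $\theta(D)e^j=\tfrac{1}{6}e^j$ for $j\in\{1,2,5,6\}$, whence $\theta(D)\omega_k=\tfrac{1}{3}\omega_k$ for $k\in\{3,4,7\}$ and $\theta(D)e^{347}=0$. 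Therefore $\theta(D)\vp=\tfrac{1}{3}(\omega_3\wedge e^3+\omega_4\wedge e^4+\omega_7\wedge e^7)=\Delta\vp$. The one-parameter family $f_t=\exp(tD)\in\Aut(G)$ then makes $f_t^*\vp$ a Laplacian-flow solution, since $\partial_t(f_t^*\vp)=f_t^*\theta(D)\vp=f_t^*\Delta_g\vp=\Delta_{f_t^*g}(f_t^*\vp)$; this is the steady Laplacian soliton property.

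For the expanding Ricci soliton claim, I invoke the formula expressing the Ricci operator of a closed $G_2$-structure on a Lie group purely in terms of $\tau$ and the bracket of $\ggo$ (Bryant's formula, as adapted in \cite{LS-ERP}). Together with the rigid splitting $\ggo=\ggo_0\ltimes\ggo_1$ and items (iv)-(v), which control how $\ad_{e_3},\ad_{e_4},\ad_{e_7}$ act on $\ggo_1$, this yields a decomposition
\[
\Ricci \;=\; c\,I + D'\qquad\text{with}\ c<0,\ \ D'\in\Der(\ggo).
\]
Since $\ggo$ is solvable by (ii), Lauret's correspondence between algebraic Ricci solitons and genuine Ricci solitons on solvmanifolds then concludes that $g$ is an expanding Ricci soliton (the negative sign of $c$ dictates the expanding character).

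The main obstacle is the second part: verifying explicitly that the residual $\Ricci-cI$ lies in $\Der(\ggo)$, which requires combining the closed $G_2$ Ricci formula with the identities (iv)-(v) to track how each bracket direction contributes. The Laplacian-soliton part, by contrast, reduces to the clean algebraic verification above once the correct derivation $D$ has been identified, and essentially falls out of the normalization in Theorem \ref{main}.
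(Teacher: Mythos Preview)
Your approach is essentially the paper's: the operator $D$ you construct is exactly the $Q_\mu$ of Proposition~\ref{erp-prop3}(iii), and the paper's argument (via Remark~\ref{steady}) is precisely that $Q_\mu\in\Der(\ggo)$ once $\ggo_1$ is known to be an abelian ideal, whence both soliton conditions follow.

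However, you are making the Ricci part harder than it is. You do not need the general closed-$G_2$ Ricci formula together with items (iv)--(v); the ERP-specific computation (Proposition~\ref{erp-prop}(iv), or equivalently \cite[(16)]{LS-ERP} with $q=\tfrac{1}{6}$) already gives, with your normalization $|\tau|^2=2$,
\[
\Ricci|_{\ggo_0}=-\tfrac{1}{3}I,\qquad \Ricci|_{\ggo_1}=0,
\]
so that $\Ricci=-\tfrac{1}{3}I-2D$ for the \emph{same} derivation $D$ you just built. Thus $c=-\tfrac{1}{3}$ and $D'=-2D\in\Der(\ggo)$ immediately, and the ``main obstacle'' you flag dissolves: no separate verification using (iv)--(v) is needed. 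The structural items (ii)--(iii) (giving $\ggo=\ggo_0\ltimes\ggo_1$ with $\ggo_1$ abelian) are all that is used for both claims.
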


It is worth pointing out that the converse of the above corollary does not hold. Indeed, an example of a simply connected solvable Lie group endowed with a
steady Laplacian soliton that is not an ERP $G_2$-structure is exhibited in \cite{FinRff3}.

Structurally, it follows from Theorem \ref{main} that the Lie algebra $\ggo$ of any ERP $(G,\vp)$ is determined by the $2\times 2$ matrix
$A_1:=\ad{e_7}|_{\hg_1}$ and the three $4\times 4$ matrices $A:=\ad{e_7}|_{\ggo_1}$, $B=\ad{e_3}|_{\ggo_1}$, $C:=\ad{e_4}|_{\ggo_1}$. The Jacobi condition
is equivalent to
$$
[A,B]=aB+cC, \quad [A,C]=bB+dC, \quad [B,C]=0, \qquad
A_1=\left[\begin{smallmatrix} a&b\\ c&d\end{smallmatrix}\right].
$$
It must be stressed that conditions (iv) and (v) are really demanding on these matrices.

In Section \ref{cla-sec}, we exhibit three new examples of ERP $G_2$-structures on Lie groups and obtain further refinements for the algebraic structure of $\ggo$ by using the structural theorem on solvsolitons \cite[Theorem 4.8]{solvsolitons}.  We prove that there are only three possibilities for the nilradical $\ngo$ of $\ggo$ and that the following conditions must hold in each case:

\begin{itemize}
\item $\ngo=\ggo_1$: this is equivalent to $\ggo$ unimodular and one has that $A_1=0$, the matrices $A,B,C$ are all symmetric, they pairwise commute and $\left\{\sqrt{3}A, \sqrt{3}B, \sqrt{3}C\right\}$ is orthonormal.  In particular, $\ggo$ is isomorphic to the Lie algebra of \cite[Example 4.7]{LS-ERP}, a result previously obtained in \cite{FinRff2}.

\item $\ngo=\RR e_4\oplus\ggo_1$: $A,B$ are symmetric, $[A,B]=0$, $C$ is nilpotent and $a=b=c=0$.  We found two new examples in this case, with $\ngo$ $2$-step and $3$-step nilpotent, respectively.

\item $\ngo=\hg$: $A_1$ and $A$ are normal and $B,C$ nilpotent.  A new example is given with $\ngo$ $4$-step nilpotent.
\end{itemize}

Lastly, we study in Section \ref{deform-sec} deformations and rigidity of ERP $G_2$-structures on Lie groups by using the moving-bracket approach.  We have obtained that the five known examples are all rigid.

We believe that the present paper paves the way toward achieving a complete classification of ERP $G_2$-structures on Lie groups, which will be the object of further research.

\vs \noindent {\it Acknowledgements.} We are very grateful with Alberto Raffero for very helpful comments on a first version of the paper.

\section{Preliminaries}\label{preli}

\subsection{Linear algebra}
Given a real vector space $\ggo$ with basis $\{ e_1,\dots,e_7\}$, we consider the positive $3$-form
\begin{equation}\label{phiA}
\vp=\omega\wedge e^7+\rho^+=e^{127}+e^{347}+e^{567}+e^{135}-e^{146}-e^{236}-e^{245},
\end{equation}
where
$$
\omega:=e^{12}+e^{34}+e^{56}, \qquad \rho^+:=e^{135}-e^{146}-e^{236}-e^{245}.
$$
The usual notation $e^{ij\cdots}$ to indicate $e^i\wedge e^j\wedge\cdots$ will be freely used throughout the paper.  Note that $\omega\wedge\rho^+=0$.  We have that $\{ e_1,\dots,e_7\}$ is an oriented orthonormal basis with respect to the inner product $\ip$ and orientation $\vol$ determined by $\vp$, i.e.
\begin{equation}\label{phi-ip}
\la X,Y\ra\vol = \frac{1}{6} i_X(\vp)\wedge i_Y(\vp)\wedge\vp, \qquad\forall X,Y\in\ggo.
\end{equation}

The almost-complex structure $J$ defined on the subspace $\hg:=\spann\{ e_1,\dots,e_6\}$ by $\omega=\la J\cdot,\cdot\ra$ is given by $Je_i=e_{i+1}$, $i=1,3,5$, and we set
$$
\rho^-:= \ast_\hg\rho^+ = e^{145}+e^{136}+e^{235}-e^{246}.
$$
Let $\theta:\glg(\hg)\longrightarrow\End(\Lambda^k\hg^*)$ denote the representation obtained as the derivative of the natural left $\Gl(\hg)$-action on
each $\Lambda^k\hg^*$ (i.e.\ $h\cdot\alpha=\alpha(h^{-1}\cdot,\dots,h^{-1}\cdot)$), which is given for each $B\in\glg(\hg)$ by,
$$
\theta(B)\gamma = \ddt\Big|_0 e^{tB}\cdot\gamma = -\left(\gamma(B\cdot,\dots,\cdot) + \dots +\gamma(\cdot,\dots,B\cdot)\right), \qquad\forall\gamma\in\Lambda^k\hg^*.
$$
The following technical lemma contains some useful information on the linear algebra involved in subsequent computations.

\begin{lemma}\label{tech1}
Let $\ast:\Lambda^k\ggo^*\longrightarrow\Lambda^{7-k}\ggo^*$ and $\ast_\hg:\Lambda^k\hg^*\longrightarrow\Lambda^{6-k}\hg^*$ be the Hodge star operators determined by the ordered bases $\{ e_1,\dots,e_7\}$ and $\{ e_1,\dots,e_6\}$, respectively.

\begin{itemize}
\item[(i)] $\ast\gamma=\ast_\hg\gamma\wedge e^7$, for any $\gamma\in\Lambda^k\hg^*$.

\item[(ii)] $\ast(\gamma\wedge e^7)=(-1)^k\ast_\hg\gamma$, for any $\gamma\in\Lambda^k\hg^*$.

\item[(iii)] $\ast_\hg\omega=\unm\omega\wedge\omega$ and $\ast_\hg(\omega\wedge\omega)=2\omega$.

\item[(iv)] $\ast^2=id$ and $\ast_\hg^2=(-1)^k id$ on $\Lambda^k\hg^*$.

\item[(v)] $\ast\vp = \unm\omega\wedge\omega + \rho^-\wedge e^7 = e^{3456}+e^{1256}+e^{1234}-e^{2467}+e^{2357}+e^{1457}+e^{1367}$.
\item[(vi)] $\theta(A)\ast_\hg + \ast_\hg\theta(A^t) = -(\tr{A})\ast_\hg$ on $\Lambda\hg^*$, for any $A\in\glg(\hg)$.
\end{itemize}
\end{lemma}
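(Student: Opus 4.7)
The plan is to handle (i)--(v) as direct computations with monomials in the ordered bases $\{e_1,\ldots,e_7\}$ and $\{e_1,\ldots,e_6\}$, and then prove (vi), the only conceptually nontrivial item, by applying the derivation $\theta(A)$ to the defining identity of $\ast_\hg$.

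For (i), if $\gamma=e^{i_1\cdots i_k}$ with all $i_\ell\leq 6$ and $\{j_1,\ldots,j_{6-k}\}$ denotes the complement inside $\{1,\ldots,6\}$, then $\ast\gamma$ and $\ast_\hg\gamma\wedge e^7$ are both equal to $\pm e^{j_1\cdots j_{6-k}7}$, and a parity check shows the signs agree. For (ii), the extra factor $(-1)^k$ arises by moving $e^7$ past the $6-k$ indices of $\ast_\hg\gamma$. Item (iii) is immediate from $\omega=e^{12}+e^{34}+e^{56}$ and $\omega\wedge\omega=2(e^{1234}+e^{1256}+e^{3456})$. Item (iv) is the standard identity $\ast^2=(-1)^{k(n-k)}\operatorname{id}$ on a Riemannian oriented vector space, specialized to $n=7$ (where $k(7-k)$ is always even) and to $n=6$ (where the parity equals that of $k$). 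Item (v) then follows from $\vp=\omega\wedge e^7+\rho^+$ by applying (ii) to the first summand (with $k=2$), (i) to the second, and using (iii) together with $\rho^-:=\ast_\hg\rho^+$.

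For (vi), the starting point is the defining identity $\alpha\wedge\ast_\hg\beta=\la\alpha,\beta\ra\vol_\hg$ for $\alpha,\beta\in\Lambda^k\hg^*$. The operator $\theta(A)$ is a derivation of $\Lambda\hg^*$, and since $\Gl(\hg)$ acts on top forms by $(\det h)^{-1}$ one has $\theta(A)\vol_\hg=-(\tr A)\vol_\hg$. Applying $\theta(A)$ to both sides therefore yields
\[
\theta(A)\alpha\wedge\ast_\hg\beta+\alpha\wedge\theta(A)\ast_\hg\beta=-(\tr A)\,\alpha\wedge\ast_\hg\beta.
\]
Rewriting the first summand as $\alpha\wedge\ast_\hg D\beta$, where $D$ denotes the adjoint of $\theta(A)$ on $\Lambda^k\hg^*$ for the induced inner product, and using the nondegeneracy of the wedge pairing $\Lambda^k\hg^*\times\Lambda^{6-k}\hg^*\to\Lambda^6\hg^*$, I obtain $\theta(A)\ast_\hg+\ast_\hg D=-(\tr A)\ast_\hg$. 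It remains to identify $D=\theta(A^t)$: on $\hg^*$ one has $\theta(A)=-A^t$, whose inner-product adjoint is $-A=\theta(A^t)|_{\hg^*}$; and since the adjoint of a derivation of $\Lambda\hg^*$ is the derivation generated by the adjoint on $\hg^*$ (immediate from $\la v_1\wedge\cdots,w_1\wedge\cdots\ra=\det\la v_a,w_b\ra$ and the cofactor expansion), the derivations $D$ and $\theta(A^t)$ of $\Lambda\hg^*$ coincide, completing the proof.

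The only obstacle is sign and convention bookkeeping: the parity count in (ii), the sign in (iv), and the distinction between the matrix transpose $A^t$ and the inner-product adjoint on $\Lambda^k\hg^*$ in (vi) are all routine but easy to mishandle.
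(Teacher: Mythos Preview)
Your proposal is correct and follows essentially the same approach as the paper: parts (i)--(v) are dispatched as direct computations (the paper simply cites them as easy), and part (vi) is obtained from the defining identity $\alpha\wedge\ast_\hg\beta=\la\alpha,\beta\ra\vol_\hg$ together with the derivation property of $\theta(A)$, the fact that $\theta(A)\vol_\hg=-(\tr A)\vol_\hg$, and the identification of the adjoint of $\theta(A)$ with $\theta(A^t)$. The only cosmetic difference is that you apply $\theta(A)$ to the full identity first and then invoke the adjoint, whereas the paper passes to the adjoint first and then uses the derivation on a top form; the ingredients are identical.
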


\begin{proof}
Parts (i)-(v) follow easily (see e.g.\ \cite[Lemmas 5.11, 5.12]{LF}) and to prove part (vi), we first recall that
$$
\alpha\wedge\ast_\hg\beta = \la\alpha,\beta\ra \ast_\hg 1, \qquad \ast_\hg 1=e^1\wedge\dots\wedge e^6, \qquad\forall\alpha,\beta\in\Lambda^k\hg^*.
$$
Thus, for any $\alpha\in\Lambda^p\hg^*$ and $\beta\in\Lambda^{6-p}\hg^*$, one has
\begin{align*}
\la\alpha,\theta(A)\ast_\hg\beta\ra\ast_\hg 1 =& \la\theta(A^t)\alpha,\ast_\hg\beta\ra\ast_\hg 1  = \theta(A^t)\alpha\wedge\ast_\hg^2\beta = (-1)^p\theta(A^t)\alpha\wedge\beta \\
=& (-1)^{p+1}(\tr{A})\alpha\wedge\beta + (-1)^{p+1}\alpha\wedge\theta(A^t)\beta  \\
=& -(\tr{A})\alpha\wedge\ast_\hg\ast_\hg\beta-\alpha\wedge\ast_\hg\ast_\hg\theta(A^t)\beta \\
=& \la\alpha,-(\tr{A})\ast_\hg\beta-\ast_\hg\theta(A^t)\beta\ra\ast_\hg 1,
\end{align*}
concluding the proof of the lemma.
\end{proof}

Recall that $\theta(B)$ is a derivation of the algebra $\Lambda\hg^*$ and that $\theta(B)e^{1\cdots 6}=-(\tr{B})e^{1\cdots 6}$.  We consider the $14$-dimensional simple Lie group
$$
G_2:=\{ h\in\Gl_7(\RR):h\cdot\vp=\vp\}\subset\SO(7),
$$
where $\vp$ is as in \eqref{phiA}.  Let $\ggo_2$ denote the Lie algebra of $G_2$.  The spaces of $2$-forms and $3$-forms on $\ggo$ respectively decompose into irreducible $G_2$-representations as follows,
$$
\Lambda^2\ggo^* = \Lambda^2_{7}\ggo^*\oplus\Lambda^2_{14}\ggo^*, \qquad \Lambda^3\ggo^* = \Lambda^3_{1}\ggo^*\oplus\Lambda^3_{7}\ggo^*\oplus\Lambda^3_{27}\ggo^*,
$$
where subscript numbers are the dimensions.  A description of each of these irreducible components (see e.g.\ \cite[(2.14)]{Bry}) can be obtained by considering different suitable $G_2$-equivariant linear maps.   For example, the kernel of the map $\Lambda^2\ggo^*\longrightarrow\Lambda^6\ggo^*$, $\alpha\mapsto\alpha\wedge\ast\vp$ must be $\Lambda^2_{14}\ggo^*$.  On the other hand, the map $\Lambda^2\ggo^*\longrightarrow\Lambda^2\ggo^*$, $\alpha\mapsto\ast(\alpha\wedge\vp)$ is necessarily a multiple of the identity and one obtains that such a multiple is $-1$ by evaluating at $e^{12}-e^{34}$.  This implies that
\begin{equation}\label{L214}
\Lambda^2_{14}\ggo^*=\{\alpha\in\Lambda^2\ggo^*:\alpha\wedge\ast\vp=0\} = \{\alpha\in\Lambda^2\ggo^*:\alpha\wedge\vp=-\ast\alpha\}.
\end{equation}
Since $\Lambda^2_{14}\ggo^*$ is, as a $G_2$-representation, equivalent to the adjoint representation $\ggo_2$, any nonzero $\tau\in\Lambda^2_{14}\ggo^*$
can be diagonalized, in the sense that there exists an oriented orthonormal basis $\{ e_1,\dots,e_7\}$ of $\ggo$ such that $\vp$ is as in \eqref{phiA} and
\begin{equation}\label{tau-normal}
\tau=a\, e^{12}+b\, e^{34}+c\, e^{56}, \qquad a+b+c=0, \qquad a\geq b\geq 0> c.
\end{equation}
In particular,
\begin{align}
\tau\wedge\tau&=2ab\, e^{1234}+2ac\, e^{1256}+2bc\, e^{3456}, \notag\\
\tau\wedge\tau\wedge\tau&=6abc\, e^{123456}, \label{prop-tn}\\
|\tau\wedge\tau|&=|\tau|^2=a^2+b^2+c^2.  \notag
\end{align}

\subsection{The Lie group $G_\mu$}\label{Gmu}
Let $\ggo$ be a Lie algebra of dimension $7$ and assume that $\ggo$ has a $6$-dimensional ideal $\hg$.  Consider a basis $\{ e_1,\dots,e_7\}$ of $\ggo$ such that $\hg=\spann\{e_1,\dots,e_6\}$, so $\ggo=\hg\rtimes\RR e_7$.  The Lie bracket $\mu$ of $\ggo$ is therefore given by
\begin{equation}\label{mu}
\mu=\lambda+\mu_A,
\end{equation}
where $\lambda$ is the Lie bracket of $\hg$ (extended to $\ggo$ by $\lambda(\ggo,e_7)=0$) and $\mu_A$ is the Lie bracket defined for some $A\in\Der(\hg)$ by
$$
\mu_A(e_7,v)=Av, \qquad \mu_A(v,w)=0, \qquad \forall v,w\in\hg.
$$
Let $G_\mu$ denote the simply connected Lie group with Lie algebra $(\ggo,\mu)$.  Note that $G_\mu$ is solvable if and only if the Lie algebra $(\hg,\lambda)$ is solvable, and it is nilpotent if and only if $(\hg,\lambda)$ is nilpotent and $A$ is a nilpotent linear map.  Denote by $H_\lambda$ the simply connected Lie group with Lie algebra $(\hg,\lambda)$ and by $G_A$ the simply connected Lie group with Lie algebra $(\ggo,\mu_A)$.

Some properties of the differentials of forms on these Lie groups are given in the following lemma.

\begin{lemma}\label{tech3}
Let $d_\mu,d_\lambda,d_A$ denote the differentials of left-invariant $k$-forms on the Lie groups $G_\mu$, $H_\lambda$ and $G_A$, respectively.
\begin{itemize}
\item[(i)] $d_\mu=d_\lambda+d_A$, for any $\gamma=\alpha+\beta\wedge e^7\in\Lambda^k\ggo^*$, $\alpha\in\Lambda^k\hg^*$, $\beta\in\Lambda^{k-1}\hg^*$,
$$
d_\mu\gamma=d_\lambda\alpha + d_\lambda\beta\wedge e^7 + d_A\alpha,
$$
and $d_A\alpha=(-1)^k\theta(A)\alpha\wedge e^7$.

\item[(ii)] $d_\mu e^7=0$, $d_\lambda e^7=0$, $d_A e^7=0$ and $d_A(\alpha\wedge e^7)=0$, for all $\alpha\in\Lambda^k\hg^*$.

\item[(iii)] $d_\lambda\circ\theta(D)=\theta(D)\circ d_\lambda$ for any $D\in\Der(\hg)$.

\item[(iv)] $d_\mu\ast e^i = (-1)^i\tr(\ad_\mu{e_i}) e^{1\dots7}$, for any $i=1,\dots,7$.

\item[(v)] $d_\mu^*|_{\Lambda^k\ggo^*}=(-1)^{k+1}\ast d_\mu\ast$ and $d_\lambda^*|_{\Lambda^k\hg^*}=\ast d_\lambda\ast$ for any $k$.
\end{itemize}
\end{lemma}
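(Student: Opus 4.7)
The approach is to exploit the additive decomposition $\mu = \lambda + \mu_A$, the linearity of the Chevalley--Eilenberg differential in the bracket, and the very restricted form of $\mu_A$, which vanishes on $\hg\times\hg$ and couples $e_7$ with $\hg$ only via $A$.

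For (i), the CE differential is manifestly linear in the bracket, so $\mu = \lambda + \mu_A$ yields $d_\mu = d_\lambda + d_A$ on $\Lambda\ggo^*$. To obtain the formula $d_A\alpha = (-1)^k \theta(A)\alpha \wedge e^7$ for $\alpha \in \Lambda^k\hg^*$, one evaluates $(d_A\alpha)(X_0,\dots,X_{k-1},e_7)$ directly from the definition: only pairs $(X_i, e_7)$ contribute nonzero $\mu_A$-brackets, and substituting $\mu_A(X_i, e_7) = -AX_i$ produces, after reordering, exactly $\theta(A)\alpha(X_0,\dots,X_{k-1})$ up to the asserted sign. The decomposition of $d_\mu\gamma$ then follows from the Leibniz rule together with (ii). For (ii), the vanishings $d_\mu e^7 = d_\lambda e^7 = d_A e^7 = 0$ each reduce to the fact that $\hg$ is an ideal, so all relevant brackets lie in $\ker e^7$; and $d_A(\alpha\wedge e^7) = 0$ because the formula in (i) inserts a second copy of $e^7$. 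Part (iii) is the familiar principle that a derivation commutes with the CE differential of its algebra, obtained by differentiating the automorphism identity $(\exp tD)^* d_\lambda = d_\lambda (\exp tD)^*$ at $t = 0$, for $D\in\Der(\hg)$.

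For (iv), write $\ast e^i = (-1)^{i-1} e^{1\cdots\hat{i}\cdots 7}$ and expand $d_\mu$ of this $6$-form by the Leibniz rule. The output is a $7$-form, so only the $e^i\wedge e^j$-component of each $d_\mu e^j$ (with $j\neq i$) contributes; these components equal $-(\ad_\mu e_i)^j_j$, and summing over $j\neq i$ gives $-\tr(\ad_\mu e_i)$ (using $(\ad_\mu e_i)^i_i = 0$). Combining the Hodge star sign with the alternating Leibniz signs produces the announced factor $(-1)^i$. Part (v) records standard Hodge-theoretic identities, proved in the spirit of Lemma \ref{tech1}(vi), by writing inner products of forms as wedges with Hodge duals and using $\ast^2 = \mathrm{id}$ on $\Lambda^k\ggo^*$ together with $\ast_\hg^2 = (-1)^k\,\mathrm{id}$ on $\Lambda^k\hg^*$. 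The principal technical obstacle is sign bookkeeping, most acutely in (iv), where one must simultaneously track the Hodge star sign $(-1)^{i-1}$, the alternating signs from the Leibniz expansion of the product $e^1\wedge\cdots\wedge\hat{e}^i\wedge\cdots\wedge e^7$, and the permutation parities arising when reordering basis vectors inside the wedge; no conceptual novelty is required beyond what is already exemplified in the proof of Lemma \ref{tech1}(vi).
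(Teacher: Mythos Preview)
Your proposal is correct and follows essentially the same approach as the paper's proof, only with considerably more detail: the paper dismisses (i), (ii), (iv), (v) as clear or straightforward computations (citing \cite[Lemma~5.12]{LF} for (i)--(ii)), and for (iii) invokes that $\theta(D)$ is minus the Lie derivative $\mathcal{L}_{X_D}$ for the vector field $X_D$ on $H_\lambda$ attached to $D$ --- which is exactly what you obtain by differentiating the automorphism identity at $t=0$.
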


\begin{proof}
Parts (i) and (ii) clearly hold (see e.g.\ \cite[Lemma 5.12]{LF}), parts (iv) and (v) are straightforward computations and part (iii) follows from the fact that $\theta(D)$ is precisely minus the Lie derivative $\lca_{X_D}$, where $X_D$ is the vector field on $H_\lambda$ attached to $D$.
\end{proof}

\subsection{Subgroups of $G_2$} \label{subg-sec}
In our study of ERP $G_2$-structures in Section \ref{erp}, we need to compute the stabilizer of the $2$-form $\tau:=e^{12}-e^{56}$ in $G_2$, as well as inside the subgroup $U_\hg\subset G_2$ leaving $\hg$ invariant.  It is well known (see e.g.\ \cite[Lemma 2.2.2]{VanMnr}) that
\begin{equation}\label{Uh}
U_\hg:=\left\{ h\in G_2 : h(\hg)\subset\hg\right\} = \left[\begin{array}{c|c} 1&\\\hline &\SU(3)\end{array}\right] \cup \left[\begin{array}{c|c} 1&\\\hline &\SU(3)\end{array}\right]\tilde{g},
\end{equation}
where $\tilde{g} = \Diag(-1,1,-1,1,-1,1,-1)$ and $\SU(3)$ is defined by using $J$.  Any matrix in this section will be written in terms of the basis $\{ e_7,e_3,e_4,e_1,e_2,e_5,e_6\}$.

\begin{lemma}\label{subg}
The subgroup of $G_2$
$$
U_{\hg,\tau}:=\left\{ h\in G_2 : h(\hg)\subset\hg, \; h\cdot\tau=\tau\right\},
$$
is given by $U_{\hg,\tau}=U_0\cup U_0g$, where
$$
U_0:=\left\{\left[\begin{smallmatrix}
1&&& \\ &h_1&& \\ &&h_2&\\ &&&h_3
\end{smallmatrix}\right] : h_i\in\SO(2), \; h_1h_2h_3=I\right\}, \quad
g:=\left[\begin{smallmatrix}
-1&&&&&& \\ &1&&&&& \\ &&-1&&&&\\ &&&&&1&0\\ &&&&&0&-1\\ &&&-1&0&&\\ &&&0&1&&
\end{smallmatrix}\right].
$$
\end{lemma}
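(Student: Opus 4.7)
The plan is to determine $U_{\hg,\tau}$ by intersecting it with each of the two cosets in the decomposition $U_\hg = (1\oplus\SU(3)) \sqcup (1\oplus\SU(3))\tilde{g}$ from \eqref{Uh}.

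For the identity-component part $U_{\hg,\tau}\cap(1\oplus\SU(3))$, I would use the spectral theory of $\tau$ viewed as a Hermitian form on $(\hg,J)\simeq\CC^3$. The three $J$-invariant orthogonal planes $V_1=\spann\{e_1,e_2\}$, $V_2=\spann\{e_3,e_4\}$, $V_3=\spann\{e_5,e_6\}$ correspond to $\CC$-lines $L_1,L_2,L_3$. A direct check shows $\tau=e^{12}-e^{56}$ is $J$-invariant and that the associated Hermitian form $\tau(\cdot,J\cdot)$ is diagonal with distinct eigenvalues $+1, 0, -1$ on $L_1, L_2, L_3$. Since the eigenvalues are distinct, any $h\in\SU(3)$ preserving $\tau$ must preserve each eigenline, so is block-diagonal with blocks in $U(L_k)\simeq\SO(2)$; the determinant-one condition becomes $h_1h_2h_3=I$ in $\SO(2)$, recovering exactly $U_0$.

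For the other coset, I would show that the matrix $g$ in the statement belongs to $U_{\hg,\tau}\cap(1\oplus\SU(3))\tilde{g}$. This amounts to three routine checks: $g(\hg)\subset\hg$ (clear from the block structure), $g\cdot\vp=\vp$ (monomial-by-monomial, using $g\cdot e^1=-e^5$, $g\cdot e^2=e^6$, $g\cdot e^3=e^3$, $g\cdot e^4=-e^4$, $g\cdot e^5=e^1$, $g\cdot e^6=-e^2$, $g\cdot e^7=-e^7$), and $g\cdot\tau=\tau$ (immediate, since $g\cdot e^{12}=-e^{56}$ and $g\cdot e^{56}=-e^{12}$). To confirm that $g$ lies in the non-identity coset rather than in $1\oplus\SU(3)$, observe that $g$ anti-commutes with $J$ on each $V_k$ (for instance $gJe_1=ge_2=e_6$ while $Jge_1=-Je_5=-e_6$), whereas elements of $1\oplus\SU(3)$ commute with $J$.

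A standard coset argument then yields $U_{\hg,\tau}\cap(1\oplus\SU(3))\tilde{g}=U_0\,g$, since this intersection is nonempty, contains $g$, and is closed under left multiplication by $U_0$, hence a single left $U_0$-coset. Combining with the first step gives $U_{\hg,\tau}=U_0\cup U_0\,g$. The main obstacle I anticipate is purely bookkeeping: the verification $g\cdot\vp=\vp$ requires careful sign tracking across the seven monomials of $\vp$, and the identification of the $\SU(3)$-determinant condition with $h_1h_2h_3=I$ in $\SO(2)$ depends on matching the $\RR$-orientation of each $V_k$ with the complex orientation of $L_k$. Conceptually, no input beyond the spectral theorem for the Hermitian form associated to $\tau$ is needed.
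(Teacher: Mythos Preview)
Your approach is correct and follows the same overall strategy as the paper: intersect $U_{\hg,\tau}$ with each of the two cosets in the decomposition $U_\hg=(1\oplus\SU(3))\sqcup(1\oplus\SU(3))\tilde{g}$. The differences are only in the execution of each step. For the identity coset you invoke the spectral decomposition of the Hermitian form $\tau(\cdot,J\cdot)$ on $\CC^3$, whereas the paper rewrites the condition $h\cdot\tau=\tau$ as $hJ_1h^{-1}=J_1$ for the skew endomorphism $J_1$ associated to $\tau$ and argues from simultaneous commutation with $J$ and $J_1$; these are two sides of the same linear-algebraic coin. For the non-identity coset you verify directly that $g\in G_2$, $g\cdot\tau=\tau$, and $g\notin 1\oplus\SU(3)$, and then appeal to the index-two coset structure; the paper instead factors $g$ as an explicit $\SU(3)$-element times $\tilde{g}$ (using $\tilde g\cdot\tau=-\tau$) and then, for an arbitrary $h=f\tilde g$ in that coset, determines the block shape of $f|_\hg$ by hand to show $h\in U_0g$.

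One small point of exposition: your sentence ``closed under left multiplication by $U_0$, hence a single left $U_0$-coset'' does not follow from closure alone. The clean statement you want is that if $H\le G$ and $K\le G$ has index two, then $H\cap(G\smallsetminus K)$ is either empty or a single $(H\cap K)$-coset; here $G=U_\hg$, $K=1\oplus\SU(3)$, $H=U_{\hg,\tau}$, and $g\in H\smallsetminus K$ gives $H\cap(G\smallsetminus K)=U_0g$. With that one line added your argument is complete, and arguably tidier than the paper's explicit computation.
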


\begin{proof}
Since for any $h\in\Or(7)$, $h\cdot\tau=\tau$ if and only if
$$
hJ_1h^{-1}=J_1, \qquad J_1:=\left[\begin{smallmatrix}
0&&&&&& \\ &0&&&&& \\ &&0&&&&\\  &&&0&-1&&\\ &&&1&0&&\\ &&&&&0&1\\ &&&&&-1&0
\end{smallmatrix}\right],
$$
it is not hard to see that
$$
U_0 = U_{\hg,\tau} \cap \left[\begin{array}{c|c} 1&\\\hline &\SU(3)\end{array}\right].
$$
On the other hand, by using that $\tilde{g}\in G_2$ and $\tilde{g}\cdot\tau=-\tau$ (see \eqref{Uh}), we obtain that
$$
g = \left[\begin{smallmatrix}
1&&&&&& \\ &1&&&&& \\ &&1&&&&\\  &&&&&1&0\\ &&&&&0&1\\ &&&-1&0&&\\ &&&0&-1&&
\end{smallmatrix}\right]\tilde{g} \in U_{\hg,\tau}\cap\left[\begin{array}{c|c} 1&\\\hline &\SU(3)\end{array}\right]\tilde{g}.
$$
Now if $h=f\tilde{g}\in U_{\hg,\tau}$, where $fe_7=e_7$ and $f_0:=f|_\hg\in\SU(3)$, then
$$
f_0=\left[\begin{smallmatrix} f_1&&\\ &0&f_2\\ &f_3&0 \end{smallmatrix}\right], \qquad f_1f_2f_3=-1,
$$
as $f_0$ must commute with $J|_\hg$ and $J_1|_\hg$, and therefore,
$$
h = \left[\begin{smallmatrix} 1&&&\\ &f_1&&\\ &&f_2&\\ &&&-f_3 \end{smallmatrix}\right]g \in U_0g, \quad\mbox{that is}, \quad
U_{\hg,\tau}\cap\left[\begin{array}{c|c} 1&\\\hline &\SU(3)\end{array}\right]\tilde{g} = U_0g,
$$
concluding the proof.
\end{proof}

Other subgroups of $G_2$ we will need to consider are
\begin{equation}\label{Ug2}
U_{\ggo_1}:=\left\{ h\in G_2 : h(\ggo_1)\subset\ggo_1\right\},
\end{equation}
where $\ggo_1:=\spann\{ e_1,e_2,e_5,e_6\}$, whose Lie algebra is well-known (see e.g.\ \cite{VanMnr}) to be given by
$$
\ug_{\ggo_1}= \left\{\left[\begin{smallmatrix}
0&c&-b&&&& \\ -c&0&a&&&& \\ b&-a&0&&& \\
&&&0&-d&b-e&-f\\ &&&d&0&-c+f&-e\\ &&&-b+e&c-f&0&-a+d\\ &&&f&e&a-d&0
\end{smallmatrix}\right] : a,b,c,d,e,f\in\RR\right\},
$$
and the corresponding subgroup stabilizing $\tau$,
\begin{equation}\label{Ug}
U_{\ggo_1,\tau}:=\left\{ h\in G_2 : h(\ggo_1)\subset\ggo_1, \; h\cdot\tau=\tau\right\},
\end{equation}
with Lie algebra,
$$
\ug_{\ggo_1,\tau}= \left\{\left[\begin{smallmatrix}
0&c&-b&&&& \\ -c&0&a&&&& \\ b&-a&0&&& \\
&&&0&-d&\unm b&-\unm c\\ &&&d&0&-\unm c&-\unm b\\ &&&-\unm b&\unm c&0&-a+d\\ &&&\unm c&\unm b&a-d&0
\end{smallmatrix}\right] : a,b,c,d\in\RR\right\}.
$$

\section{Closed $G_2$-structures}\label{closed}

A $G_2$-{\it structure} on a $7$-dimensional differentiable manifold $M$ is a differential $3$-form $\vp\in\Omega^3M$ such that $\vp_p$ is {\it positive} on $T_pM$ for any $p\in M$, that is, $\vp_p$ can be written as in \eqref{phiA} with respect to some basis $\{ e_1,\dots,e_7\}$ of $T_pM$.  Each $G_2$-structure $\vp$ defines a Riemannian metric $g$ on $M$ and an orientation $\vol\in\Omega^7M$ (unique up to scaling) as in \eqref{phi-ip}.  Thus $\vp$ also determines a Hodge star operator $\ast:\Omega M\longrightarrow\Omega M$ and the {\it Hodge Laplacian operator} $\Delta:\Omega^kM\longrightarrow\Omega^kM$, $\Delta:=d^*d+dd^*$, where $d^*:\Omega^{k+1}M\longrightarrow\Omega^kM$, $d^*=(-1)^{k+1}\ast d\ast$, is the adjoint of $d$.  The {\it torsion forms} of a $G_2$-structure $\vp$ on $M$ are the components of the {\it intrinsic torsion} $\nabla\vp$, where $\nabla$ is the Levi-Civita connection of the metric $g$.  They can be defined as the unique differential forms $\tau_i\in\Omega^iM$, $i=0,1,2,3$, such that
\begin{equation}\label{dphi}
d\vp=\tau_0\ast\vp+3\tau_1\wedge\vp+\ast\tau_3, \qquad d\ast\vp=4\tau_1\wedge\ast\vp+\tau_2\wedge\vp.
\end{equation}

Two manifolds endowed with $G_2$-structures $(M,\vp)$ and $(M',\vp')$ are called {\it equivalent} if there exists a diffeomorphism $f:M\longrightarrow M'$ such that $\vp=f^*\vp'$.

In the case of a closed $G_2$-structure $\vp$ on a $7$-manifold $M$, the only torsion that survives is a the $2$-form $\tau:=\tau_2$ and one therefore has that,
\begin{equation}\label{tauphi}
d\vp=0, \qquad \tau=d^*\vp=-\ast d\ast\vp, \qquad d\ast\vp=\tau\wedge\vp, \qquad d\tau=\Delta\vp.
\end{equation}
In particular, $\vp$ is torsion-free (or parallel) if and only if $\tau=0$.   Since $\tau\in\Omega^2_{14}M$ (see e.g.\  \cite[Proposition 1]{Bry}), all the useful conditions \eqref{L214}-\eqref{prop-tn} hold for $\tau$ at each $p\in M$.

In this paper, we study left-invariant $G_2$-structures on Lie groups, which allows us to work at the Lie algebra level as in Section \ref{preli}.  The $G_2$-structure is determined by a positive $3$-form on the Lie algebra $\ggo$, which will be most of the times the one given in \eqref{phiA}.  Two Lie groups endowed with left-invariant $G_2$-structures $(G,\vp)$ and $(G',\vp')$ are called {\it equivariantly equivalent} if there exists a Lie group isomorphism $F:G\longrightarrow G'$ such that $\vp=f^*\vp'$, where $f:=dF|_e:\ggo\longrightarrow\ggo'$ is the corresponding Lie algebra isomorphism.

\begin{definition}\label{def-GmuG2}
$(G_\mu,\vp)$ is the Lie group $G_\mu$ defined in Section \ref{Gmu} endowed with the left-invariant $G_2$-structure determined by the positive $3$-form $\vp$ on $\ggo$ given in \eqref{phiA}.
\end{definition}
Recall from \eqref{mu} that $G_\mu$ depends only on the Lie bracket $\lambda$ on $\hg=\spann\{ e_1,\dots,e_6\}$ and the map $A\in\Der(\hg,\lambda)$.

\begin{proposition}\label{closed-mu}
Any Lie group endowed with a left-invariant closed $G_2$-structure is equivariantly equivalent to $(G_\mu,\vp)$ for some $\mu=\lambda+\mu_A$ such that the ideal $(\hg,\lambda)$ is unimodular.
\end{proposition}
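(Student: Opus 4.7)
The plan is to build an orthonormal basis $\{e_1,\dots,e_7\}$ of $\ggo:=\Lie(G)$ in which $\vp$ takes the form \eqref{phiA} and such that $\hg:=\spann\{e_1,\dots,e_6\}$ is a $6$-dimensional unimodular ideal; the Lie bracket of $\ggo$ will then split as in \eqref{mu} and yield the required equivariant equivalence with some $(G_\mu,\vp)$.

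First I would locate a $6$-dim unimodular ideal of $\ggo$. The \emph{unimodular character} $\chi\in\ggo^*$ defined by $\chi(X):=\tr(\ad_\ggo X)$ is closed, since $\chi([X,Y])=\tr[\ad X,\ad Y]=0$, so $\hg:=\ker\chi$ is always an ideal of codimension at most $1$. When $\ggo$ is non-unimodular, $\hg$ has codimension exactly $1$, and it is unimodular as a Lie algebra: for $X\in\hg$ the trace $\tr\ad_\hg X$ differs from $\tr\ad_\ggo X=0$ only by the $e_7$-component of $[X,e_7]$, which vanishes because $\hg$ is an ideal. When $\ggo$ itself is unimodular I would instead take any hyperplane $\hg\supset[\ggo,\ggo]$, automatically an ideal, and unimodular by the same trace computation. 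The main obstacle I anticipate is precisely this unimodular case: its validity requires $\ggo$ to be non-perfect, which should follow from the closed $G_2$ hypothesis but is not an immediate consequence of $d\vp=0$ alone.

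With $\hg$ fixed, I would pick a unit vector $e_7\in\ggo$ normal to $\hg$ in the inner product determined by $\vp$. Since positive $3$-forms on a real $7$-dimensional vector space form a single $\Gl$-orbit with stabilizer $G_2$, there is an orthonormal basis of $\ggo$ in which $\vp$ already coincides with \eqref{phiA}, and the remaining freedom in choosing such a basis is exactly the $G_2$-action on the standard basis of $\RR^7$. Since $G_2\subset\SO(7)$ acts transitively on unit vectors of $\RR^7$, one may adjust this basis by an element of $G_2$ so that its seventh vector is precisely $e_7$; the form $\vp$ is preserved by this adjustment, and the resulting orthonormal basis $\{e_1,\dots,e_6,e_7\}$ satisfies $\vp$ as in \eqref{phiA} together with $\hg=\spann\{e_1,\dots,e_6\}=e_7^{\perp}$.

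In this basis the Lie bracket $\mu$ of $\ggo$ decomposes as $\mu=\lambda+\mu_A$ in the sense of \eqref{mu}, where $\lambda:=\mu|_{\hg\times\hg}$ is the Lie bracket of the subalgebra $\hg$ and $A:=\ad_\mu(e_7)|_\hg$ preserves $\hg$ (since $\hg$ is an ideal) and is a derivation of $\lambda$ by Jacobi. This realizes $(G,\vp)$ as equivariantly equivalent to $(G_\mu,\vp)$ of Definition \ref{def-GmuG2}, with $(\hg,\lambda)$ unimodular, as claimed.
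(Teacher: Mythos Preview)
Your approach coincides with the paper's: in the non-unimodular case you take $\hg=\ker\chi$, and then you use the transitivity of $G_2$ on $S^6$ to align $e_7$ with the unit normal to $\hg$, exactly as the paper does (implicitly).

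The obstacle you flag in the unimodular case is genuine and is \emph{not} resolved internally by the paper either. The inequality $[\ggo,\ggo]\ne\ggo$ (equivalently, the existence of a codimension-one ideal) does not follow from $d\vp=0$ by any soft argument; the paper simply cites the classification of Fino--Raffero \cite[Main Theorem]{FinRff3} (``Closed $G_2$-structures on non-solvable Lie groups''), which yields in particular that every $7$-dimensional unimodular Lie algebra admitting a closed $G_2$-structure has a codimension-one ideal. Once you invoke that external result, your proof is complete and identical to the paper's.
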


\begin{remark}
If the Lie group is not unimodular, then $\hg=\{ X\in\ggo:\tr{\ad_\mu{X}}=0\}$.  On the other hand, the pair $(\omega,\rho^+)$ defines an $\SU(3)$-structure on the Lie algebra $\hg$.
\end{remark}

\begin{proof}
Let $(G,\psi)$ be a Lie group $G$ endowed with a closed $G_2$-structure $\psi$.  If $\ggo$ is not unimodular, then we can take the codimension-one ideal $\hg$ of $\ggo$ as above and in the case when
$\ggo$ is unimodular, it follows from the classification obtained in \cite[Main Theorem]{FinRff3} that there exists a codimension-one ideal $\hg$.  Thus there exists an orthonormal basis $\{e_1,\dots,e_7\}$ of $\ggo$ such that $\hg=\spann\{e_1,\dots,e_6\}$ and $\vp$ can be written as in \eqref{phiA}.  It therefore follows that if $\lambda:=\mu|_{\hg\times\hg}$ and $A:=\ad e_7|_\hg$, then the Lie bracket $\mu$ of $\ggo$ is given by $\mu=\lambda+\mu_A$, concluding the proof.
\end{proof}

We therefore focus from now on on $G_2$-structures of the form $(G_\mu,\vp)$.  According to Lemma \ref{tech3}, (i), for $(G_\mu,\vp)$ one has that,
\begin{align}
d_\mu\vp =& d_\lambda\rho^+ + d_\lambda\omega\wedge e^7 - \theta(A)\rho^+\wedge e^7, \label{dphi}  \\
d_\mu\ast\vp =& d_\lambda\omega\wedge\omega + d_\lambda\rho^-\wedge e^7 + \theta(A)\omega\wedge\omega\wedge e^7. \label{destphi}
\end{align}
Thus $(G_\mu,\vp)$ is closed if and only if the following two conditions hold:
\begin{equation}\label{muclosed}
d_\lambda\omega=\theta(A)\rho^+, \qquad d_\lambda\rho^+=0.
\end{equation}
We now compute the torsion in terms of $\lambda$ and $A$, which is the only data varying.

\begin{proposition}\label{tauLA}
The torsion $2$-form $\tau_\mu$ of a closed $G_2$-structure $(G_\mu,\vp)$ is given by $\tau_\mu=\tau_\lambda+\tau_A$, where
$$
\tau_\lambda:=-\ast_\hg(d_\lambda\omega\wedge\omega)\wedge e^7 -\ast_\hg d_\lambda\rho^-, \qquad \tau_A:=(\tr{A})\omega + \theta(A^t)\omega.
$$
Furthermore, $d_\lambda\omega\wedge\omega = -\theta(A)\omega\wedge\rho^+$.
\end{proposition}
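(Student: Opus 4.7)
The plan is to compute $\tau_\mu = -\ast d_\mu\ast\vp$ (using \eqref{tauphi}) by starting from the formula for $d_\mu\ast\vp$ already recorded in \eqref{destphi}, namely
$$
d_\mu\ast\vp = d_\lambda\omega\wedge\omega + d_\lambda\rho^-\wedge e^7 + \theta(A)\omega\wedge\omega\wedge e^7,
$$
and then applying $-\ast$ to each term using Lemma \ref{tech1}. The first term lives in $\Lambda^5\hg^*$ (no $e^7$-factor), so by part (i) it produces $-\ast_\hg(d_\lambda\omega\wedge\omega)\wedge e^7$; the second is of the form $\gamma\wedge e^7$ with $\gamma\in\Lambda^4\hg^*$, so by part (ii) it yields $-\ast_\hg d_\lambda\rho^-$. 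Together these give exactly $\tau_\lambda$. The third term (also of type $\gamma\wedge e^7$ with $\gamma\in\Lambda^4\hg^*$) gives $-\ast_\hg(\theta(A)\omega\wedge\omega)$, and the main task is to rewrite this as $\tau_A=(\tr A)\omega+\theta(A^t)\omega$.

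For that third term, I first rewrite $\theta(A)\omega\wedge\omega=\tfrac12\theta(A)(\omega\wedge\omega)=\theta(A)\ast_\hg\omega$ using the derivation property of $\theta(A)$ and Lemma \ref{tech1}(iii). Then I apply Lemma \ref{tech1}(vi) with the 2-form $\omega$ to get
$$
\theta(A)\ast_\hg\omega = -\ast_\hg\theta(A^t)\omega - (\tr A)\ast_\hg\omega,
$$
apply $-\ast_\hg$ once more, and use $\ast_\hg^2=\mathrm{id}$ on $\Lambda^2\hg^*$ (part (iv) with $k=2$) to land precisely on $\theta(A^t)\omega+(\tr A)\omega=\tau_A$. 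Adding the three contributions gives $\tau_\mu=\tau_\lambda+\tau_A$.

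For the \emph{furthermore} claim, I would exploit closedness in the form \eqref{muclosed}, i.e.\ $d_\lambda\omega=\theta(A)\rho^+$, to replace $d_\lambda\omega\wedge\omega$ by $\theta(A)\rho^+\wedge\omega$. Then the identity $\omega\wedge\rho^+=0$ (already noted after \eqref{phiA}) combined with the Leibniz rule for the derivation $\theta(A)$ yields
$$
0=\theta(A)(\omega\wedge\rho^+)=\theta(A)\omega\wedge\rho^+ + \omega\wedge\theta(A)\rho^+,
$$
and since $\omega$ is a $2$-form and $\theta(A)\rho^+$ is a $3$-form the wedge commutes, giving $\theta(A)\rho^+\wedge\omega=-\theta(A)\omega\wedge\rho^+$, which is the stated identity.

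None of the steps look genuinely hard; the only place where one must be careful is the sign bookkeeping in Lemma \ref{tech1}(vi) and the $\ast_\hg^2$ rule, so I would double-check those signs on $\Lambda^2\hg^*$ before finalizing the computation. The rest is essentially unpacking definitions and using that $\theta(A)$ is a graded derivation.
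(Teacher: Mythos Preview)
Your proposal is correct and follows essentially the same route as the paper's proof: both compute $\tau_\mu=-\ast d_\mu\ast\vp$ term by term from \eqref{destphi} using Lemma \ref{tech1}, rewriting the $A$-part via $\theta(A)\omega\wedge\omega=\theta(A)\ast_\hg\omega$ and then applying part (vi). Your treatment of the ``furthermore'' identity via the Leibniz rule applied to $\omega\wedge\rho^+=0$ is actually more explicit than the paper's one-line argument, and your sign bookkeeping there is correct.
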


\begin{proof}
It follows from \eqref{phiA} that
\begin{align*}
\ast d_A\ast\vp =& \ast\left(\unm\theta(A)(\omega\wedge\omega)\wedge e^7\right) = \ast\left(\theta(A)\ast_\hg\omega\wedge e^7\right) \\
=& \ast\left(-(\tr{A})\ast_\hg\omega\wedge e^7-\ast_\hg\theta(A^t)\omega\wedge e^7\right) = -(\tr{A})\omega-\theta(A^t)\omega,
\end{align*}
and
$$
d_\lambda\omega\wedge\omega = \theta(A)\rho^+\wedge\omega = \theta(A)\omega\wedge\rho^+.
$$
Using Lemma \ref{tech1}, (v) we now compute,
\begin{align*}
d_\mu\ast\vp =& d_\lambda\omega\wedge\omega + d_\lambda\rho^-\wedge e^7 + d_A\ast\vp, \\
\ast d_\mu\ast\vp =& \ast(d_\lambda\omega\wedge\omega) + \ast_\hg d_\lambda\rho^- + \ast d_A\ast\vp \\
=& \ast_\hg(d_\lambda\omega\wedge\omega)\wedge e^7 + \ast_\hg d_\lambda\rho^- - (\tr{A})\omega - \theta(A^t)\omega,
\end{align*}
from which the desired formula follows.
\end{proof}

Straightforwardly, one obtains that the above proposition and Lemma \ref{tech3} give that for any closed $G_2$-structure $(G_\mu,\vp)$,
\begin{align}
d_\mu\tau_\mu =& - d_\lambda \ast_\hg (d_\lambda \omega \wedge \omega) \wedge e^7 - d_\lambda \ast_\hg d_\lambda \ast_\hg \rho^+ - \theta(A) \ast_\hg d_\lambda \rho^- \wedge e^7 \label{DeltaLA} \\
&+ (\tr{A}) d_\lambda \omega+ (\tr{A}) \theta(A) \omega \wedge e^7 + \theta(A) \theta(A^t) \omega \wedge e^7 + d_\lambda\theta(A^t)\omega. \notag
\end{align}

The following result shows that two left-invariant $G_2$-structures on non-isomorphic Lie groups can indeed be equivalent, in spite of they are not equivariantly equivalent.  This generalizes \cite[Proposition 5.6]{LF} beyond the almost-abelian case and the proof also follows the lines of  \cite[Proposition 2.5]{Hbr2}.

\begin{proposition}\label{equivA}
Let $(G_\mu,\vp)$ be a $G_2$-structure as above with $\mu=\lambda+\mu_A$.   If $D\in\sug(3)\cap\Der(\hg,\lambda)$, $[D,A]=0$ and we set $\mu_1:=\lambda+\mu_{A+D}$, then the $G_2$-structures $(G_\mu,\vp)$ and $(G_{\mu_1},\vp)$ are equivalent.
\end{proposition}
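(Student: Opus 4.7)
The strategy is to observe that both $G_\mu$ and $G_{\mu_1}$ share a common underlying smooth manifold, namely $H_\lambda\times\RR$, via their semidirect-product realizations, and to show that under this identification the two left-invariant $3$-forms $\vp_\mu$ and $\vp_{\mu_1}$ coincide pointwise. The desired diffeomorphism $F:G_\mu\to G_{\mu_1}$ is then the ``identity'' in the $(h,s)$-coordinates. This will typically not be a Lie group homomorphism -- indeed $G_\mu$ and $G_{\mu_1}$ are usually not even isomorphic as Lie groups -- so one should resist the temptation to look for a Lie isomorphism.

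First, since $A,A+D\in\Der(\hg,\lambda)$, both exponentiate to one-parameter groups $\Phi^A_s,\Phi^{A+D}_s\in\Aut(H_\lambda)$ whose derivatives at the identity are $e^{sA},e^{s(A+D)}$, and the group laws of $G_\mu=H_\lambda\rtimes_{\Phi^A}\RR$ and $G_{\mu_1}=H_\lambda\rtimes_{\Phi^{A+D}}\RR$ then provide identifications of both groups, as manifolds, with $M:=H_\lambda\times\RR$. A direct computation with these semidirect-product formulas gives the left-invariant frames
\[
X_i^{(\mu)}|_{(h,s)}=(dL_h)_e(e^{sA}e_i), \qquad X_i^{(\mu_1)}|_{(h,s)}=(dL_h)_e(e^{s(A+D)}e_i), \quad 1\le i\le 6,
\]
together with $X_7^{(\mu)}=X_7^{(\mu_1)}=\partial_s$. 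The assumption $[A,D]=0$ now yields $e^{s(A+D)}=e^{sA}e^{sD}$, producing the crucial relation
\[
X_i^{(\mu_1)}|_{(h,s)}=\sum_j(e^{sD})_{ji}\,X_j^{(\mu)}|_{(h,s)}
\]
on $M$; so the two frames differ by the $s$-dependent linear change $e^{sD}\in\Gl(\hg)$, extended to $\ggo$ by fixing $e_7$.

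Next, the hypothesis $D\in\sug(3)$ means exactly that $D$, extended by $De_7=0$, lies in $\ggo_2$, since $\sug(3)$ sits inside $\ggo_2$ as the stabilizer of $e_7$. Hence $e^{sD}\in G_2$ and therefore preserves $\vp$. Evaluating $\vp_{\mu_1}|_{(h,s)}$ on the basis $\{X_i^{(\mu)}\}$ by inverting the linear relation above, using left-invariance of $\vp_{\mu_1}$ to read off components $\vp_{abc}$ on the $\{X_i^{(\mu_1)}\}$ frame, and then applying the $G_2$-invariance of $\vp$ under $e^{-sD}$, one finds that the result is simply $\vp_{ijk}$; this matches $\vp_\mu|_{(h,s)}(X_i^{(\mu)},X_j^{(\mu)},X_k^{(\mu)})$ by left-invariance of $\vp_\mu$. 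A parallel check handles indices involving $X_7$. Thus $\vp_\mu=\vp_{\mu_1}$ as $3$-forms on $M$, and the identity map in the $(h,s)$-coordinates realizes the desired equivalence.

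The proof is essentially bookkeeping once the right viewpoint is adopted; the main obstacle is conceptual rather than computational. The role of each hypothesis is then transparent: $D\in\Der(\hg,\lambda)$ is used so that $\Phi^D_s\in\Aut(H_\lambda)$ exists; $[A,D]=0$ is used so that $e^{sA}$ and $e^{sD}$ commute, giving the clean linear relation between the two frames; and $D\in\sug(3)$ is used so that this relation acts by $G_2$-transformations and hence preserves $\vp$.
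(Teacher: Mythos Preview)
Your argument is correct. Both approaches exploit the same underlying mechanism---the ``twist'' by $e^{sD}\in G_2$---but package it differently. The paper works abstractly: it builds a subgroup $G_1:=\{L_s\circ\alpha(s):s\in G_{\mu_1}\}\subset\Aut(G_{\mu_1},\vp)$, where $\alpha$ exponentiates $-D$, checks that $G_1$ acts simply transitively on $G_{\mu_1}$, and then computes that the Lie algebra of $G_1$ is isomorphic to $(\ggo,\mu)$. Your approach is more concrete: you realize both groups as the single manifold $H_\lambda\times\RR$, write down the two left-invariant frames explicitly, observe they differ by the $G_2$-element $e^{sD}$, and conclude that the two left-invariant $3$-forms literally coincide on this manifold. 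Your route is more elementary and makes the equivalence very explicit (the identity in $(h,s)$-coordinates); the paper's route sits naturally in the general framework of simply transitive groups of automorphisms and generalizes verbatim to the Riemannian statement noted in the remark following the proof. One small quibble: the hypothesis $D\in\Der(\hg,\lambda)$ is really used so that $A+D\in\Der(\hg,\lambda)$, i.e.\ so that $\mu_1$ satisfies Jacobi; your argument never actually needs the automorphism $\Phi^D_s$ of $H_\lambda$ on its own, only the linear map $e^{sD}$ acting on $\hg$.
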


\begin{remark}\label{equivA-rem}
The hypothesis on the matrix $D$ means precisely that
$$
\left[\begin{array}{cc}
 D & 0 \\
0 & 0
\end{array}\right] \in \ggo_2\cap\Der(\ggo,\mu), \; \ggo_2\cap\Der(\ggo,\mu_1).
$$
Note that the Lie groups $G_\mu$, $G_{\mu_1}$ are in general not isomorphic.  For instance, if $\mu$ is not unimodular, then the spectra of $D$ and $A$ must coincide up to scaling in order to $\mu$ and $\mu_1$ be isomorphic.
\end{remark}

\begin{proof}
Let $\ggo_\mu$ denote the Lie algebra $(\ggo,\mu)$ of $G_\mu$.  We  consider the Lie group
$$
F:=\Aut(G_{\mu_1})\cap\Aut(G_{\mu_1},\vp)\simeq\Aut(\ggo_{\mu_1})\cap G_2,
$$
with Lie algebra $\fg:=\Der(\ggo_{\mu_1})\cap\ggo_2$, the homomorphism $\alpha:\ggo_{\mu_1}\longrightarrow\fg$ defined by
$$
\alpha(e_7)=
\left[\begin{array}{cc}
 -D  & 0 \\
0 & 0
\end{array}\right], \qquad \alpha|_\hg\equiv 0,
$$
and denote also by $\alpha$ the corresponding Lie group homomorphism $G_{\mu_1}\longrightarrow F$.   If $L:G_{\mu_1}\longrightarrow\Aut(G_{\mu_1},\vp)$ is the left-multiplication morphism, then
$$
G_1:=\{ L_s\circ\alpha(s):s\in G_{\mu_1}\}\subset\Aut(G_{\mu_1},\vp),
$$
is a subgroup.  Indeed, using that $G_{\mu_1}=\exp{\RR e_7}\ltimes\exp{\hg}$, one has for $s=ah$, $t=bg$ that,
\begin{align*}
L_s\circ\alpha(s)\circ L_t\circ\alpha(t) =& L_s\circ L_{\alpha(s)(t)}\alpha(s)\alpha(b) = L_{s\alpha(s)(t)}\alpha(s)\alpha(b\alpha(a)(g)) \\
=& L_{s\alpha(s)(t)}\alpha(s)\alpha(\alpha(a)(b)\alpha(a)(g)) = L_{s\alpha(s)(t)}\alpha(s\alpha(s)(t)).
\end{align*}
Thus $G_1$ is a connected and closed Lie subgroup of $\Aut(G_{\mu_1},\vp)$ since $s\mapsto L_s\circ\alpha(s)$ is continuous and proper.  Furthermore, $G_1$ acts simply and transitively on $G_{\mu_1}$ by automorphisms of $\vp$, so the diffeomorphism $f:G_1\longrightarrow G_{\mu_1}$, $f(L_s\circ\alpha(s)) := (L_s\circ\alpha(s))(e)=s$ defines an equivalence between the left-invariant $G_2$-structures $(G_1,f^*\vp)$ and $(G_{\mu_1},\vp)$.  On the other hand, the Lie algebra of $G_1$ is given by
$$
\ggo_0:=\left\{ dL|_eX+\alpha(X):X\in\ggo\right\} \subset \Lie\left(\Aut(G_{\mu_1},\vp)\right),
$$
and if $X=X_\hg+ae_7$, $Y=Y_\hg+be_7$ belong to $\ggo$, then
\begin{align*}
[dL|_eX+&\alpha(X),dL|_eY+\alpha(Y)] = dL|_e\mu_1(X,Y) + dL|_e\alpha(X)Y - dL|_e\alpha(Y)X + \alpha([X,Y]) \\
=& dL|_e\left(a(A+D)Y_\hg-b(A+D)X_\hg +\lambda(X_\hg,Y_\hg)- aDY_\hg + bDX_\hg + 0\right) \\
=& dL|_e\left(aDY_\hg-bDX_\hg +\lambda(X_\hg,Y_\hg)\right)
= dL|_e\mu(X,Y) = (dL|_e+\alpha)\mu(X,Y).
\end{align*}
This shows that $df|_e^{-1}=dL|_e+\alpha:\ggo_\mu\longrightarrow\ggo_0$ is a Lie algebra isomorphism and so $(G_\mu,\vp)$ is equivalent to $(G_1,f^*\vp)$, concluding the proof.
\end{proof}

\begin{remark}
By replacing $\vp$ by an inner product $\ip$ on $\ggo$, $\Aut(G_{\mu_1},\vp)$ by $\Iso(G_{\mu_1},\ip)$ and $G_2$ by $\Or(\ggo,\ip)$, the following Riemannian version can be proved in exactly the same way as above for any dimension: $(G_\mu,\ip)$ is isometric to $(G_{\mu_1},\ip)$ for any $\mu=\lambda+\mu_A$, $\mu_1=\lambda+\mu_{A+D}$ such that $D\in\sog(\hg,\ip)\cap\Der(\hg,\lambda)$ and $[D,A]=0$.
\end{remark}

As an application of Proposition \ref{equivA}, one obtains that the one-parameter family of extremally Ricci pinched $G_2$-structures given in \cite[Example 6.4]{FinRff2} is pairwise equivalent.

\subsection{ERP $G_2$-structures}\label{erp-subsec}
The following nice estimate for a closed $G_2$-structure $\vp$ on a compact manifold $M$ was proved by R. Bryant (see \cite[Corollary 3]{Bry}):
$$
\int_M \scalar^2 \ast 1 \leq 3\int_M |\Ricci|^2 \ast 1,
$$
and equality holds if and only if
\begin{equation}\label{erp-def-2}
d\tau = \frac{1}{6}|\tau|^2\vp + \frac{1}{6}\ast(\tau\wedge\tau).
\end{equation}
The factor of $3$ on the right hand side, being much smaller than $7$, shows that the metric is always far from being Einstein.

\begin{definition}\label{erp-def}
The distinguished closed $G_2$-structures for which condition \eqref{erp-def-2} holds and $\tau\ne 0$ were called {\it extremally Ricci-pinched} (ERP for short) in \cite[Remark 13]{Bry}.
\end{definition}

We begin with some general results on such structures.

\begin{proposition}\label{erp-prop}\cite{Bry}
Let $(M,\vp)$ be a manifold endowed with an ERP $G_2$-structure and assume that it is locally homogeneous.  Then,
\begin{itemize}
\item[(i)] $\tau\wedge\tau\wedge\tau=0$.
\item[(ii)] $d(\tau\wedge\tau)=0$.
\item[(iii)] $d\ast(\tau\wedge\tau)=0$.
\item[(iv)] $\Ricci|_P=-\frac{1}{6}|\tau|^2I$, $\Ricci|_Q=0$ and $\la\Ricci P,Q\ra=0$, where
$$
P:=\{ X\in TM:\iota_X(\tau\wedge\tau)=0\}, \qquad Q:=\{ X\in TM:\iota_X\ast(\tau\wedge\tau)=0\},
$$
and $\dim{P}=3$, $\dim{Q}=4$.
\end{itemize}
\end{proposition}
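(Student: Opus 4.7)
Our plan is to establish (iii), then (i) and (iv) together, and finally (ii).

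For (iii), we differentiate the ERP identity \eqref{erp-def-2}. Since $d\vp=0$ and $|\tau|^2$ is a pointwise $G_2$-invariant of $\tau$, hence constant on the locally homogeneous manifold $(M,\vp)$, we obtain
$$0 = d(d\tau) = \tfrac{1}{6}d|\tau|^2\wedge\vp + \tfrac{1}{6}|\tau|^2\, d\vp + \tfrac{1}{6}d\ast(\tau\wedge\tau) = \tfrac{1}{6}d\ast(\tau\wedge\tau),$$
which is exactly (iii).

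For (i) and (iv), we work pointwise in an oriented orthonormal basis adapted to $\vp$ and $\tau$, so that $\vp$ takes the form \eqref{phiA} and $\tau=a\,e^{12}+b\,e^{34}+c\,e^{56}$ with $a+b+c=0$ and $a\ge b\ge 0>c$ (cf.\ \eqref{tau-normal}). The key input is Bryant's formula expressing the Ricci tensor of a closed $G_2$-structure as a linear combination of $|\tau|^2 g$, the square of $\tau$ as an endomorphism, and the image of $d\tau$ under the natural $G_2$-equivariant map from $3$-forms to symmetric $2$-tensors; see \cite[\S 4]{Bry}. Substituting the ERP expression for $d\tau$ makes $\Ricci$ a diagonal matrix in this basis whose entries are explicit quadratic functions of $a,b,c$. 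The Weitzenb\"ock-type identity underlying Bryant's Ricci-pinching estimate, together with the trace identity $\scalar=-\tfrac12|\tau|^2$, then forces in the locally homogeneous setting an algebraic consistency condition on $(a,b,c)$ that is satisfied only when at least one of $a,b,c$ vanishes; the normalization $a\ge b\ge 0>c$ then imposes $b=0$ and $c=-a$, proving (i). In this normal form $\tau\wedge\tau=-2a^2 e^{1256}$ and $\ast(\tau\wedge\tau)=-2a^2 e^{347}$, so a direct inspection of interior products gives $P=\spann\{e_3,e_4,e_7\}$ and $Q=\spann\{e_1,e_2,e_5,e_6\}$ of the claimed dimensions $3$ and $4$, with $T_pM=P\oplus Q$. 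Substituting $b=0$, $c=-a$ into the diagonal expression for $\Ricci$ then shows that $P$ is the $-\tfrac{1}{6}|\tau|^2$-eigenspace and $Q$ is the $0$-eigenspace of $\Ricci$, yielding $\Ricci|_P=-\tfrac16|\tau|^2 I$, $\Ricci|_Q=0$, and $\la\Ricci P,Q\ra=0$, completing (iv).

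Finally, for (ii), since $\tau\in\Lambda^2_{14}$ implies $\tau\wedge\vp=-\ast\tau$ and $d(\tau\wedge\tau)=2\tau\wedge d\tau$, the ERP identity gives
$$d(\tau\wedge\tau)=-\tfrac{|\tau|^2}{3}\ast\tau+\tfrac{1}{3}\tau\wedge\ast(\tau\wedge\tau),$$
and a direct computation in the adapted basis (using Lemma \ref{tech1}) rewrites the right-hand side as a constant multiple of $abc\,\ast\omega$, with $\omega=e^{12}+e^{34}+e^{56}$; this vanishes by (i). The main obstacle is the second step: extracting from Bryant's closed-$G_2$ Ricci formula and the underlying Weitzenb\"ock identity the forcing $b=0$ in the locally homogeneous setting. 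Once this is established, the remaining claims reduce to pointwise linear-algebra computations via Lemma \ref{tech1}.
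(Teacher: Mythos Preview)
Your argument for (iii) is fine and matches the paper's: differentiate the ERP identity and use that $|\tau|^2$ is constant by local homogeneity.

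The gap is in your proof of (i). You claim that ``the Weitzenb\"ock-type identity underlying Bryant's Ricci-pinching estimate, together with the trace identity $\scalar=-\tfrac12|\tau|^2$, then forces\dots an algebraic consistency condition on $(a,b,c)$'' giving $b=0$, but you never write down what this condition is or why it holds, and you explicitly flag this step as the ``main obstacle''. In fact, substituting the ERP expression for $d\tau$ into the Ricci formula for closed $G_2$-structures (as in \cite[(16)]{LS-ERP} with $q=\tfrac16$) only yields $\Ricci$ as a diagonal matrix with entries determined by $a,b,c$; it does \emph{not} produce any further constraint on $a,b,c$ beyond $a+b+c=0$. There is no extra Weitzenb\"ock input to appeal to here: the Ricci-pinching estimate in \cite[Corollary~3]{Bry} is an integral inequality on compact manifolds, not a pointwise identity, so it does not apply in the locally homogeneous setting you are working in.

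The paper's route is different and more direct: it cites Bryant's differential identities \cite[(4.51), (4.53), (4.55)]{Bry}, each of which expresses the relevant quantity ($d\ast(\tau\wedge\tau)$, $\tau\wedge\tau\wedge\tau$, $d(\tau\wedge\tau)$) in terms that vanish once $d|\tau|^2=0$. In particular, \cite[(4.53)]{Bry} gives $\tau^3=0$ directly from constancy of $|\tau|^2$, without passing through the Ricci tensor. Since your arguments for (ii) and (iv) both rest on (i), closing this gap---by either locating and invoking Bryant's identity (4.53), or deriving an equivalent differential identity yourself relating $\tau^3$ to $d|\tau|^2$---is essential. Once (i) is established, your treatment of (iv) (diagonalizing $\tau$, reading off $P,Q$, and computing $\Ricci$ in the adapted frame) and of (ii) (reducing $d(\tau\wedge\tau)$ to a multiple of $abc$) is correct and in line with the paper.
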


\begin{proof}
Parts (i), (ii) and (iii) follow from \cite[(4.53)]{Bry}, \cite[(4.55)]{Bry} and \cite[(4.51)]{Bry}, respectively, and the fact that $d|\tau|^2=0$ since
$M$ is locally homogeneous.  If we write $\tau$ as in \eqref{tau-normal} at each $p\in M$, then $b=0$ and $c=-a$ must hold, since
$\tau\wedge\tau\wedge\tau=0$ by (i), and so $\tau=a (e^{12}- e^{56})$. To prove part (iv), we consider the formula  given in \cite[(16)]{LS-ERP} for
$q=\tfrac{1}{6}$, so in terms  of the ordered basis $\{ e_7,e_3,e_4,e_1,e_2,e_5,e_6\}$,
\begin{align*}
\Ricci=&-\frac{1}{6}|\tau|^2I-\frac{1}{3}\tau^2=-\frac{1}{3}a^2I-\frac{1}{3}\Diag(-a^2,-a^2,0,0,-a^2,-a^2,0)\\
      =&-\frac{a^2}{3}\Diag(0,0,1,1,0,0,1)=-\frac{1}{6}|\tau|^2\Diag(0,0,1,1,0,0,1).
\end{align*}
As $\tau\wedge\tau=-2ae^{1256}$ and $\ast(\tau\wedge\tau)=-2ae^{347}$, it follows that $P=\spann\{e_7,e_3,e_4\}$, $Q=\spann\{e_1,e_2,e_5,e_6\}$ and therefore (iv) holds, concluding the proof.
\end{proof}

\section{Structure}\label{erp}

Our aim in this section is to discover and prove structural results for extremally Ricci-pinched $G_2$-structures on Lie groups.

Recall from Section \ref{closed} that the Lie groups endowed with a $G_2$-structure of the form $(G_\mu,\vp)$ (see Definition \ref{def-GmuG2}) cover the whole closed case up to equivariant equivalence (see Proposition \ref{closed-mu}).   The Lie algebra of $G_\mu$ decomposes as $\ggo=\RR e_7\oplus\hg$, where $\hg=\spann\{ e_1,\dots,e_6\}$ is a unimodular ideal, and $\vp$ is always given as in \eqref{phiA}.

The following proposition shows that under the ERP condition, the torsion $2$-form can be diagonalized in a very convenient way relative to the Lie algebra structure, which is certainly the starting point toward the structure results we will obtain in this section.

\begin{proposition}\label{ERP-mu}
Any Lie group endowed with a left-invariant ERP $G_2$-structure is equivariantly equivalent to $(G_\mu,\vp)$, up to scaling, for some $\mu=\lambda+\mu_A$ with $(\hg,\lambda)$ unimodular and $\tau_\mu=e^{12}-e^{56}$.
\end{proposition}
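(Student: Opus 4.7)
The plan is to combine the closed-case reduction from Proposition \ref{closed-mu} with the ERP-specific normal form for the torsion. Since any ERP $G_2$-structure is in particular closed, one may first equivariantly pass to a structure of the form $(G_{\mu_0},\vp)$ with $\mu_0=\lambda_0+\mu_{A_0}$, $\hg=\spann\{e_1,\ldots,e_6\}$ a codimension-one unimodular ideal, and $\vp$ in the standard form \eqref{phiA}. The torsion is then some nonzero $\tau\in\Lambda^2_{14}\ggo^*$.

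By Proposition \ref{erp-prop}(i), the ERP hypothesis forces $\tau\wedge\tau\wedge\tau=0$. Combined with the diagonalization \eqref{tau-normal} of elements of $\Lambda^2_{14}\ggo^*$, the eigenvalue triple $(a,b,c)$ must be $(a,0,-a)$ with $a>0$, so there exists an oriented orthonormal basis $\{f_1,\ldots,f_7\}$ of $\ggo$ in which $\vp$ takes the standard form and $\tau=a(f^{12}-f^{56})$. Rescaling the $G_2$-structure normalises $a=1$.

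The crux is to merge these two basis choices into a single one realising $\vp$ in standard form, $\tau=e^{12}-e^{56}$, and $\hg=\spann\{e_1,\ldots,e_6\}$ simultaneously. Introduce the intrinsic subspaces
\[
P:=\{X\in\ggo:\iota_X(\tau\wedge\tau)=0\}=\spann\{f_3,f_4,f_7\},\qquad Q:=P^\perp=\spann\{f_1,f_2,f_5,f_6\},
\]
which by Proposition \ref{erp-prop}(iv) are, respectively, the $-\tfrac{1}{6}|\tau|^2$-eigenspace and the kernel of $\Ricci$. Let $X_0\in\hg^\perp$ be a unit vector; the key claim is that $X_0\in P$. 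Granting this, $P$ is an associative 3-plane for $\vp$ (indeed $\vp|_P=f^{347}$ equals its Riemannian volume form), so the stabiliser of $P$ in $G_2$ is isomorphic to $\SO(4)$ and acts on $P$ via its $\SO(3)$-quotient. Because $\tau|_Q$ is anti-self-dual on $Q$ (its Hodge star equals $-\tau|_Q$), the further subgroup fixing $\tau$ still surjects onto this $\SO(3)$ and thus acts transitively on unit vectors of $P$. Picking $h$ in this subgroup with $h(X_0)=f_7$ and relabelling $e_i:=h(f_i)$ then produces the required basis.

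The anticipated main obstacle is verifying $X_0\in P$. In the non-unimodular case $X_0$ is forced to lie along the mean curvature vector $H$ (defined by $\langle H,X\rangle=\tr\ad X$), so the claim reduces to $\Ricci H=-\tfrac{1}{6}|\tau|^2 H$. I would derive this from the ERP-Ricci identity $\Ricci=-\tfrac{1}{6}|\tau|^2 I-\tfrac{1}{3}\tau^2$ used in the proof of Proposition \ref{erp-prop}(iv), combined with a direct computation of $\tau^2 H$ from the closedness equations \eqref{muclosed} and the structure $\mu=\lambda+\mu_A$ of Section \ref{Gmu}. In the unimodular case the freedom in choosing a codimension-one ideal $\hg$ should suffice to place $X_0\in P$ directly.
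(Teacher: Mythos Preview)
Your outline correctly isolates the crux: once the unit normal $X_0$ to a codimension-one unimodular ideal is known to lie in $P=\ker\tau$, the subgroup $U_{\ggo_1,\tau}\subset G_2$ (which does surject onto $\SO(P)$, as you argue) rotates $X_0$ to $e_7$ and the proof is finished. This is exactly the paper's endgame in the non-unimodular case. The genuine gap is your proposed verification of $X_0\in P$.

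In the non-unimodular case, with $\mu=\lambda+\mu_A$ and $X_0=e_7$, Proposition~\ref{tauLA} gives the $e^7$-component of $\tau_\mu$ as $-\ast_\hg(d_\lambda\omega\wedge\omega)\wedge e^7$. Hence your ``direct computation of $\tau^2H$'' reduces precisely to showing $d_\lambda\omega\wedge\omega=0$, and this does \emph{not} follow from the closedness equations \eqref{muclosed} alone. The paper does not attempt such a computation; it reverses your order. It first diagonalises $\tau$ (so $P=\spann\{e_3,e_4,e_7\}$, $Q=\spann\{e_1,e_2,e_5,e_6\}$), then reads the ERP equation $d\tau=\tfrac{1}{3}\vp-\tfrac{1}{3}e^{347}$ componentwise to obtain $d_{\ggo_1}\tau=0$ and $\theta(\ad e_i|_{Q})\tau=\tfrac{1}{3}\omega_i$ for $i=3,4,7$. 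Since these $\ad e_i|_Q$ are derivations of $Q$, it follows that $\tau,\omega_3,\omega_4,\omega_7$ are all $d_Q$-closed, which forces the subalgebra $Q$ to be abelian. Combined with $\tr(\ad e_i|_P)=0$ for $e_i\in Q$ (obtained from $de^{347}=0$), this yields $Q\subset\ug=\{X:\tr\ad X=0\}$, hence $X_0\in Q^\perp=P$. The half-flatness $d_\lambda\omega\wedge\omega=0$ is then a \emph{consequence}, not an input.

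In the unimodular case, ``the freedom in choosing $\hg$'' is not an argument, and the paper does not invoke Proposition~\ref{closed-mu} here at all. Having diagonalised $\tau$, it quotes \cite[Theorem 6.7]{FinRff2} for the solvability of $\ggo$, then applies \cite[Lemma 1]{Dtt} ($\Ricci\le 0$ with $\ker\Ricci=Q$ forces the nilradical into $Q$) to get $[\ggo,\ggo]\subset\ngo\subset Q$. Thus $\hg:=\spann\{e_1,\dots,e_6\}$ in the $\tau$-diagonalising basis is already an ideal, and no merging of two bases is required.
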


\begin{remark}
The $\SU(3)$-structure $(\omega,\rho^+)$ on the Lie algebra $\hg$ is therefore {\it half-flat}, i.e.\ $d_\lambda\omega\wedge\omega=0$ and $d_\lambda\rho^+ =0$ (see Proposition \ref{tauLA} and \eqref{muclosed}).
\end{remark}

\begin{proof}
Let $(G,\vp)$ be a Lie group endowed with a left-invariant ERP $G_2$-structure.  Consider the basis $\{ e_1,\dots,e_7\}$ of the Lie algebra $\ggo$ of $G$
such that $\vp$ has the form \eqref{phiA}.  Since the torsion form of $(G,\vp)$ $\tau$ belongs to $\Lambda^2_{14}\ggo^*$, it follows from
\eqref{tau-normal} and Proposition \ref{erp-prop}, (i) that it can be assumed to be given by $\tau= e^{12}-e^{56}$ (up to scaling).  As a first
consequence, $de^{347}=0$ by Proposition \ref{erp-prop}, (iii) and so if $c_{ijk}:=\la [e_i,e_j],e_k\ra$, then
\begin{align*}
0 =& de^{347} = de^3\wedge e^{47} - e^3\wedge de^4\wedge e^7 + e^{34}\wedge de^7 \\
=& -\sum_{i=1,2,5,6} c_{i33}e^{i347} -\sum_{i=1,2,5,6} c_{i44}e^{i347} -\sum_{i=1,2,5,6} c_{i77}e^{i347} \\
=& -\sum_{i=1,2,5,6} \tr{\ad{e_i}|_{\ggo_0}}e^{i347},
\end{align*}
which implies that
\begin{equation}\label{trg0}
\tr{\ad{e_i}|_{\ggo_0}}=0, \qquad\forall i=1,2,5,6,
\end{equation}
where $\ggo_0:=\spann\{ e_3,e_4,e_7\}$ is a Lie subalgebra by Proposition \ref{erp-prop}, (ii).  On the other hand, $\ggo_1:=\spann\{ e_1,e_2,e_5,e_6\}$ is also a subalgebra (see Proposition \ref{erp-prop}, (iii)) and hence by using Lemma \ref{tech3}, (i), we obtain that
$$
d\tau = \left(\theta(\ad{e_3}|_{\ggo_1})\tau\right)\wedge e^3 + \left(\theta(\ad{e_4}|_{\ggo_1})\tau\right)\wedge e^4 + \left(\theta(\ad{e_7}|_{\ggo_1})\tau\right)\wedge e^7 + d_{\ggo_1}\tau,
$$
where $d_{\ggo_1}:\Lambda^k\ggo_1^* \rightarrow \Lambda^{k+1}\ggo_1^*$ denotes the exterior derivative of $\ggo_1$.  But the ERP condition on $(G,\vp)$ reads $d\tau = \frac{1}{3}\vp - \frac{1}{3}e^{347}$, so it follows from \eqref{phi-intro} that $d_{\ggo_1}\tau=0$ and
$$
\theta(\ad{e_3}|_{\ggo_1})\tau =  \frac{1}{3}\omega_3, \qquad  \theta(\ad{e_4}|_{\ggo_1})\tau =  \frac{1}{3}\omega_4, \qquad \theta(\ad{e_7}|_{\ggo_1})\tau =  \frac{1}{3}\omega_7.
$$
This implies that the $2$-forms $\tau,\omega_3,\omega_4,\omega_7$ are all closed on the $4$-dimensional Lie algebra $\ggo_1$ by using that the maps $\ad{e_i}|_{\ggo_1}$ are derivations of $\ggo_1$ (see Lemma \ref{tech3}, (iii)), from which it is easy to see with the help of a computer that $\ggo_1$ is abelian.  From this and \eqref{trg0}, we obtain that $\ggo_1$ is contained in the ideal $\ug$ of $\ggo$ given by $\ug:=\{ X\in\ggo:\tr{\ad{X}}=0\}$.  If $G$ is not unimodular, then $\ggo=\RR X_0\oplus\ug$ is an orthogonal decomposition for some $X_0\in\la e_3,e_4,e_7\ra$, $|X_0|=1$.  It follows that there exists $h$ in the group $U_{\ggo_1,\tau}$ given in \eqref{Ug} such that $h(X_0)=e_7$.  The map $h$ therefore defines an equivariant equivalence between $(G,\vp)$ and $(G_\mu,\vp)$, where $\mu:=h\cdot\lb$, and we have that $h(\ug)=\hg$ (since $h$ is orthogonal) and $\tau_\mu=h\cdot\tau=\tau$.

In the case when $G$ is unimodular, it is proved in \cite[Theorem 6.7]{FinRff2} that $\ggo$ must be isomorphic to certain solvable Lie algebra.  In the present proof, we only use that $\ggo$ is solvable and we argue as in the beginning of the proof of \cite[Theorem 6.7]{FinRff2}.  Recall from Proposition \ref{erp-prop}, (iv) that $\Ricci\leq 0$ and the kernel of $\Ricci$ is $\ggo_1$.  The nilradical $\ngo$ of $\ggo$ is therefore contained in $\ggo_1$ by \cite[Lemma 1]{Dtt} and since $\ggo$ is solvable, $[\ggo,\ggo]\subset\ngo$.  Hence $\hg$ is an ideal of $\ggo$, concluding the proof.
\end{proof}

The following example shows that the above proposition is not valid in general for closed $G_2$-structures.

\begin{example}
Consider $(G_{\mu},\vp)$ with $\lambda(e_1,e_2)=e_3$, $\lambda(e_2,e_3)=4e_5$ and
$$
A=\left[\begin{smallmatrix}
1 & 0 & 0 & 0 & 0 & 0 \\ 0 & -1 & 0 & 0 & 0 & 0 \\ 0 & 0 & 0 & 0 & 0 & 0 \\ 0 & 0 & 0 & 2 & 0 & 0  \\ 0 & 0 & 0 & 0 & -1 & 0 \\ 0 & 1 & 0 & 0 & 0 & -3
\end{smallmatrix}\right],
$$
with respect to the basis $\{e_1,\dots,e_6\}$. It is straightforward to check that $d_\mu\vp=0$ and  $\tau_\mu=-2e^{12}-e^{16}-4e^{34}-e^{37}+6e^{56}$.
Since $\hg$ is the nilradical of $\mu$, the torsion $2$-form $\tau_{\mu_1}$ of any $(G_{\mu_1},\vp)$ equivariantly equivalent to $(G_\mu,\vp)$ will satisfy
that $\tau_{\mu_1}(e_7,\cdot)$ is not identically zero.   Indeed, any orthogonal isomorphism between $G_\mu$ and $G_{\mu_1}$ must stabilize both $\hg$ and
$\RR e_7$.
\end{example}

The diagonalization of $\tau$ obtained in Proposition \ref{ERP-mu} makes the equivalence problem much simpler to tackle.
Recall the subgroups $U_{\hg,\tau}$ and $U_{\ggo_1,\tau}$ of $G_2$ described in Section \ref{subg-sec}.

\begin{proposition}\label{equiv}
Assume that $(G_{\mu_1},\vp)$ and $(G_{\mu_2},\vp)$ have $\tau_{\mu_1}=\tau_{\mu_2}=e^{12}-e^{56}$.  Then they are equivariantly equivalent if and only if $\mu_2=h\cdot\mu_1$ for some $h\in U\subset G_2$, where
\begin{itemize}
\item[(i)] $U=U_{\hg,\tau}$ if they are not unimodular; and
\item[(ii)] $U=U_{\ggo_1,\tau}$ if they are unimodular and $\ggo_1=\spann\{ e_1,e_2,e_5,e_6\}$ is their nilradical.
\end{itemize}
\end{proposition}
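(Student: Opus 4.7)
The plan is to recast equivariant equivalence at the Lie algebra level and then cut the resulting set of intertwiners down to the stated subgroups by invoking the intrinsic nature of $\tau$, $\hg$, and $\ggo_1$. First I would unpack the definition from Section \ref{closed}: since both $G_{\mu_1}$ and $G_{\mu_2}$ are simply connected, an equivariant equivalence between $(G_{\mu_1},\vp)$ and $(G_{\mu_2},\vp)$ corresponds, via $h:=dF|_e$, to a linear isomorphism $h:\ggo\to\ggo$ satisfying both $h^*\vp=\vp$ (equivalently $h\in G_2$) and $\mu_2=h\cdot\mu_1$ (equivalently $h$ is a Lie algebra isomorphism $(\ggo,\mu_1)\to(\ggo,\mu_2)$). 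This immediately yields the ``if'' direction, since any $h\in U\subset G_2$ with $\mu_2=h\cdot\mu_1$ integrates to such an $F$.

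For the ``only if'' direction, suppose we are given $h\in G_2$ with $\mu_2=h\cdot\mu_1$. Because the torsion $2$-form is built from $\vp$ and $\mu$ by the intrinsic recipe $\tau_\mu=-\ast d_\mu\ast\vp$ of \eqref{tauphi}, and $h$ preserves $\vp$ (and hence $\ast$) while intertwining the Chevalley--Eilenberg differentials $d_{\mu_1}$ and $d_{\mu_2}$, one obtains $h\cdot\tau_{\mu_1}=\tau_{\mu_2}$. By hypothesis both torsions equal $\tau=e^{12}-e^{56}$, so $h\cdot\tau=\tau$, that is, $h$ lies in the stabilizer of $\tau$ in $G_2$.

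It then suffices to verify that $h$ preserves the relevant distinguished subspace. In case (i), the Remark following Proposition \ref{closed-mu} identifies $\hg$ intrinsically as $\{X\in\ggo:\tr\ad_{\mu_i}X=0\}$ for each non-unimodular $\mu_i$, and this subspace is manifestly preserved by any Lie algebra isomorphism; thus $h(\hg)=\hg$ and hence $h\in U_{\hg,\tau}$. In case (ii), $\ggo_1$ is by hypothesis the nilradical of both $\mu_1$ and $\mu_2$, and the nilradical is carried to itself by every Lie algebra isomorphism, so $h(\ggo_1)=\ggo_1$ and hence $h\in U_{\ggo_1,\tau}$.

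There is no serious obstacle in this argument; it is essentially formal once one notes that $\tau$, $\hg$, and $\ggo_1$ all admit isomorphism-invariant descriptions in our setting. The only point requiring a line of care is the equivariance $h\cdot\tau_{\mu_1}=\tau_{\mu_2}$, which I would justify by the naturality of $\ast$ and of the exterior derivative with respect to $G_2$-preserving Lie algebra isomorphisms.
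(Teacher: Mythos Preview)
Your proposal is correct and follows essentially the same approach as the paper: both arguments reduce equivariant equivalence to $h\in G_2$ with $\mu_2=h\cdot\mu_1$, then use that $\hg$ (resp.\ $\ggo_1$) is a characteristic ideal in the non-unimodular (resp.\ unimodular with nilradical $\ggo_1$) case together with $h\cdot\tau=\tau$ to land in $U_{\hg,\tau}$ (resp.\ $U_{\ggo_1,\tau}$). Your treatment is in fact slightly more explicit than the paper's, since you spell out the naturality argument for $h\cdot\tau_{\mu_1}=\tau_{\mu_2}$, which the paper simply asserts.
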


\begin{proof}
In the non-unimodular case (i), $\hg$ is a characteristic ideal of both Lie algebras by Proposition \ref{closed-mu} and so any equivariant equivalence $h$ between them must leave $\hg$ invariant and stabilize $\tau$, that is, $h\in U_{\hg,\tau}$.  On the other hand, part (ii) follows from the fact that $h$ must leave $\ggo_1$ invariant (i.e.\ $h\in U_{\ggo_1}$) being $\ggo_1$ the nilradical of both Lie algebras, and so $h\in U_{\ggo_1,\tau}$ since $h\cdot\tau=\tau$  (see Section \ref{subg-sec}).

The converse easily follows from the fact that $U\subset G_2$.
\end{proof}

In the light of Proposition \ref{ERP-mu}, we consider from now on a closed $G_2$-structure $(G_\mu,\vp)$ such that
\begin{equation}\label{tau-linda}
\tau:=\tau_\mu=e^{12}-e^{56}.
\end{equation}
In that case,
\begin{equation}\label{linda-1}
d_\lambda\omega\wedge\omega=0,
\end{equation}
by Proposition \ref{tauLA}, $\tau\wedge\tau=-2e^{1256}$ and $\ast(\tau\wedge\tau)=-2e^{347}$.  This implies that $(G_\mu,\vp)$ is ERP if and only if
\begin{equation}\label{erp-linda}
d_\mu\tau = \frac{1}{3}\vp - \frac{1}{3}e^{347},
\end{equation}
which is equivalent by Lemma \ref{tech3}, (i) to
 \begin{equation}\label{erp-linda-2}
d_\lambda\tau = \frac{1}{3}\rho^+, \qquad \theta(A)\tau = \frac{1}{3}(e^{12}+e^{56}).
\end{equation}
It follows from \eqref{muclosed} and Lemma \ref{tech3}, (iii) that
\begin{align*}
d_\lambda\omega &= d_\lambda(e^{12}+e^{56}) + d_\lambda e^{34} = 3d_\lambda\theta(A)\tau + d_\lambda e^{34} \\
&= 3\theta(A)d_\lambda\tau + d_\lambda e^{34} = \theta(A)\rho^+ + d_\lambda e^{34} =d_\lambda\omega+d_\lambda e^{34},
\end{align*}
and consequently,
\begin{equation}\label{d34}
d_\lambda e^{34} = 0.
\end{equation}

Some algebraic and geometric consequences of Proposition \ref{erp-prop} follow.

\begin{proposition}\label{erp-prop3}
If $(G_\mu,\vp)$ is ERP with $\tau=e^{12}-e^{56}$, then,
\begin{itemize}
\item[(i)] $\ggo_0:=\spann\{ e_7,e_3,e_4\}$,  $\ggo_1:=\spann\{ e_1,e_2,e_5,e_6\}$ and $\hg_1:=\spann\{ e_3,e_4\}$ are Lie subalgebras of $\ggo$.

\item[(ii)] The Ricci operator $\Ricci_\mu$ of $(G_\mu,\ip)$ is diagonal with respect to $\{ e_i\}$ and $\Ricci_\mu|_{\ggo_0}=-\frac{1}{3}I$, $\Ricci_\mu|_{\ggo_1}=0$.

\item[(iii)] If $Q_\mu$ is the unique symmetric operator of $\ggo$ such that $\theta(Q_\mu)\vp=d_\mu\tau$, then
$$
\Ricci_\mu=-\frac{1}{3}I-2Q_\mu; \qquad\mbox{in particular}, \quad Q_\mu|_{\ggo_0}=0, \quad Q_\mu|_{\ggo_1}=-\frac{1}{6}I.
$$
\end{itemize}
\end{proposition}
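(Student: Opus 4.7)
The proof splits naturally into three parts, each leaning on Proposition~\ref{erp-prop} specialized to the normalized torsion $\tau = e^{12} - e^{56}$.

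For part (i), I would not need to do new work for $\ggo_0$ and $\ggo_1$: Proposition~\ref{erp-prop}(ii) and (iii) already give that these are Lie subalgebras, as was used in the proof of Proposition~\ref{ERP-mu}. For $\hg_1 = \spann\{e_3,e_4\}$, I would argue as follows. Since $\ggo_0 = \hg_1 \oplus \RR e_7$ is a subalgebra, $[e_3,e_4]\in\ggo_0$; on the other hand $\hg$ is an ideal of $\ggo$ containing $e_3$ and $e_4$, so $[e_3,e_4]\in\hg$ as well. Hence $[e_3,e_4]\in \ggo_0\cap\hg = \hg_1$, proving $\hg_1$ is a subalgebra.

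For part (ii), I would compute $\tau\wedge\tau = -2e^{1256}$ and $\ast(\tau\wedge\tau) = -2e^{347}$, which identifies the subspaces $P$ and $Q$ of Proposition~\ref{erp-prop}(iv) with $\ggo_0$ and $\ggo_1$ respectively. With $|\tau|^2 = 2$, that part specializes to $\Ricci_\mu|_{\ggo_0} = -\tfrac{1}{3}I$, $\Ricci_\mu|_{\ggo_1} = 0$ and $\la \Ricci_\mu \ggo_0, \ggo_1\ra = 0$. Since $\Ricci_\mu$ acts as a scalar on each summand of the orthogonal decomposition $\ggo = \ggo_0\oplus\ggo_1$, it is block-diagonal with scalar blocks, hence diagonal in the basis $\{e_i\}$.

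For part (iii), existence and uniqueness of a symmetric $Q_\mu$ with $\theta(Q_\mu)\vp = d_\mu\tau$ rest on two general facts: for a closed $G_2$-structure $d\tau$ lies in $\Lambda^3_1\ggo^*\oplus\Lambda^3_{27}\ggo^*$ (no $\Lambda^3_7$ component), and the $G_2$-equivariant map $\sym(\ggo)\to\Lambda^3_1\ggo^*\oplus\Lambda^3_{27}\ggo^*$ given by $Q\mapsto \theta(Q)\vp$ is a bijection by a dimension count ($28 = 1 + 27$) together with Schur's lemma on the irreducible pieces. To exhibit $Q_\mu$ explicitly, I would substitute the ERP identity $d_\mu\tau = \tfrac{1}{3}\vp - \tfrac{1}{3}e^{347}$ from \eqref{erp-linda} and try a diagonal ansatz $Q_\mu = \Diag(q_1,\ldots,q_7)$, motivated by the diagonal shape of both sides in the basis $\{e_i\}$. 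Matching the coefficient of each summand $e^{ijk}$ of $\vp$ gives a linear system whose unique solution is $q_3 = q_4 = q_7 = 0$ and $q_1 = q_2 = q_5 = q_6 = -\tfrac{1}{6}$. The asserted formula $\Ricci_\mu = -\tfrac{1}{3}I - 2Q_\mu$ then reduces to a one-line comparison with part (ii).

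The main obstacle is confirming that the diagonal ansatz yields \emph{the} unique symmetric $Q_\mu$, rather than just a solution on a diagonal subspace; this is exactly what the bijection $\sym(\ggo)\cong\Lambda^3_1\ggo^*\oplus\Lambda^3_{27}\ggo^*$ provides. Everything else is a bookkeeping exercise with the standard expression for $\vp$ in \eqref{phiA}.
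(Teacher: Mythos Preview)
Your proposal is correct. Parts (i) and (ii) match the paper's proof essentially line for line: the paper also obtains $\ggo_0$, $\ggo_1$ from Proposition~\ref{erp-prop}(ii),(iii) (as kernels of the closed forms $\tau\wedge\tau$ and $\ast(\tau\wedge\tau)$) and takes $\hg_1=\ggo_0\cap\hg$; part (ii) is exactly the specialization of Proposition~\ref{erp-prop}(iv) with $|\tau|^2=2$.

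For part (iii) you take a genuinely different route. The paper simply invokes the general identity $\Ricci_\mu=-\tfrac{1}{3}I-2Q_\mu$ for closed $G_2$-structures (quoting \cite[(12)]{LS-ERP}, with the scalar $-\tfrac{1}{3}=-\tfrac{1}{6}|\tau|^2$), and then reads off $Q_\mu$ from the Ricci operator already computed in (ii). You instead compute $Q_\mu$ directly from the ERP equation $d_\mu\tau=\tfrac{1}{3}\vp-\tfrac{1}{3}e^{347}$ via a diagonal ansatz, and then \emph{verify} the Ricci identity by comparison with (ii). Your approach is more self-contained, as it avoids the external citation and the general Bryant-type formula relating $\Ricci$ and $Q$; the paper's approach is shorter and explains why the constant $-\tfrac{1}{3}$ appears (it is $-\tfrac{1}{6}|\tau|^2$, not an accident of this example). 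Either way the argument is complete; the uniqueness of $Q_\mu$ via the $G_2$-equivariant isomorphism $\sym(\ggo)\cong\Lambda^3_1\ggo^*\oplus\Lambda^3_{27}\ggo^*$ is exactly what justifies that your diagonal solution is \emph{the} $Q_\mu$.
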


\begin{remark}\label{steady}
It follows from part (iii) that $Q_\mu\in\Der(\ggo)$, and in particular $(G_\mu,\vp)$ is a steady Laplacian soliton (see \cite[Theorem 3.8]{LF}) and $(G_\mu,\ip)$ is an expanding Ricci soliton (see \cite[(5)]{solvsolitons}), if and only if $\ggo_1$ is an abelian ideal of $\ggo$.
\end{remark}

\begin{proof}
It is well known that the kernel of any closed $k$-form on a Lie algebra is a Lie subalgebra.  Since $\tau\wedge\tau=-2e^{1256}$ and $\ast(\tau\wedge\tau)=-2e^{347}$, it follows from Proposition \ref{erp-prop} (ii), (iii) that  $\ggo_0$ and $\ggo_1$ are Lie subalgebras of $\ggo$.  In particular, $\hg_1=\hg\cap\ggo_0$ is also a subalgebra.  Parts (ii) and (iii) are direct consequences of \cite[(15)]{LS-ERP} (for $q=\tfrac{1}{6}$) and \cite[(12)]{LS-ERP}.
\end{proof}

We will now show that the ERP condition actually imposes much stronger constraints on the structure of the Lie algebra.  Let us first introduce some notation.  Consider
$$
\spg(\ggo_1,\tau) := \left\{ E\in\glg(\ggo_1):-\theta(E)\tau = \tau(E\cdot,\cdot)+\tau(\cdot,E\cdot)=0\right\}, \quad \tau=e^{12}-e^{56},
$$
and note that $E\in\spg(\ggo_1,\tau)$ if and only if written in terms of the basis $\{ e_1,e_2,e_5,e_6\}$,
\begin{equation}\label{spt}
E=\left[\begin{smallmatrix}
E_{11}&E_{12}&E_{15}&E_{16} \\ E_{21}&-E_{11}&E_{25}&E_{26} \\ E_{26}&-E_{16}&E_{55}&E_{56} \\ -E_{25}&E_{15}&E_{65}&-E_{55}
\end{smallmatrix}\right].
\end{equation}
We also consider the following three matrices,
\begin{equation}\label{erp-st-cond}
T_7:=\left[\begin{smallmatrix}
-\frac{1}{3} &&& \\ &0&& \\  &&\frac{1}{3}&\\ &&&0
\end{smallmatrix}\right], \quad
T_3:=\left[\begin{smallmatrix}
&&&\frac{1}{3} \\ &&0& \\  &0&&\\ \frac{1}{3}&&&
\end{smallmatrix}\right], \quad
T_4:=\left[\begin{smallmatrix}
&&0& \\ &&&-\frac{1}{3} \\  0&&&\\ &-\frac{1}{3}&&
\end{smallmatrix}\right].
\end{equation}
for which it is easy to check that
\begin{equation}\label{titaT}
\theta(T_7)\tau=\frac{1}{3}\omega_7, \qquad \theta(T_3)\tau=\frac{1}{3}\omega_3, \qquad \theta(T_4)\tau=\frac{1}{3}\omega_4.
\end{equation}

The following is our main structural result.  Recall from Proposition \ref{ERP-mu} that any left-invariant ERP $G_2$-structure on a Lie group is
equivariantly equivalent to some $(G_\mu,\vp)$ with $\tau=e^{12}-e^{56}$ and $\hg=\spann\{e_1,\dots,e_6\}$ unimodular ($\ggo=\RR e_7\oplus\hg$).

\begin{theorem}\label{erp-prop2}
Let $(G_\mu,\vp)$ be an ERP $G_2$-structure with $\tau=e^{12}-e^{56}$ and $\hg$ unimodular.  Then, the following conditions hold:
\begin{itemize}
\item[(i)] $\ggo_0=\spann\{ e_7,e_3,e_4\}$ is a Lie subalgebra  and $\ggo_1=\spann\{ e_1,e_2,e_5,e_6\}$ is an abelian ideal of $\ggo$.  In particular, $\ggo=\ggo_0\ltimes\ggo_1$ and $\ggo$ is solvable.

\item[(ii)] $\hg_1=\spann\{ e_3,e_4\}$ is an abelian subalgebra (so $\hg=\hg_1\ltimes\ggo_1$).

\item[(iii)] There exist $E,F,G\in\spg(\ggo_1,\tau)$ such that
$$
A_2=E+T_7, \quad B_2=F+T_3, \quad C_2=G+T_4,
$$
where $A_1:=A|_{\hg_1}$, $A_2:=A|_{\ggo_1}$, $B_2:=\ad{e_3}|_{\ggo_1}$ and $C_2:=\ad{e_4}|_{\ggo_1}$.  In particular, $\tr{A_2}=\tr{B_2}=\tr{C_2}=0$ and $[B_2,C_2]=0$.
\end{itemize}
\end{theorem}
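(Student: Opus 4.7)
The plan is to augment the information established in Propositions~\ref{ERP-mu} and~\ref{erp-prop3} with further constraints coming from the ERP equation~\eqref{erp-linda} and the closure $d_\mu\vp=0$. Those earlier results already guarantee that $\ggo_0,\ggo_1,\hg_1$ are Lie subalgebras, $\ggo_1$ is abelian, $\hg$ is a unimodular ideal, $\tr\ad e_i|_{\ggo_0}=0$ for $i\in\{1,2,5,6\}$, the Ricci operator satisfies $\Ricci_\mu|_{\ggo_0}=-\tfrac{1}{3}I$ and $\Ricci_\mu|_{\ggo_1}=0$ and is diagonal in the basis $\{e_i\}$, and, for each $i\in\{3,4,7\}$, the identity $\theta(B^{(i)})\tau=\tfrac{1}{3}\omega_i$ holds with $B^{(i)}:=\pi_{\ggo_1}\circ\ad e_i|_{\ggo_1}$. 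What remains is to promote $\ggo_1$ from subalgebra to ideal, to prove that $\hg_1$ is abelian, and to read off the description of $A_2,B_2,C_2$ in part~(iii).

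The heart of the argument is part~(i), which amounts to showing that $\pi_{\hg_1}\circ\ad e_i|_{\ggo_1}=0$ for each $i\in\{3,4,7\}$. Evaluating $d_\mu\vp=0$ on tuples $(e_i,X_1,X_2,X_3)$ with $e_i\in\ggo_0$ and $X_j\in\ggo_1$---using $\ggo_1$ abelian to kill the $[X_a,X_b]$-terms---produces an identity in $\Lambda^3\ggo_1^*$ of the shape $\alpha_3^i\wedge\omega_3+\alpha_4^i\wedge\omega_4=0$, where the linear functionals $\alpha_k^i\in\ggo_1^*$ defined by $\alpha_k^i(X):=e^k([e_i,X])$ encode the two rows of $\pi_{\hg_1}\circ\ad e_i|_{\ggo_1}$. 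A direct computation in $\Lambda^\bullet\ggo_1^*$ shows that this relation is equivalent to $\alpha_4^i=-J^*\alpha_3^i$, with $J$ the almost-complex structure of Section~\ref{preli}; this is still not a vanishing statement. To conclude, I would couple it with the standard Milnor-type Ricci formula for left-invariant metrics on $G_\mu$---simpler here because $\hg$ is unimodular, giving the mean-curvature vector $H=(\tr A)e_7$---together with $\Ricci_\mu|_{\ggo_1}=0$ and $\Ricci_\mu(\ggo_0,\ggo_1)=0$ from Proposition~\ref{erp-prop3}. The resulting quadratic identities, built from sums of squared norms of brackets, combine with the linear relations above to force every $\alpha_k^i$ to vanish. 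Thus $\ad\ggo_0\cdot\ggo_1\subset\ggo_1$, so $\ggo_1$ is an abelian ideal and $\ggo=\ggo_0\ltimes\ggo_1$ is solvable. Part~(ii) follows quickly from here: writing $[e_3,e_4]=ae_3+be_4\in\hg_1$, the closure $d_\lambda e^{34}=0$ from~\eqref{d34}---drastically simplified now that $\ggo_1$ is an ideal and $\ad e_i$ preserves $\ggo_1$ for $i=3,4$---forces $a=b=0$.

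For part~(iii), once (i) and (ii) are in hand, $A_2,B_2,C_2$ are bona-fide endomorphisms of $\ggo_1$. The identities $\theta(A_2)\tau=\tfrac{1}{3}\omega_7$, $\theta(B_2)\tau=\tfrac{1}{3}\omega_3$, $\theta(C_2)\tau=\tfrac{1}{3}\omega_4$ combined with~\eqref{titaT} give $A_2-T_7, B_2-T_3, C_2-T_4\in\spg(\ggo_1,\tau)$, producing the required elements $E,F,G$. Tracelessness is automatic because $T_7,T_3,T_4$ and every matrix of the form~\eqref{spt} are traceless, and $[B_2,C_2]=\ad[e_3,e_4]|_{\ggo_1}=0$ by part~(ii) and Jacobi. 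The main obstacle in the whole proof is the vanishing argument at the center of part~(i): the $d_\mu\vp=0$ constraints alone leave a two-parameter family of candidate $\hg_1$-components for each $\ad e_i|_{\ggo_1}$, so the Ricci input is essential, and fitting the linear relations $\alpha_4^i=-J^*\alpha_3^i$ against the quadratic Ricci identities cleanly requires careful bookkeeping.
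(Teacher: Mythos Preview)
Your argument for part~(ii) has a genuine gap. Once $\ggo_1$ is known to be an ideal, the identity $d_\lambda e^{34}=0$ becomes vacuous and cannot force $[e_3,e_4]=0$: indeed, if $[e_3,e_4]=ae_3+be_4$ and $[\hg_1,\ggo_1]\subset\ggo_1$, then $d_\lambda e^3=-a\,e^{34}$ and $d_\lambda e^4=-b\,e^{34}$, so $d_\lambda e^{34}=d_\lambda e^3\wedge e^4-e^3\wedge d_\lambda e^4=0$ automatically, regardless of $a,b$. The paper instead deduces $[e_3,e_4]=0$ from unimodularity of $\hg$ together with $\tr B_2=\tr C_2=0$ (which follows immediately from $B_2-T_3,\,C_2-T_4\in\spg(\ggo_1,\tau)$): in the block-triangular form of $\ad_\hg e_3$ one reads off $0=\tr\ad_\hg e_3=b+\tr B_2=b$, and likewise $a=0$. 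For this reason the paper proves (iii) and (ii) \emph{before} (i).

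For part~(i) your strategy diverges substantially from the paper's, and your route is both harder and not verified. The paper never invokes the Ricci formula for this step. For $i=3,4$ it extracts from $d_\lambda\rho^+=0$ and $d_\lambda e^{34}=0$ the linear relations~\eqref{coso} among the structure constants $c_{ijk}$, and then uses the Jacobi identity in the elementary form $[\ad e_1,\ad e_2]=\ad[e_1,e_2]=0$ (since $\ggo_1$ is abelian) to compute, for instance, $0=\langle[\ad e_1,\ad e_2]e_4,e_3\rangle=2(c_{133}^2+c_{134}^2)$; this sum-of-squares forces the $\hg_1$-components of $\ad e_3|_{\ggo_1}$ and $\ad e_4|_{\ggo_1}$ to vanish. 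For $i=7$ the paper uses a completely different trick based on $\ast\tau=-d\ast\vp$: from $\langle e^{1k},\tau\rangle=0$ (and analogues) one writes $0=e^{1k}\wedge\ast\tau=-e^{1k}\wedge d\ast\vp$, integrates by parts, and reads off each $c_{i7j}$ directly. Your proposed appeal to the Milnor Ricci formula would have to disentangle the unknown $\hg_1$-components from the many nonzero $\ggo_1$-components of the brackets (e.g.\ $|B_2X|^2$) appearing in $\Ricci|_{\ggo_1}$; you acknowledge this bookkeeping is the main obstacle, but it is not clear it closes without further input, whereas the Jacobi computation is a two-line sum of squares.
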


\begin{proof}
We first prove part (iii).  Recall that $\ggo_0$, $\ggo_1$ and $\hg_1$ are all Lie subalgebras of $\ggo$.   It was shown in the proof of Proposition \ref{ERP-mu} that,
$$
\theta(A_2)\tau=\frac{1}{3}\omega_7, \qquad \theta(B_2)\tau=\frac{1}{3}\omega_3, \qquad \theta(C_2)\tau=\frac{1}{3}\omega_4,
$$
and  thus $A_2-T_7$, $B_2-T_3$ and $C_2-T_4$ all belong to $\spg(\ggo_1,\tau)$ and the first assertion in part (iii) follows.  Note that  $\tr{A_2}=\tr{B_2}=\tr{C_2}=0$ and so $\lambda(e_3,e_4)=0$ (i.e.\ $\hg_1$ abelian) follows from the fact that $\hg$ is unimodular, completing the proof of parts (ii) and (iii).

It was also obtained in the proof of Proposition \ref{ERP-mu} that $\ggo_1$ is abelian.  We now prove that $\ggo_1$ is an ideal, which will conclude the proof of the theorem.  If we set $B:=\ad{e_3}|_{\hg}$ and $C:=\ad{e_4}|_{\hg}$, then from \eqref{muclosed},
\begin{align*}
0=&d_\lambda \rho^+=d_\lambda\omega_3\wedge e^3 + \omega_3 \wedge d_\lambda e^3+d_\lambda \omega_4\wedge e^4+\omega_4\wedge d_\lambda e^4\\
 =&d_{\ggo_1}\omega_3\wedge e^3 - \theta(C_2)\omega_3\wedge e^{34} - \omega_3 \wedge \theta(B) e^3\wedge e^3 - \omega_3 \wedge \theta(C) e^3\wedge e^4\\
 &+d_{\ggo_1} \omega_4\wedge e^4+\theta(B_2)\omega_4\wedge e^{34} - \omega_4\wedge  \theta(B) e^4\wedge e^3 -\omega_4\wedge \theta(C) e^4\wedge e^4\\
 =&\left(- \theta(C_2)\omega_3+\theta(B_2)\omega_4\right)\wedge e^{34}  - \left(\omega_3 \wedge \theta(B) e^3 +\omega_4\wedge \theta(B) e^4\right)\wedge e^3\\
 & -\left(\omega_3 \wedge \theta(C) e^3+\omega_4\wedge \theta(C) e^4\right)\wedge e^4.
\end{align*}
Since $\theta(B) e^3,\theta(B) e^4,\theta(C) e^3,\theta(C) e^4 \in\Lambda^1\ggo_1^*$, it follows that
\begin{align*}
0=&\omega_3 \wedge \theta(B) e^3 +\omega_4\wedge \theta(B) e^4=\sum_{1,2,5,6} \left(\omega_3 \wedge c_{i33}e^i +\omega_4\wedge c_{i34}e^i\right)\\
=&(c_{133}-c_{234})e^{126}+(c_{233}+c_{134})e^{125}+(c_{353}-c_{364})e^{256}+(c_{363}+c_{354})e^{156},
\end{align*}
and
\begin{align*}
0=& \omega_3 \wedge \theta(C) e^3+\omega_4\wedge \theta(C) e^4= \sum\left(\omega_3 \wedge c_{i43}e^i +\omega_4\wedge c_{i44}e^i\right)\\
=& (c_{143}-c_{244}e)e^{126}+(c_{243}+c_{144})e^{125}+(c_{453}-c_{464})e^{256}+(c_{463}+c_{454})e^{156}.
\end{align*}
Moreover,
$$
0=  d_\lambda e^{34} = d_\lambda e^3\wedge e^4 - e^3 \wedge d_\lambda e^{4} =-\left( \theta(B) e^3 + \theta(C) e^{4}\right) \wedge e^{34}=\sum_{1,2,5,6}(c_{3i3}+c_{4i4})e^{i34}.
$$
Summarizing, we have obtained that
\begin{align}
&c_{133}=-c_{144}=c_{234}=c_{243}, \qquad  c_{353}=c_{364}=-c_{454}=c_{463},\label{coso} \\
&c_{134}=c_{143}=-c_{233}=c_{244}, \qquad  c_{354}=-c_{363}=c_{453}=c_{464}.\nonumber
\end{align}
As before,
\begin{align*}
0=&\la [\ad{e_1},\ad{e_2}]e_4,e_3\ra=\la \ad{e_1}\ad{e_2}e_4-\ad{e_2}\ad{e_1}e_4,e_3\ra\\
=&\sum_{i=1}^7\la c_{24i}\ad{e_1}(e_i)-c_{14i}\ad{e_2}(e_i),e_3\ra=\sum_{i,j=1}^7\la c_{24i}c_{1ij}e_j-c_{14i}c_{2ij}e_j,e_3\ra\\\
=&\sum_{i=1}^7( c_{24i}c_{1i3}-c_{14i}c_{2i3})=c_{243}c_{133}+ c_{244}c_{143}-c_{143}c_{233}-c_{144}c_{243}\\
=& 2(c_{133}^2+ c_{134}^2).
\end{align*}
In much the same way, one obtains that $ 0=\la [\ad{e_5},\ad{e_6}](e_4),e_3\ra= 2(c_{353}^2+ c_{354}^2)$. Thus, $c_{133}=c_{134}=c_{353}=c_{354}=0$ and so
it follows from \eqref{coso} that $[\ggo_1,\hg]\subset\ggo_1$.

Therefore, it only remains to show that  $[\ggo_1,e_7]\subset \ggo_1$.  It follows from
$\tau=e^{12}-e^{56}$, $\hg$ unimodular and $\ggo_1$ abelian that
\begin{align*}
0=&\la e^{13},\tau\ra\vol= e^{13} \wedge \ast\tau = -e^{13} \wedge d\ast\vp \\
=& -d(e^{13} \wedge \ast\vp)+ de^{13} \wedge \ast\vp
 =-d(e^{123467})+ \la de^{13},\vp\ra\vol \\
 =& \tr(\ad{e_5})\vol + \la de^1 \wedge e^3,\vp\ra \vol + \la e^1 \wedge de^3,\vp\ra\vol =-c_{273}.
\end{align*}
In the same manner, we can see that $0=c_{i7j}$ for each $i\in\{1,2,5,6\}$ and $j\in\{3,4,7\}$.  This implies that $\la [\ggo_1,e_7],\ggo_0\ra$ vanishes and so $\ggo_1$ is an ideal, as desired.
\end{proof}

The following geometric consequence of Theorem \ref{erp-prop2} follows from Remark \ref{steady}.

\begin{corollary}
Any left-invariant ERP $G_2$-structure on a Lie group is both a steady Laplacian soliton and an expanding Ricci soliton.
\end{corollary}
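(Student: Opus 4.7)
The plan is to invoke the structural results already assembled in this section, since all the heavy lifting has been done. By Proposition \ref{ERP-mu}, any left-invariant ERP $G_2$-structure on a Lie group is equivariantly equivalent, up to scaling, to some $(G_\mu,\vp)$ with $\tau=e^{12}-e^{56}$ and $\hg$ unimodular. Equivariant equivalence preserves both the soliton conditions in question (Laplacian and Ricci), and scaling only rescales time in the flows, so it suffices to verify the two soliton properties for such a $(G_\mu,\vp)$.

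Next I would invoke Theorem \ref{erp-prop2}(i), which tells us that $\ggo_1=\spann\{e_1,e_2,e_5,e_6\}$ is an abelian ideal of $\ggo$. This is precisely the hypothesis singled out in Remark \ref{steady}: under it, the unique symmetric operator $Q_\mu$ characterized by $\theta(Q_\mu)\vp=d_\mu\tau$ becomes a derivation of $\ggo$. Indeed, by Proposition \ref{erp-prop3}(iii), we have $Q_\mu|_{\ggo_0}=0$ and $Q_\mu|_{\ggo_1}=-\tfrac{1}{6}I$; when $\ggo_1$ is an abelian ideal, $Q_\mu$ clearly commutes with the adjoint action of $\ggo_0$ on $\ggo_1$ (both as linear operators), kills $\ad_\mu{X}$ for $X\in\ggo_0$ applied to $\ggo_0$ since $[\ggo_0,\ggo_0]\subset\ggo_0$ while $Q_\mu$ vanishes on $\ggo_0$, and annihilates $[\ggo_1,\ggo_1]=0$, so the derivation identity is trivially satisfied on each pair of factors.

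Once $Q_\mu\in\Der(\ggo)$, the two soliton conclusions come for free from the references cited in Remark \ref{steady}: the Laplacian soliton property, with dilation factor $0$ (steady), follows from \cite[Theorem 3.8]{LF}, and via the formula $\Ricci_\mu=-\tfrac{1}{3}I-2Q_\mu$ of Proposition \ref{erp-prop3}(iii), the Ricci operator has the form $cI+D$ with $c=-\tfrac{1}{3}<0$ and $D=-2Q_\mu\in\Der(\ggo)$, which is precisely the algebraic soliton condition \cite[(5)]{solvsolitons} characterizing expanding Ricci solitons on Lie groups.

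There is essentially no obstacle here: the statement is a clean corollary, and the only care needed is to observe that equivariant equivalence and scaling both preserve the soliton properties so that the reduction to the canonical form of Proposition \ref{ERP-mu} is legitimate. The whole proof should fit in a sentence or two, citing Theorem \ref{erp-prop2}(i) and Remark \ref{steady}.
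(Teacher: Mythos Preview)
Your proposal is correct and follows exactly the paper's approach: the paper's proof is a single sentence invoking Theorem \ref{erp-prop2}(i) (that $\ggo_1$ is an abelian ideal) together with Remark \ref{steady}, which is precisely what you do. Your additional unpacking of why $Q_\mu$ is a derivation once $\ggo_1$ is an abelian ideal is more detail than the paper gives, but it is accurate and the overall route is identical.
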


Recall that all the examples of Laplacian solitons found in \cite{LF, Ncl} are expanding.

We now give the converse of Theorem \ref{erp-prop2}, which paves the way to the search for examples and eventually, to a full classification.   In addition to \eqref{phi-intro}, we denote by
$$
\overline{\omega_3}:=e^{26}+e^{15}, \qquad \overline{\omega_4}:=e^{16}-e^{25}.
$$

\begin{proposition}\label{erp-iff}
Let $\mu$ denote a Lie bracket on $\ggo$ whose only nonzero structural constants are given by $A_1$, $A_2$, $B_2$ and $C_2$ as in Theorem \ref{erp-prop2}.   Then $(G_\mu,\vp)$ is ERP with $\tau_\mu=e^{12}-e^{56}$ if and only if there exist $E,F,G\in\spg(\ggo_1,e^{12}-e^{56})$ (see \eqref{spt}) such that the following conditions hold:
\begin{itemize}
\item[(i)] $A_2=E+T_7$, $B_2=F+T_3$ and $C_2=G+T_4$, where the $T_i$'s are defined as in \eqref{erp-st-cond}.
\item[ ]
\item[(ii)] $\theta(E^t)\omega_7+\theta(F^t)\omega_3+\theta(G^t)\omega_4 = -(\tr{A_1})\omega_7$.
\end{itemize}
\end{proposition}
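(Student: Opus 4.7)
The plan is to prove the equivalence through three explicit computations; both implications fall out of the same setup. Since $\tau = e^{12}-e^{56}\in\Lambda^2\ggo_1^*$, while Theorem~\ref{erp-prop2} tells me $\ggo_1$ is an abelian ideal and $\ggo_0=\spann\{e_7,e_3,e_4\}$ is a subalgebra, the Koszul formula for the differential collapses to
\[
d_\mu\tau \;=\; \theta(\ad_\mu e_7|_{\ggo_1})\tau\wedge e^7 + \theta(\ad_\mu e_3|_{\ggo_1})\tau\wedge e^3 + \theta(\ad_\mu e_4|_{\ggo_1})\tau\wedge e^4.
\]
Condition (i) together with $E,F,G\in\spg(\ggo_1,\tau)$ (so $\theta(E)\tau=\theta(F)\tau=\theta(G)\tau=0$) and \eqref{titaT} immediately identifies the three coefficients as $\tfrac13\omega_7$, $\tfrac13\omega_3$, $\tfrac13\omega_4$, and reassembly via \eqref{phi-intro} yields $d_\mu\tau=\tfrac13(\vp-e^{347})$. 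Applying $d_\mu$ to this identity and noting that $d_\mu e^{347}=0$ (under (i), every term in the expansion of $d_\mu(e^3\wedge e^4\wedge e^7)$ carries a repeated factor), I get $d_\mu\vp=0$ for free from $d_\mu^2=0$. Thus closedness of $\vp$ is automatic and $\tau_\mu$ is well defined.

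Next I would compute $\tau_\mu$ via Proposition~\ref{tauLA}, writing $\tau_\mu=\tau_\lambda+\tau_A$. Since $A=\ad_\mu e_7|_\hg$ is block diagonal for $\hg=\hg_1\oplus\ggo_1$ with traceless $\ggo_1$-block $A_2$, the piece $\tau_A=(\tr A)\omega+\theta(A^t)\omega$ collapses to $(\tr A_1)\omega_7+\theta(A_2^t)\omega_7$. For $\tau_\lambda$, the decomposition $\rho^-=\omega_3\wedge e^4-\omega_4\wedge e^3$ and the vanishing $d_\lambda\omega\wedge\omega=0$ (a consequence of $\tr B_2=\tr C_2=0$) reduce $\tau_\lambda$ to $-\ast_\hg d_\lambda\rho^-$; identifying $\ast_\hg(e^{34}\wedge\cdot)$ with the four-dimensional Hodge star $\ast_{\ggo_1}$ on $\Lambda^2\ggo_1^*$ turns this into $-\ast_{\ggo_1}(\theta(B_2)\omega_3+\theta(C_2)\omega_4)$. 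The decisive input is that $\omega_7,\omega_3,\omega_4$ are self-dual while $\tau,\overline{\omega_3},\overline{\omega_4}$ are anti-self-dual under $\ast_{\ggo_1}$; combined with Lemma~\ref{tech1}(vi) applied inside $\ggo_1$ to the traceless matrices $E,F,G$, this produces the identity $\ast_{\ggo_1}\theta(E)\omega_i=-\theta(E^t)\omega_i$ for $i\in\{3,4,7\}$. Expanding $A_2=E+T_7$, $B_2=F+T_3$, $C_2=G+T_4$ and tracking the $T_i$-contributions (which collect, via $\ast_{\ggo_1}\tau=-\tau$, into a single copy of $\tau$), I expect the clean formula
\[
\tau_\mu \;=\; \tau \;+\; \theta(E^t)\omega_7+\theta(F^t)\omega_3+\theta(G^t)\omega_4+(\tr A_1)\omega_7,
\]
so that $\tau_\mu = e^{12}-e^{56}$ is exactly condition (ii).

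Both implications now follow. Assuming (i) and (ii), the previous two paragraphs give closedness and $\tau_\mu=\tau$, while the first gives $d_\mu\tau=\tfrac13(\vp-e^{347})$; since $|\tau|^2=2$ and $\ast(\tau\wedge\tau)=-2e^{347}$, the right-hand side $\tfrac16|\tau|^2\vp+\tfrac16\ast(\tau\wedge\tau)$ equals $\tfrac13(\vp-e^{347})=d_\mu\tau_\mu$, so $\vp$ is ERP. Conversely, if $(G_\mu,\vp)$ is ERP with $\tau_\mu=e^{12}-e^{56}$, then Theorem~\ref{erp-prop2}(iii) delivers (i) and the $\tau_\mu$-formula from the second paragraph forces (ii). I expect the main technical obstacle to be the bookkeeping in the $\tau_\mu$-computation: making the various $\tfrac13\tau$ and $\tfrac13\overline{\omega_i}$ leftovers produced by the off-diagonal pieces $\theta(T_i)\omega_j$ (with $i\neq j$) cancel cleanly, leaving only the symmetric expression of condition (ii) — the self-dual/anti-self-dual splitting of $\Lambda^2\ggo_1^*$ is precisely what makes this work.
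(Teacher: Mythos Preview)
Your argument is correct and, for the computation of $\tau_\mu$, essentially identical to the paper's: both use Proposition~\ref{tauLA}, reduce $-\ast_\hg d_\lambda\rho^-$ to $\theta(B_2^t)\omega_3+\theta(C_2^t)\omega_4$ via the four-dimensional Hodge star (the paper does this through Lemma~\ref{tech1}(vi) and Table~\ref{Tact}, you through the self-dual/anti-self-dual splitting of $\Lambda^2\ggo_1^*$, which is the same thing), and then expand using $A_2=E+T_7$, etc., to arrive at $\tau_\mu=\tau+\theta(E^t)\omega_7+\theta(F^t)\omega_3+\theta(G^t)\omega_4+(\tr A_1)\omega_7$.

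The genuine difference is how you establish $d_\mu\vp=0$ in the converse direction. The paper verifies it directly: it writes out the closedness conditions \eqref{muclosed} in terms of $E,F,G$ (these are the three equations labeled \eqref{clo}), and then checks them one by one by evaluating $\theta([A_2,B_2])$, $\theta([A_2,C_2])$, $\theta([B_2,C_2])$ at $\tau$ and invoking the Jacobi identities \eqref{Jac}. You bypass this entirely: having already computed $d_\mu\tau=\tfrac13(\vp-e^{347})$ from condition~(i) alone, you apply $d_\mu$ once more and observe that $d_\mu e^{347}=0$ under the structural hypotheses (since $d_\mu e^7=0$ and $d_\mu e^3,d_\mu e^4\in\spann\{e^{37},e^{47}\}$), so $d_\mu\vp=3\,d_\mu^2\tau=0$. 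This is a cleaner route; the price is that the identities \eqref{clo}, which the paper's approach makes explicit and which are useful elsewhere, remain implicit in yours.
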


\begin{remark}
The Jacobi condition for such a $\mu$ is equivalent to
\begin{equation}\label{Jac}
 [A_2,B_2]=aB_2+cC_2, \quad [A_2,C_2]=bB_2+dC_2, \quad [B_2,C_2]=0, \quad\mbox{where}\;
 A_1=\left[\begin{smallmatrix} a&b\\ c&d\end{smallmatrix}\right].
\end{equation}
\end{remark}

{\small \begin{table}
\renewcommand{\arraystretch}{1.6}
$$
\begin{array}{|c||c|c|c|c|c|c|}\hline
   & \tau & \omega_7 & \omega_3 & \omega_4 & \overline{\omega_3} & \overline{\omega_4}\\
\hline\hline
T_7 & \tfrac{1}{3} \omega_7 & \tfrac{1}{3} \tau & 0 & \tfrac{1}{3} \bar{\omega}_4 & 0 & -\tfrac{1}{3} \omega_4 \\
\hline
T_3 & \tfrac{1}{3} \omega_3 & \tfrac{1}{3} \overline{\omega_3} & \tfrac{1}{3} \tau & 0 & -\tfrac{1}{3} \omega_7 & 0 \\
\hline
T_4 & \tfrac{1}{3} \omega_4 &  \tfrac{1}{3} \overline{\omega_4} & 0 &  \tfrac{1}{3} \tau & 0 &  -\tfrac{1}{3} \omega_7  \\
\hline
\end{array}
$$
\caption{$T_i$-actions on $2$-forms}\label{Tact}
\end{table}}

\begin{proof}
We first suppose that $(G_\mu,\vp)$ is ERP with $\tau_\mu=e^{12}-e^{56}$.  Part (i) follows from Theorem \ref{erp-prop2}.  In order to prove (ii), we now proceed to compute $\tau_\mu$ by using the formula given in Proposition \ref{tauLA} and Table \ref{Tact} (recall from \eqref{linda-1} that $d_\lambda\omega\wedge\omega=0$):
\begin{align*}
-\ast_\hg d_\lambda \rho^-
    =&-\ast_\hg  (e^3\wedge d_\lambda\omega_4 - e^4\wedge d_\lambda\omega_3)
    =-\ast_\hg  (e^{34} \wedge (\theta(C_2)\omega_4+\theta(B_2)\omega_3)) \\
    =&-\ast_{\ggo_1}\theta(C_2)\omega_4-\ast_{\ggo_1}\theta(B_2)\omega_3
    =\theta(C_2^t)\ast_{\ggo_1}\omega_4+\theta(B_2^t)\ast_{\ggo_1}\omega_3\\
    =&\theta(C_2^t)\omega_4+\theta(B_2^t)\omega_3
    =\theta(G^t)\omega_4+\theta(T_4)\omega_4
    +\theta(F^t)\omega_3+\theta(T_3)\omega_3\\
    =&\theta(G^t)\omega_4 + \theta(F^t)\omega_3 + \frac{2}{3}\left(e^{12}-e^{56}\right),
\end{align*}
and on the other hand,
\begin{align*}
(\tr{A})\omega+\theta(A^t)\omega=&(\tr{A_1})e^{34}+(\tr{A_1})\omega_7+\theta(A_2^t)\omega_7+\theta(A_1^t)e^{34}\\
    =&(\tr{A_1})\omega_7+\theta(E^t)\omega_7+\frac{1}{3}(e^{12}-e^{56}).
\end{align*}
Thus part (ii) follows from the fact that $\tau_\mu=e^{12}-e^{56}$.

Conversely, assume that parts (i) and (ii) hold.   Using part (i), \eqref{muclosed} and Table \ref{Tact} , it is easy to see that $d_\mu\vp=0$ if and only if
\begin{align}
\theta(F)\omega_7+a\omega_3+c\omega_4 =& \theta(E)\omega_3 - \frac{1}{3}\overline{\omega_3}, \notag \\
\theta(G)\omega_7+b\omega_3+d\omega_4 =& \theta(E)\omega_4, \label{clo}\\
\theta(F)\omega_4=&\theta(G)\omega_3. \notag
\end{align}
But straightforwardly, one obtains that these equalities respectively follow by just evaluating $\theta([A_2,B_2])$, $\theta([A_2,C_2])$ and $\theta([B_2,C_2])$ at $\tau$ and using the Jacobi condition \eqref{Jac}.   On the other hand, since
\begin{align*}
d_\lambda\omega\wedge\omega =& \unm d_\lambda(\omega\wedge\omega)=d_\lambda(e^{1234}+e^{3456}+e^{1256})=d_\lambda(e^{1256})\\
                              =& \theta(B_2)e^{1256}\wedge e^3+ \theta(C_2)e^{1256}\wedge e^4=-\tr{B_2}e^{12356}-\tr{C_2}e^{12456}=0,
\end{align*}
we obtain from part (ii) that $\tau_\mu=e^{12}-e^{56}$.  It now follows from \eqref{erp-linda-2} and part (i) that $(G_\mu,\vp)$ is ERP, which concludes the proof of the proposition.
\end{proof}

The strong conditions on the Ricci curvature imposed by ERP (see Proposition \ref{erp-prop3}, (iii)) produce very useful constraints on the matrices involved.

\begin{proposition}\label{erp-ricci}
If $(G_\mu,\vp)$ is ERP with $\tau_\mu=e^{12}-e^{56}$, say $\mu=(A_1,A_2,B_2,C_2)$, then the following conditions hold:
\begin{itemize}
\item[(i)] $\tr{S(A_1)^2}+\tr{S(A_2)^2} = \frac{1}{3}$.
\item[ ]
\item[(ii)] $\unm[A_2,A_2^t] + \unm[B_2,B_2^t] + \unm[C_2,C_2^t] = (\tr{A_1})S(A_2)$.
\item[ ]
\item[(iii)] $\tr{S(A_2)S(B_2)}=\tr{S(A_2)S(C_2)}=0$.
\item[ ]
\item[(iv)]  $\left[\begin{smallmatrix} \tr{S(B_2)^2}&\tr{S(B_2)S(C_2)} \\ \tr{S(B_2)S(C_2)}&\tr{S(C_2)^2}\end{smallmatrix}\right]
- \unm [A_1,A_1^t] + (\tr{A_1})S(A_1) = \left[\begin{smallmatrix}
 \frac{1}{3}&0 \\ 0& \frac{1}{3}\end{smallmatrix}\right] $
\end{itemize}
\end{proposition}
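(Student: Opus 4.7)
The plan is to compute the Ricci tensor of $(G_\mu, \ip)$ block-by-block using the standard decomposition for left-invariant metrics on Lie groups, and then match each block against the constraints of Proposition \ref{erp-prop3}(ii), namely that $\Ricci_\mu$ is diagonal in $\{e_i\}$, equals $-\tfrac{1}{3}I$ on $\ggo_0$ and vanishes on $\ggo_1$. Each of the four assertions (i)--(iv) will emerge as the Ricci equation on one specific block.

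The main tool is the well-known formula $\Ricci_\mu = M_\mu - \tfrac{1}{2}B_\mu - S(\ad H)$, where
$$\la M_\mu X, Y\ra = -\tfrac{1}{2}\sum_{i,j}\la\mu(X,e_i),e_j\ra\la\mu(Y,e_i),e_j\ra + \tfrac{1}{4}\sum_{i,j}\la\mu(e_i,e_j),X\ra\la\mu(e_i,e_j),Y\ra,$$
$B_\mu(X,Y) = \tr(\ad X\,\ad Y)$ is the Killing form, $H$ is the mean-curvature vector given by $\la H, X\ra = \tr\ad_\mu X$, and $S$ denotes symmetrization. Because $\hg$ is unimodular and $\tr A_2 = 0$ by Theorem \ref{erp-prop2}, one has $H = (\tr A_1)\,e_7$. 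The structure $\ggo = \ggo_0 \ltimes \ggo_1$ with $\ggo_1$ abelian, $\hg_1$ abelian and $A$-invariant, together with $[\ggo,\ggo]\subset\hg$, collapses the above sums to a short list of contributions expressible in $A_1, A_2, B_2, C_2$.

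Concretely, for $\Ricci(e_7,e_7)$ only $e_i\in\hg$ survive in $M_\mu$, the second half of $M_\mu$ vanishes because $[\ggo,\ggo]\perp e_7$, and $S(\ad H)(e_7,e_7)=0$ because $\ad e_7$ lands in $\hg$; with $A = A_1\oplus A_2$, the identity $\tfrac{1}{2}(\tr A^tA + \tr A^2) = \tr S(A)^2$ yields (i). For $X,Y\in\ggo_1$, the two halves of $M_\mu$ combine into $\tfrac{1}{2}\la([A_2,A_2^t]+[B_2,B_2^t]+[C_2,C_2^t])X,Y\ra$, the Killing form vanishes since $\ad\ggo_1$ lands in $\ggo_1$ on which $\ad\ggo_1$ acts trivially, and $S(\ad H)|_{\ggo_1}$ contributes $(\tr A_1)S(A_2)$, giving (ii). For the off-diagonal $\Ricci(e_7,e_3)$ and $\Ricci(e_7,e_4)$, the second half of $M_\mu$ vanishes via $[\ggo,\ggo]\subset\hg$ and $S(\ad H)$ is zero here; the identity $\tr(ST)=0$ for $S$ symmetric and $T$ skew-symmetric reduces the surviving combination to $-\tr(S(A_2)S(B_2))$ and $-\tr(S(A_2)S(C_2))$ respectively, producing (iii). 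Finally, for $X,Y\in\hg_1$, setting $D_X := \ad X|_{\ggo_1}$, the Killing trace becomes $\tr(D_X D_Y)$, $M_\mu$ contributes $-\tfrac{1}{2}\tr(D_Y^t D_X) + \tfrac{1}{2}\la[A_1,A_1^t]X,Y\ra$, and $S(\ad H)$ produces $(\tr A_1)\la S(A_1)X,Y\ra$; the same trace identity converts $\tr(D_Y^tD_X) + \tr(D_XD_Y)$ into $2\tr(S(D_X)S(D_Y))$, and reading off the $2\times 2$ bilinear form in $X,Y=\alpha e_3+\beta e_4,\gamma e_3+\delta e_4$ gives (iv), with the off-diagonal vanishing of $\Ricci(e_3,e_4)$ automatically encoded in the symmetry of the matrix.

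The main difficulty will be combinatorial rather than conceptual: one must account for every bracket $[e_i,e_j]$ contributing to the relevant direction, including the factor $2$ coming from ordered pairs in the definition of $M_\mu$. The invariance properties of $\hg_1$ and $\ggo_1$ eliminate most terms, but the identity $\tr(ST)=0$ for $S$ symmetric and $T$ skew-symmetric must be applied with care to replace traces of non-symmetric products by traces of symmetric-part products, which is what produces the $S(A_i)S(B_2)$-shaped entries. No input beyond Proposition \ref{erp-prop3} and the semidirect-product structure of Theorem \ref{erp-prop2} is needed.
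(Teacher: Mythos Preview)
Your proposal is correct and follows essentially the same approach as the paper: the paper's proof simply says that all items follow from Proposition~\ref{erp-prop3}(ii) by applying the Ricci formula for solvmanifolds given in \cite[(25)]{solvsolitons}, which is precisely the formula $\Ricci_\mu = M_\mu - \tfrac{1}{2}B_\mu - S(\ad H)$ you are using, specialized to the semidirect-product structure. Your block-by-block computation is a fleshed-out version of what the paper leaves implicit.
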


\begin{proof}
All the items follow from Proposition \ref{erp-prop3}, (ii) by just applying the formula for the Ricci operator of a solvmanifold given in \cite[(25)]{solvsolitons}.
\end{proof}

We also note that if $(G_\mu,\vp)$ is ERP with $\tau_\mu=e^{12}-e^{56}$, then $(G_\mu,\ip)$ is a solvsoliton;  indeed, in terms of the decomposition $\ggo=\ggo_0\oplus\ggo_1$,
$$
\Ricci_\mu = -\frac{1}{3}I +  \left[\begin{array}{cc} 0& \\ & \frac{1}{3}I\end{array}\right] \in\RR I + \Der(\mu).
$$
This allows us to use, in addition to Proposition \ref{erp-ricci}, the structure theorem for solvsolitons \cite[Theorem 4.8]{solvsolitons}.

\section{Examples and structure refinements}\label{cla-sec}

Acording to Theorem \ref{erp-prop2}, for any ERP $(G_\mu,\vp)$ with $\tau=e^{12}-e^{56}$, $\ggo_1=\spann\{ e_1,e_2,e_5,e_6\}$ is an abelian ideal of the Lie algebra $(\ggo,\mu)$.  Thus the nilradical $\ngo$ of $(\ggo,\mu)$ contains $\ggo_1$ and so $\dim{\ngo}\geq 4$.  Recall from Proposition \ref{erp-iff} that the Lie bracket has always the form $\mu=(A_1,A,B,C)$ for certain matrices $A_1\in\glg_2(\RR)$ and $A,B,C\in\glg_4(\RR)$ such that $[B,C]=0$.

We can use Proposition \ref{equiv} to consider the equivalence problem.  The action of the group $U_{\hg,\tau}$ on $\mu=(A_1,A,B,C)$ can be described as follows (see Section \ref{subg-sec}).  If $h\in U_0$, say with $h_1=\left[\begin{smallmatrix}
x&y \\ -y&x\end{smallmatrix}\right]$, $x^2+y^2=1$ and $h_2:=\left[\begin{array}{cc}
h_3&0 \\ 0&h_4\end{array}\right]$, $h_3,h_4\in\SO(2)$, then
\begin{equation}\label{equiv1}
h\cdot\mu = \left(h_1A_1h_1^{-1}, h_2Ah_2^{-1},  h_2(xB-yC)h_2^{-1}, h_2(yB+xC)h_2^{-1}\right),
\end{equation}
and if $g_1:=\left[\begin{smallmatrix}
1& \\ &-1\end{smallmatrix}\right]$ and $g_2:=\left[\begin{smallmatrix}
0&0&1&0\\ 0&0&0&-1\\ -1&0&0&0\\ 0&1&0&0\end{smallmatrix}\right]$, then
\begin{equation}\label{equiv2}
g\cdot\mu = \left(-g_1A_1g_1^{-1}, -g_2Ag_2^{-1}, g_2Bg_2^{-1}, -g_2Cg_2^{-1}\right).
\end{equation}

Let $(G_\mu,\vp)$ be an ERP $G_2$-structure with $\tau=e^{12}-e^{56}$ and nilradical $\ngo$, say $\mu=(A_1,A,B,C)$.  If $\mu$ is unimodular, then $\ngo=\ggo_1$ (see Proposition \ref{erp-unim} below) and in the non-unimodular case, $\ggo_1\subset\ngo\subset\hg$.  In any case, $A_1$ and $A$ are necessarily normal matrices by \cite[Theorem 4.8]{solvsolitons}.

In what follows, we separately study each of the cases $\dim{\ngo}=4,5,6$; note that $\mu$ can not be nilpotent since $\Ricci\leq 0$ (see \cite{Wlf,Mln}).

\subsection{Case $\dim{\ngo}=4$}
In the unimodular case, some necessary algebraic conditions proved by I. Dotti \cite{Dtt} for $\Ricci\leq 0$ give rise to the following characterization.

\begin{proposition}\label{erp-unim}
If $(G_\mu,\vp)$ is ERP with $\tau=e^{12}-e^{56}$, say $\mu=(A_1,A,B,C)$, then the following conditions are equivalent:
\begin{itemize}
\item[(i)] $\mu$ is unimodular (i.e.\ $\tr{A_1}=0$).
\item[(ii)] $A_1=0$ (in particular, $A,B,C$ pairwise commute).
\item[(iii)] $\ggo_1$ is the nilradical of $\mu$ (in particular, $\{ A,B,C\}$ is linearly independent).
\end{itemize}
\end{proposition}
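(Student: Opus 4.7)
My plan is to prove the three-way equivalence by establishing the cycle $(ii)\Rightarrow(i)\Rightarrow(iii)\Rightarrow(ii)$, and then dispose of the two ``in particular'' assertions at the end. The implication $(ii)\Rightarrow(i)$ is immediate, since $A_1=0$ trivially has zero trace.

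For $(i)\Rightarrow(iii)$ I will invoke Dotti's lemma \cite[Lemma 1]{Dtt} exactly as in the proof of Proposition \ref{ERP-mu}. All the required hypotheses are in place: $(\ggo,\mu)$ is solvable by Theorem \ref{erp-prop2}(i), unimodular by assumption, and by Proposition \ref{erp-prop3}(ii) the Ricci operator is non-positive with $\ker\Ricci_\mu=\ggo_1$. Dotti's lemma then gives $\ngo\subseteq\ker\Ricci_\mu=\ggo_1$, while the reverse inclusion $\ggo_1\subseteq\ngo$ is automatic because $\ggo_1$ is an abelian ideal by Theorem \ref{erp-prop2}(i), hence contained in the nilradical.

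The implication $(iii)\Rightarrow(ii)$ is then a one-line check: solvability of $\ggo$ gives $[\ggo,\ggo]\subseteq\ngo=\ggo_1$, and in the parameterization of $\mu$ provided by Proposition \ref{erp-iff} the brackets $[e_7,e_3]=A_1 e_3$ and $[e_7,e_4]=A_1 e_4$ lie in $\hg_1$; forcing them also into $\ggo_1$ yields $A_1 e_3=A_1 e_4=0$, i.e. $A_1=0$.

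Only the two ``in particular'' clauses remain. Pairwise commutativity of $A,B,C$ under (ii) drops out of the Jacobi relations \eqref{Jac} once $a=b=c=d=0$, leaving $[A,B]=[A,C]=[B,C]=0$. For the linear independence of $\{A,B,C\}$ claimed in (iii), I will argue by contradiction: a nontrivial relation $xA+yB+zC=0$ would make $v:=xe_7+ye_3+ze_4$ satisfy $\ad v|_{\ggo_1}=0$, and since the equivalence already proved gives $A_1=0$, the subalgebra $\ggo_0$ is abelian, so also $\ad v|_{\ggo_0}=0$. Thus $v$ is central and hence lies in $\ngo=\ggo_1$, forcing $v\in\ggo_0\cap\ggo_1=0$ and $(x,y,z)=0$. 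I do not foresee any serious obstacle here; the substantive input is just the correct deployment of Dotti's lemma, for which the paper has already set the precedent.
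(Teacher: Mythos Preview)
Your proposal is correct and follows essentially the same route as the paper: the paper also proves $(i)\Rightarrow(iii)$ via Dotti's lemma (using $\Ricci\leq 0$ with $\ker\Ricci=\ggo_1$ and $\ggo_1\subset\ngo$ as abelian ideal), then deduces $A_1=0$ from the fact that the image of any derivation of a solvable Lie algebra lies in the nilradical, and dismisses the remaining implications as trivial. Your version of $(iii)\Rightarrow(ii)$ via $[\ggo,\ggo]\subset\ngo$ is the inner-derivation special case of the paper's argument, and you spell out the two ``in particular'' clauses that the paper leaves implicit; otherwise the arguments coincide.
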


\begin{proof}
Recall from Proposition \ref{erp-prop}, (iv) that $\Ricci\leq 0$ and the kernel of $\Ricci$ is $\ggo_1$.  If $\mu$ is unimodular, then the nilradical $\ngo$ of $\ggo$ is contained in $\ggo_1$ by \cite[Lemma 1]{Dtt}, but $\ggo_1\subset\ngo$ as $\ggo_1$ is an abelian ideal of $\ggo$, so $\ngo=\ggo_1$.  Since the image of any derivation of a solvable Lie algebra is contained in the nilradical, we obtain that $A_1=0$.  The remaining implications trivially hold.
\end{proof}

\begin{proposition}\label{erp-unim-2}
If $(G_\mu,\vp)$ is ERP with $\tau=e^{12}-e^{56}$ and $\mu$ is unimodular, say $\mu=(0,A,B,C)$, then the $4\times 4$ matrices $A,B,C$ are all symmetric, they pairwise commute and the set $\left\{\sqrt{3}A, \sqrt{3}B, \sqrt{3}C\right\}$ is orthonormal.
\end{proposition}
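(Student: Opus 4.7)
The plan is to combine three elements already in place: Proposition~\ref{erp-unim} (which unpacks unimodularity), Proposition~\ref{erp-ricci} (which packages the ERP Ricci identities), and the solvsoliton structure theorem \cite[Theorem 4.8]{solvsolitons}.

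First I would apply Proposition~\ref{erp-unim} to the unimodularity hypothesis, obtaining $A_1=0$, after which the Jacobi conditions~\eqref{Jac} collapse to $[A,B]=[A,C]=[B,C]=0$. Combined with $[e_3,e_4]=0$ from Theorem~\ref{erp-prop2}(ii), the subalgebra $\ggo_0=\spann\{e_7,e_3,e_4\}$ is abelian, and by Proposition~\ref{erp-unim}(iii), $\ggo_1$ is the nilradical of $\ggo$. Hence $\ggo=\ggo_0\oplus\ggo_1$ is the orthogonal decomposition of the solvsoliton $(G_\mu,\ip)$ into its nilradical and an abelian complement.

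The key algebraic input is the solvsoliton structure theorem: since $\ggo_0$ is an abelian orthogonal complement of the nilradical of a solvsoliton, $\ad X|_{\ggo_1}$ must be symmetric for every $X\in\ggo_0$. Specializing to $X=e_7,e_3,e_4$ gives the symmetry of $A,B,C$; pairwise commutativity was already secured above.

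The orthonormality of $\{\sqrt{3}A,\sqrt{3}B,\sqrt{3}C\}$ is then read off from Proposition~\ref{erp-ricci}: substituting $A_1=0$ and $S(A)=A$, $S(B)=B$, $S(C)=C$, item (i) gives $\tr A^2=\tfrac{1}{3}$, item (iv) gives $\tr B^2=\tr C^2=\tfrac{1}{3}$ together with $\tr BC=0$, and item (iii) supplies $\tr AB=\tr AC=0$. Since $\la X,Y\ra=\tr(X^tY)=\tr(XY)$ on symmetric matrices, these are exactly the orthonormality relations required. The only delicate step is extracting symmetry rather than mere normality from the structure theorem; this is what the theorem actually yields when $\ggo_0$ is exhibited as an \emph{abelian} orthogonal complement of the nilradical, which is why unimodularity is indispensable here.
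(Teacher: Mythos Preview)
Your overall architecture matches the paper's, but the step you yourself flag as ``delicate'' is in fact a genuine gap. The solvsoliton structure theorem \cite[Theorem 4.8]{solvsolitons} only yields that $\ad X|_{\ggo_1}$ is a \emph{normal} operator for each $X\in\ggo_0$, not that it is symmetric. (This is exactly how the present paper invokes that theorem: see the sentence just before \S\ref{cla-sec}.1 and the opening of \S5.2, where the conclusion drawn is normality.) Your justification that ``this is what the theorem actually yields when $\ggo_0$ is exhibited as an abelian orthogonal complement of the nilradical'' does not work: the orthogonal complement of the nilradical of a solvsoliton is \emph{always} abelian---this is part of the conclusion of the theorem, not an extra hypothesis---and the conclusion is still only normality. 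So the unimodular assumption, via $A_1=0$, secures the pairwise commutativity of $A,B,C$, but it does not by itself promote normality to symmetry.

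The paper closes this gap with an additional argument: once $A,B,C$ are known to be commuting normal $4\times4$ matrices (obtained either from the structure theorem or directly from Proposition~\ref{erp-ricci}(ii) with $A_1=0$), one observes that $A,B,C,S(A),S(B),S(C)$ form a commuting family of normal matrices, with $S(A),S(B),S(C)$ all nonzero and pairwise orthogonal by Proposition~\ref{erp-ricci}(i),(iii),(iv). In a simultaneous real normal form for $A,B,C$, any genuine $2\times2$ rotation block forces the symmetric parts to lie in a space of dimension at most $2$, which cannot accommodate three nonzero pairwise-orthogonal traceless symmetric matrices; hence all blocks are $1\times1$ and $A,B,C$ are symmetric. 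Once you supply this (or an equivalent) argument, your Step~4 goes through exactly as written.
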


\begin{remark}\label{erp-unim-3}
In particular, $G_\mu$ is isomorphic to the Lie group given in \cite[Example 4.7]{LS-ERP} and Example \ref{J} below.  This has been proved in \cite[Theorem 6.7]{FinRff2}.  We note however that there could be other non-equivalent ERP $G_2$-structures on $G_\mu$.
\end{remark}

\begin{proof}
From equation (ii) in Proposition \ref{erp-ricci} (recall that $A_1=0$), we obtain that the matrices $A,B,C$ are all normal, by just multiplying with each of the three terms (alternatively, one can just apply \cite[Theorem 4.8]{solvsolitons}).  Thus $A,B,C,S(A),S(B),S(C)$ is a commuting family of normal $4\times 4$ matrices, which are all non-zero by Proposition \ref{erp-ricci}, (i) and (iv).  The only possibility for this to happen is that they are all symmetric, and so the set $\left\{\sqrt{3}A, \sqrt{3}B, \sqrt{3}C\right\}$ is orthonormal by Proposition \ref{erp-ricci}, (i), (iii) and (iv), as desired.
\end{proof}

\begin{example}\label{J}
Consider $\mu_J:=(0,A,B,C)$, where
$$
A=\left[\begin{smallmatrix}
-\frac{1}{6}&&& \\ &-\frac{1}{6}&& \\ &&\frac{1}{2}&\\ &&&-\frac{1}{6}
\end{smallmatrix}\right], \quad
B=\left[\begin{smallmatrix}
0&-\frac{\sqrt{2}}{6}&0&\frac{1}{3} \\ -\frac{\sqrt{2}}{6}&0&0&0\\ 0&0&0&0\\ \frac{1}{3}&0&0&0
\end{smallmatrix}\right], \quad
C=\left[\begin{smallmatrix}
\frac{\sqrt{2}}{6}&0&0&0 \\  0&-\frac{\sqrt{2}}{6}&0&-\frac{1}{3} \\ 0&0&0&0 \\ 0&-\frac{1}{3}&0&0
\end{smallmatrix}\right].
$$
It is straightforward to check that all the conditions in Proposition \ref{erp-iff} hold for these matrices, thus $(G_{\mu_J},\vp)$ is an ERP $G_2$-structure with $\tau=e^{12}-e^{56}$, and also that the map
$$
h:=\frac{1}{6}\left[\begin{smallmatrix}0 & 0 & 3\sqrt{2} & 3\sqrt{2} & 0 & 0 & 0 \\
0 & 0 & \sqrt{6} & -\sqrt{6} & -2\sqrt{6} & 0 & 0 \\
-3\sqrt{2}  & -3\sqrt{2} & 0 & 0 & 0 & 0 & 0 \\
-\sqrt{6} & \sqrt{6} & 0 & 0 & 0 & 0 & 2\sqrt{6} \\
0 & 0  & 0 & 0 & 0 & -6 & 0\\
0 & 0 & -2\sqrt{3} & 2\sqrt{3} & -2\sqrt{3} & 0 & 0 \\
-2\sqrt{3} & 2\sqrt{3} & 0 & 0 & 0 & 0 & -2\sqrt{3} \end{smallmatrix}\right]\in G_2
$$
defines an equivariant equivalence between $(G_{\mu_J},\vp)$ and \cite[Example 4.7]{LS-ERP}.
\end{example}

The difficulty in finding new examples in this case relies on the complicated structure of the $4$-dimensional group $U_{\ggo_1,\tau}$ (see \eqref{Ug}) providing the equivariant equivalence.

\subsection{Case $\dim{\ngo}=5$}
By acting with $U_{\hg,\tau}$ if necessary (see \eqref{equiv1}), we can assume in this case that up to equivariant equivalence, $\ngo=\RR e_4\oplus\ggo_1$.  Let $(G_\mu,\vp)$ be an ERP $G_2$-structure with $\tau=e^{12}-e^{56}$ and $\ngo$ as above, say $\mu=(A_1,A,B,C)$.  It follows from \cite[Theorem 4.8]{solvsolitons} that $A_1,A,B$ are normal and $[A,B]=0$, and since $[e_7,\ngo]\subset\ngo$, one further obtains that
$$
A_1=\left[\begin{smallmatrix} 0&0\\ 0&d\end{smallmatrix}\right], \quad d\ne 0, \qquad [A,C]=dC, \qquad [B,C]=0.
$$
By acting with $g$ if necessary as in \eqref{equiv2}, one can assume up to equivariant equivalence that $d>0$.

The following two Lie brackets provide new examples of ERP $G_2$-structures $(G_\mu,\vp)$ with $\tau=e^{12}-e^{56}$ and $\ngo=\RR e_4\oplus\ggo_1$ by Proposition \ref{erp-iff}.

\begin{example}\label{M2}
Consider $\mu_{M2}:=(A_1,A,B,C)$, where
$$
A_1=\left[\begin{smallmatrix} 0&\\ &\frac{1}{3}\end{smallmatrix}\right], \quad
A=\left[\begin{smallmatrix}
-\frac{1}{3} &  &  &  \\
  & 0 & &  \\
  &  & 0 &  \\
  &  & & \frac{1}{3}
\end{smallmatrix}\right], \quad
B=\left[\begin{smallmatrix}
-\frac{1}{6} & 0 & 0 & 0 \\
0  & \frac{1}{6} & \frac{1}{3} & 0 \\
0  & \frac{1}{3} & \frac{1}{6} & 0 \\
0  & 0 & 0 & -\frac{1}{6} \\
\end{smallmatrix}\right], \quad
 C=\left[\begin{smallmatrix}
0 & & &  \\
-\frac{1}{3} & 0 &  &  \\
\frac{1}{3} & 0 & 0 &  \\
0  & -\frac{1}{3} & \frac{1}{3} & 0 \\
\end{smallmatrix}\right].
$$
Note that the nilradical $\ngo$ is $3$-step nilpotent.
\end{example}

\begin{example}\label{M3}
Consider $\mu_{M3}:=(A_1,A,B,C)$, where
$$
A_1=\frac{1}{6}\left[\begin{smallmatrix} 0&0\\0 & \sqrt{6} \end{smallmatrix}\right], \quad
A=\frac{1}{12}\left[\begin{smallmatrix}
-2 & 0 & -\sqrt{2} & 0 \\
0 & -2 & 0 & -\sqrt{2} \\
-\sqrt{2} & 0 & 2 & 0 \\
0 & -\sqrt{2} & 0 & 2
\end{smallmatrix}\right],
$$
$$
B=\frac{1}{6}\left[\begin{smallmatrix}
0 & \sqrt{2} & 0 & 1 \\
\sqrt{2} & 0 & 1 & 0 \\
0 & 1 & 0 & -\sqrt{2} \\
1 & 0 & -\sqrt{2} & 0
\end{smallmatrix}\right], \quad
C=\frac{1}{12}\left[\begin{smallmatrix}
-\sqrt{2} & 0 & 2-\sqrt{6} & 0\\
0 & \sqrt{2} & 0 & -2+\sqrt{6}\\
2+\sqrt{6} & 0 & \sqrt{2} & 0 \\
0 & -2-\sqrt{6} & 0 & -\sqrt{2}
\end{smallmatrix}\right].
$$
The nilradical $\ngo$ is $2$-step nilpotent in this case.
\end{example}

By considering the possible forms for the normal matrices $A$ and $B$ under the condition given in Proposition \ref{erp-iff}, (i), it can be shown with some computer assistance that $[A,B]=0$ never holds unless $A$ and $B$ are both symmetric.

\subsection{Case $\dim{\ngo}=6$}
We have that $\ngo=\hg$ in this case, so $B$ and $C$ are nilpotent.  Let $(G_\mu,\vp)$ be an ERP $G_2$-structure with $\tau=e^{12}-e^{56}$ and nilradical $\ngo=\hg$, say $\mu=(A_1,A,B,C)$.   By using \eqref{equiv1}, we can assume that up to equivariant equivalence,
\begin{itemize}
\item[(i)] either $A_1=\left[\begin{smallmatrix} a&0\\ 0&d\end{smallmatrix}\right]$, with $a\leq d$, $a+d>0$ (in particular, $[A,B]=aB$, $[A,C]=dC$),
\item[ ]
\item[(ii)] or $A_1=\left[\begin{smallmatrix} a&b\\ -b&a\end{smallmatrix}\right]$, with $a>0$, $b\ne 0$ (in particular, $[A,B]=aB-bC$,  $[A,C]=bB+aC$).
\end{itemize}

\begin{example}\label{bryant}
We now present in the format $\mu_{B}:=(A_1,A,B,C)$ the example given by R. Bryant in \cite[Example 1]{Bry}, as well as in \cite[Section 6.3]{ClyIvn} and \cite[Examples 4.13, 4.10]{LS-ERP}.   Consider,
$$
A_1=\left[\begin{smallmatrix} \frac{1}{3}&\\ &\frac{1}{3}\end{smallmatrix}\right], \quad
A=\left[\begin{smallmatrix}
-\frac{1}{6}&&& \\ &-\frac{1}{6}&& \\  &&\frac{1}{6}&\\ &&&\frac{1}{6}
\end{smallmatrix}\right],\quad
B=\left[\begin{smallmatrix}
&&0&0 \\ &&0&0 \\ 0&\frac{1}{3}&&\\ \frac{1}{3}&0&&
\end{smallmatrix}\right], \quad
C=\left[\begin{smallmatrix}
&&0&0 \\ &&0&0 \\ \frac{1}{3}&0&&\\ 0&-\frac{1}{3}&&
\end{smallmatrix}\right].
$$
Note that $\ngo$ is $2$-step nilpotent.
\end{example}

The following is a new example with a $4$-step nilpotent nilradical of dimension $6$.

\begin{example}\label{M1}
Consider $\mu_{M1}:=(A_1,A,B,C)$, where
$$
A_1:=\frac{1}{30}\left[\begin{smallmatrix}
 \sqrt{30} & 0 \\
  0 & 2\sqrt{30}
\end{smallmatrix}\right], \qquad
A:=\frac{1}{60}\left[\begin{smallmatrix}
 -10- \sqrt{30} & 0 & -2\sqrt{5} & 0 \\
 0 & -10 + \sqrt{30} & 0 & -2\sqrt{5} \\
 -2\sqrt{5} & 0 & 10- \sqrt{30} & 0 \\
 0 & -2\sqrt{5} & 0 & 10+ \sqrt{30} \\
\end{smallmatrix}\right],
$$
$$
B:=\frac{1}{30}\left[\begin{smallmatrix}
 0 & - \sqrt{5} & 0 & 5- \sqrt{30} \\
 5\sqrt{5} & 0 & 5 & 0 \\
 0 & 5+\sqrt{30} & 0 & \sqrt{5} \\
 5  & 0 & -5 \sqrt{5} & 0 \\
\end{smallmatrix}\right], \qquad
C:=\frac{1}{30}\left[\begin{smallmatrix}
 -\sqrt{5} & 0 & 5- \sqrt{30} & 0 \\
 0 &  \sqrt{5} & 0 & -5+ \sqrt{30} \\
5+\sqrt{30} & 0 & \sqrt{5} & 0 \\
 0 & -5-\sqrt{30} & 0 & -\sqrt{5} \\
\end{smallmatrix}\right].
$$
\end{example}

\begin{remark}\label{notequiv}
It is worth pointing out that the five examples we have given in this section (i.e.\ Examples \ref{J}, \ref{M2}, \ref{M3}, \ref{bryant}, \ref{M1}) are pairwise non-equivalent (even up to scaling).  Indeed, the underlying solvable Lie groups are pairwise non-isomorphic, and since they are all completely solvable, the corresponding left-invariant metrics can never be isometric up to scaling (see \cite{Alk}).
\end{remark}

\section{Deformations and rigidity}\label{deform-sec}

We study in this section deformations and two notions of rigidity for ERP $G_2$-structures on Lie groups.

As in Section \ref{preli}, we fix a $7$-dimensional real vector space $\ggo$ endowed with a basis $\{ e_1,\dots,e_7\}$ and the positive $3$-form defined in \eqref{phiA}, whose associated inner product $\ip$ is the one making the basis $\{ e_i\}$ oriented and orthonormal.

Let $\lca\subset\Lambda^2\ggo^*\otimes\ggo$ denote the algebraic subset of all Lie brackets on $\ggo$ and for every $\mu\in\lca$, denote by $G_\mu$ the simply connected Lie group with Lie algebra $(\ggo,\mu)$.  Each $\mu\in\lca$ will be identified with the left-invariant $G_2$-structure determined by $\vp$ on $G_\mu$:
$$
\mu \longleftrightarrow (G_\mu,\vp).
$$
The isomorphism class of $\mu$, $\Gl_7(\RR)\cdot\mu$, therefore stands for the set of all left-invariant $G_2$-structures on $G_\mu$,  due to the equivariant equivalence,
$$
(G_{h\cdot\mu},\vp) \simeq (G_\mu,\vp(h\cdot,h\cdot,h\cdot)), \qquad\forall h\in\Gl_7(\RR).
$$
Thus one has in $\lca$, all together, all the Lie groups endowed with left-invariant $G_2$-structures.  Note that two elements in $\lca$ are equivariantly equivalent as $G_2$-structures if and only if they belong to the same $G_2$-orbit, and that they are in the same $\Or(7)$-orbit if and only if they are equivariantly isometric as Riemannian metrics.  Both assertions hold without the word `equivariantly' for completely real solvable Lie brackets.

In this light, the following $G_2$-invariant algebraic subsets,
\begin{equation}\label{defL}
\lca_c := \left\{\mu\in\lca:d_\mu\vp=0\right\}, \qquad \lca_{erp} := \left\{\mu\in\lca_c:d_\mu\tau_\mu=\tfrac{1}{6}|\tau_\mu|^2\vp + \tfrac{1}{6}\ast(\tau_\mu\wedge\tau_\mu)\right\},
\end{equation}
parametrize the spaces of all closed (or calibrated) and all ERP $G_2$-structures on Lie groups, respectively.  Thus the quotient
$$
\lca_{erp}/G_2
$$
parametrizes the set of all ERP $G_2$-structures on Lie groups, up to equivariant equivalence.  Note that a given Lie group $G_\mu$ admits a closed (resp.\ ERP) $G_2$-structure if and only if the orbit $\Gl_7(\RR)\cdot\mu$ meets $\lca_c$ (resp.\ $\lca_{erp}$).

A $C^1$ curve $\mu:(-\epsilon,\epsilon)\longrightarrow\Lambda^2\ggo^*\otimes\ggo$ is said to be a {\it deformation} (of ERP $G_2$-structures) if $\mu(t)\in\lca_{erp}$ for all $t$.  Examples of deformations are given by $\mu(t)=h(t)\cdot\mu$, where $\mu\in\lca_{erp}$ and $h(t)\in G_2$, which are trivial in the sense that the family $\{\mu(t)\}$ is in such case pairwise equivariantly equivalent.  Given $\mu\in\lca_{erp}$, let ${\mathcal T}_\mu\lca_{erp}$ denote the set of all velocities $\mu'(0)$ such that $\mu(t)$ is a deformation with $\mu(0)=\mu$ (notice that ${\mathcal T}_\mu\lca_{erp}$ is not necessarily a vector space).  It follows that,
$$
\ggo_2\cdot\mu \subset {\mathcal T}_\mu\lca_{erp} \subset \overline{T}_\mu\lca_{erp},
$$
where $\ggo_2\cdot\mu$ coincides with the tangent space $T_\mu (G_2\cdot\mu)$ and $\overline{T}_\mu\lca_{erp}$ is the vector space determined by the linearization of both the Jacobi condition and the remaining equations defining $\lca_{erp}$ given in \eqref{defL}.

It is therefore natural to call a $\mu\in\lca_{erp}$ {\it equivariantly rigid} when
$$
\ggo_2\cdot\mu= \overline{T}_\mu\lca_{erp}.
$$
However, it is worth pointing out that according to Proposition \ref{equivA}, there might exist linear deformations of the form $\mu(t)=\mu+t\mu_{D}$, where $D$ is a suitable derivation of $\mu$.  Such deformations are also trivial as $\mu(t)$ is equivalent to $\mu$ for all $t$, though in general they are not equivariantly equivalent.  This shows that weaker notions of rigidity should also come into play.

In the case when $\hg=\spann\{ e_1,\dots,e_6\}$ is an ideal of $\mu\in\lca$, one has that $\mu=\lambda+\mu_A$ as in Section \ref{Gmu} and $\mu\leftrightarrow(G_\mu,\vp)$ is indeed the structure we have studied in Sections \ref{closed} and \ref{erp}.  It follows from Proposition \ref{closed-mu} that the $G_2$-orbit of any $\mu\in\lca_c$ meets the algebraic subset
$$
\lca_{c,\hg} := \left\{\mu\in\lca_c : \mu(\ggo,\hg)\subset\hg, \; \tr{\ad_\mu{e_i}}=0, \; i=1,\dots,6\right\},
$$
and that the equivariant equivalence between non-unimodular elements in $\lca_{c,\hg}$ is determined by the group $U_{\hg}$ given in \eqref{Uh}.  In the same vein, Proposition \ref{ERP-mu} asserts that any ERP $G_2$-structure $\mu\in\lca_{erp}$ is equivariantly equivalent to an element in
$$
\lca_{erp,\hg,\tau} := \left\{\mu\in\lca_{c,\hg} : \tau_\mu=\tau\right\},
$$
where $\tau:=e^{12}-e^{56}$.  In this case, the subgroups $U_{\hg,\tau}, U_{\ggo_1,\tau}\subset G_2$ computed in Section \ref{subg-sec} are the ones providing equivariant equivalence among $\lca_{erp,\hg,\tau}$ in the non-unimodular and unimodular cases, respectively (see Proposition \ref{equiv}).

This motivates the study of deformations within $\lca_{erp,\hg,\tau}$.  Analogously, for each $\mu\in\lca_{erp,\hg,\tau}$ one has that,
$$
\ug\cdot\mu = T_\mu (U\cdot\mu) \subset {\mathcal T}_\mu\lca_{erp,\hg,\tau} \subset \overline{T}_\mu\lca_{erp,\hg,\tau},
$$
where $\ug$, $U$ are either $\ug_{\hg,\tau}$, $U_{\hg,\tau}$ or $\ug_{\ggo_1,\tau}$, $U_{\ggo_1,\tau}$, depending on whether $\mu$ is non-unimodular or unimodular.   Here $\overline{T}_\mu\lca_{erp,\hg,\tau}$ is the linearization of the conditions defining $\lca_{erp,\hg,\tau}$ given in Proposition \ref{erp-iff}.  Thus $\mu\in\lca_{erp,\hg,\tau}$ is equivariantly rigid if and only if $\ug\cdot\mu=\overline{T}_\mu\lca_{erp,\hg,\tau}$.  Note that $\dim{\ug_{\hg,\tau}\cdot\mu}\leq 2$ and $\dim{\ug_{\ggo_1,\tau}\cdot\mu}\leq 4$ for any $\mu$.

According to the structural results proved in Section \ref{erp} (see Theorem \ref{erp-prop2} and Proposition \ref{erp-iff}), each $\mu\in\lca_{erp,\hg,\tau}$ only depends on one $2\times 2$ matrix $A_1$ and three $4\times 4$ matrices $A$, $B$ and $C$; in this way,
$$
\mu=\lambda_{B,C}+\mu_{\left[\begin{smallmatrix} A_1&0\\ 0&A\end{smallmatrix}\right]}.
$$
Thus any deformation $\mu(t)\in\lca_{erp,\hg,\tau}$ such that $\mu(0)=\mu$ and $\mu'(0)=\overline{\mu}$ has the following form:
$$
\mu=(A_1,A,B,C),  \quad \mu(t)=(A_1(t),A(t),B(t),C(t)),  \quad \overline{\mu} = (\overline{A}_1,\overline{A},\overline{B},\overline{C}),
$$
$$
A_1=\left[\begin{smallmatrix} a&b\\ c&d\end{smallmatrix}\right], \quad A_1(t)=\left[\begin{smallmatrix} a(t)&b(t)\\ c(t)&d(t)\end{smallmatrix}\right], \quad\overline{A}_1=\left[\begin{smallmatrix} \overline{a}&\overline{b}\\ \overline{c}&\overline{d}\end{smallmatrix}\right].
$$
It follows from Proposition \ref{erp-iff} that a vector $\overline{\mu}=(\overline{A}_1,\overline{A},\overline{B},\overline{C})$ belongs to $\overline{T}_\mu\lca_{erp,\hg,\tau}$ if and only if the following conditions hold:
\begin{align}
&\overline{A},\overline{B},\overline{C} \in \spg(\ggo_1,\tau), \label{tb1}\\
&[\overline{A},B] + [A,\overline{B}] = \overline{a}B + a\overline{B} + \overline{c}C + c\overline{C}, \label{tb2}\\
&[\overline{A},C] + [A,\overline{C}] = \overline{b}B + b\overline{B} + \overline{d}C + d\overline{C}, \label{tb3}\\
&[\overline{B},C] + [B,\overline{C}] = 0, \label{tb4}\\
&\theta(\overline{A}^t)\omega_7+\theta(\overline{B}^t)\omega_3+\theta(\overline{C}^t)\omega_4 = -(\overline{a}+\overline{d})\omega_7. \label{tb5}
\end{align}

We now describe the linear deformations mentioned above.  Given $\mu=(A_1,A,B,C)\in\lca_{erp,\hg,\tau}$, consider the vector space ${\mathfrak D}_\mu$ of all pairs $(D_1,D_2)\in\glg_2(\RR)\times\glg_4(\RR)$ such that
$$
[D_1,A_1]=0, \quad [D_2,A]=0, \quad [D_2,B]=rB+tC, \quad [D_2,C]=sB+uC, \quad D_1=\left[\begin{smallmatrix} r&s\\ t&u\end{smallmatrix}\right],
$$
that is,
$$
D:=\left[\begin{smallmatrix} D_1&0\\ 0&D_2\end{smallmatrix}\right]
$$
defines a derivation of $\mu$ vanishing at $e_7$ (see Remark \ref{equivA-rem}).  It follows from Proposition \ref{equivA} that each of these $(D_1,D_2)$ satisfying that $D\in\sug(3)$ determines a linear deformation of $\mu$ given by $\mu(t):=\mu+t\mu_D$, or equivalently,
$$
A_1(t)=A_1+tD_1, \qquad A(t)=A+tD_2, \qquad B(t)\equiv B, \qquad C(t)\equiv C,
$$
forming the vector space
$$
\dg_\mu:=\left\{\overline{\mu}=(D_1,D_2,0,0) : (D_1,D_2)\in{\mathfrak D}_\mu, \; D\in\sug(3) \right\} \subset {\mathcal T}_\mu\lca_{erp,\hg,\tau}.
$$
This suggests the following weaker version of rigidity: $\mu\in\lca_{erp,\hg,\tau}$ is said to be {\it rigid} if
$$
\dg_\mu + \ug\cdot\mu = \overline{T}_\mu\lca_{erp,\hg,\tau}.
$$

In the unimodular case, one always has that $\dg_\mu=0$ and $\ug_{\ggo_1,\tau}\cdot\mu$ is $4$-dimensional.  Indeed, any skew-symmetric derivation $D$ of $\mu=(0,A,B,C)$ must stabilize the nilradical $\ggo_1$ (see Proposition \ref{erp-unim}) and commute with the maximal abelian subalgebra $\spann\{ A,B,C\}\subset\sym(4)$ (see Corollary \ref{erp-unim-2}), so $D=0$.  This implies that a unimodular $\mu\in\lca_{erp,\hg,\tau}$ is equivariantly rigid, if and only if it is rigid, if and only if $\dim{\overline{T}_\mu\lca_{erp,\hg,\tau}}=4$.

In the non-unimodular case, ${\mathfrak D}_\mu=\Der(\mu)\cap\ggo_2$ and it is easy to see that $\dg_\mu\perp\ug_{\hg,\tau}\cdot\mu$.  Moreover, since $\Der(\mu)\cap\ug_{\hg,\tau}\subset\dg_\mu$, one always has that $\dim{(\dg_\mu + \ug_{\hg,\tau}\cdot\mu)}\geq 2$.

By solving the linear system \eqref{tb1}-\eqref{tb5} and computing the derivations belonging to $\ggo_2$ for all the examples given in Section \ref{cla-sec}, we obtain the following information:
\begin{itemize}
\item $\mu_J$ (Example \ref{J}): $\ug_{\ggo_1,\tau}\cdot\mu = \overline{T}_\mu\lca_{erp,\hg,\tau}$ ($4$-dimensional), $\dg_\mu=0$.

\item $\mu_{M2}$ (Example \ref{M2}): $\ug_{\hg,\tau}\cdot\mu = \overline{T}_\mu\lca_{erp,\hg,\tau}$ ($2$-dimensional), $\dg_\mu=0$.

\item $\mu_{M3}$ (Example \ref{M3}): $\ug_{\hg,\tau}\cdot\mu = \overline{T}_\mu\lca_{erp,\hg,\tau}$ ($2$-dimensional), $\dg_\mu=0$.

\item $\mu_B$ (Example \ref{bryant}): $\ug_{\hg,\tau}\cdot\mu = 0$, $\dg_\mu=\overline{T}_\mu\lca_{erp,\hg,\tau}$ ($2$-dimensional).

\item $\mu_{M1}$ (Example \ref{M1}): $\ug_{\hg,\tau}\cdot\mu = \overline{T}_\mu\lca_{erp,\hg,\tau}$ ($2$-dimensional), $\dg_\mu=0$.
\end{itemize}

It follows that they are all equivariantly rigid, except for Example \ref{bryant}, which is only rigid.


\begin{thebibliography}{MM}

\bibitem[A]{Alk} {\sc D. Alekseevskii}, Conjugacy of polar factorizations of Lie groups, {\it Mat. Sb.} {\bf 84} (1971), 14-26; {\it English translation}: {\it Math. USSR-Sb.} {\bf 13} (1971), 12-24.

\bibitem[AK]{AlkKml} {\sc D. Alekseevskii, B. Kimel'fel'd}, Structure of homogeneous Riemannian spaces with zero Ricci curvature, {\it Funktional Anal. i Prilozen} {\bf 9} (1975), 5-11 (English translation: {\it Functional Anal. Appl.} {\bf 9} (1975), 97-102).

\bibitem[B]{Bry} {\sc R. Bryant}, Some remarks on $G_2$-structures, Proc. G\"okova Geometry-Topology Conference (2005), 75-109.

\bibitem[CI]{ClyIvn} {\sc R. Cleyton, S. Ivanov}, Curvature decomposition of G2-manifolds. {\it J. Geom. Phys.} {\bf 58} (2008), 1429-1449.

\bibitem[D]{Dtt} {\sc I. Dotti}, Metrics with nonpositive Ricci curvature on semidirect products, {\it Quart. J. Math} {\bf 37} (1986),  309-314.

\bibitem[FR1] {FinRff3} {\sc A. Fino, A. Raffero}, Closed $G_2$-structures on non-solvable Lie groups, {\it Rev. Matem. Complutense}, in press.

\bibitem[FR2]{FinRff2} {\sc A. Fino, A. Raffero}, A class of eternal solutions to the $G_2$-Laplacian flow, preprint 2018 (arXiv).

\bibitem[FR3]{FinRff3} {\sc A. Fino, A. Raffero}, Remarks on homogeneous solitons of the $G_2$-Laplacian flow, preprint 2019 (arXiv).

\bibitem[H]{Hbr2} {\sc J. Heber}, Geometric and algebraic structure of noncompact homogeneous Einstein spaces,
Univ. Augsburg, 1997.

\bibitem[L1]{solvsolitons} {\sc J. Lauret}, Ricci soliton solvmanifolds, {\it J. reine angew. Math.} {\bf 650} (2011), 1-21.

\bibitem[L2]{LF}  {\sc J. Lauret}, Laplacian flow of homogeneous $G_2$-structures and its solitons, {\it Proc. London Math. Soc.} {\bf 114} (2017), 527-560.

\bibitem[L3]{LS-ERP}  {\sc J. Lauret}, Laplacian solitons: Questions and homogeneous examples, {\it Diff. Geom. Appl.} {\bf 54} (2017), 345-360.

\bibitem[L4]{Fields}  {\sc J. Lauret}, Distinguished $G_2$-structures on solvmanifolds, {\it Fields Institute Communications}, Springer, in press (arXiv).

\bibitem[Lo]{Lty} {\sc J. Lotay}, Geometric flows of $G_2$ structures, {\it Fields Institute Communications}, Springer, in press (arXiv).

\bibitem[M]{Mln} {\sc J. Milnor}, Curvature of Left-invariant Metrics on Lie Groups, {\it Adv. Math.} {\bf 21}(1976), 293-329.

\bibitem[N]{Ncl} {\sc M. Nicolini}, Laplacian solitons on nilpotent Lie groups, {Bull. Belgian Math. Soc.} {\bf 25} (2018), 183-196.

\bibitem[PR]{PdsRff} {\sc F. Podesta, A. Raffero}, On the automorphism group of a closed $G_2$-structure, {\it Quart. J. Math.}, in press.

\bibitem[VM]{VanMnr} {\sc H. Van Le, M. Munir}, Classification of compact homogeneous spaces with invariant $G_2$-structures, {\it Adv. Geom.} {\bf 12} (2012), 303-328.

\bibitem[W]{Wlf} {\sc J.A. Wolf}, Homogeneity and bounded isometrics in manifolds of negative curvature, {\it Illinois J. Math.} {\bf 8} (1964), 14-18.
\end{thebibliography}
\end{document}